\def \beq {\begin{equation}}
\def \eeq {\end{equation}}
\newtheorem{thm}{Th\'eor\`eme}[section]
\newtheorem{prop}[thm]{Proposition}
\newtheorem{lemme}[thm]{Lemme}
\newtheorem{cor}[thm]{Corollaire}
\newtheorem{question}[thm]{Question}
\newtheorem{hyp}[thm]{Hypoth\`ese}
\newtheorem{hyps}[thm]{Hypoth\`eses}
\theoremstyle{definition}
\newtheorem{defi}[thm]{D\'efinition}
\newtheorem{nota}[thm]{Notation}
\theoremstyle{remark}
\newtheorem{rem}[thm]{Remarque}
\newcommand{\termin}[1]{{\em #1}}
\newcommand{\bA}{\mathbf{A}}
\newcommand{\bC}{\mathbf{C}}
\newcommand{\bG}{\mathbf{G}}
\newcommand{\bN}{\mathbf{N}}
\newcommand{\bP}{\mathbf{P}}
\newcommand{\bR}{\mathbf{R}}
\newcommand{\bZ}{\mathbf{Z}}
\newcommand{\bd}{\boldsymbol{d}}
\newcommand{\be}{\boldsymbol{e}}
\newcommand{\bbf}{\boldsymbol{f}}
\newcommand{\bg}{\boldsymbol{g}}
\newcommand{\bs}{\boldsymbol{s}}
\newcommand{\bt}{\boldsymbol{t}}
\newcommand{\bx}{\boldsymbol{x}}
\newcommand{\by}{\boldsymbol{y}}
\newcommand{\balpha}{\boldsymbol{\alpha}}
\newcommand{\bbeta}{\boldsymbol{\beta}}
\newcommand{\bgamma}{\boldsymbol{\gamma}}
\newcommand{\bnu}{\boldsymbol{\nu}}
\newcommand{\bmu}{\boldsymbol{\mu}}
\newcommand{\cC}{{\mathcal C}}
\newcommand{\cI}{{\mathcal I}}
\newcommand{\cJ}{{\mathcal J}}
\newcommand{\cK}{{\mathcal K}}
\DeclareMathAlphabet{\eulercal}{U}{eus}{m}{n}
\newcommand{\ecC}{{\eulercal C}}
\newcommand{\ecD}{{\eulercal D}}
\newcommand{\ecE}{{\eulercal E}}
\newcommand{\ecF}{{\eulercal F}}
\newcommand{\ecG}{{\eulercal G}}
\newcommand{\ecH}{{\eulercal H}}
\newcommand{\ecN}{{\eulercal N}}
\newcommand{\ecO}{{\eulercal O}}
\newcommand{\ecP}{{\eulercal P}}
\newcommand{\ecZ}{{\eulercal Z}}
\DeclareMathAlphabet{\beulercal}{U}{eus}{b}{n}
\newcommand{\becD}{{\beulercal D}}
\newcommand{\becE}{{\beulercal E}}
\newcommand{\becF}{{\beulercal F}}
\newcommand{\becG}{{\beulercal G}}
\newcommand{\mfF}{\mathfrak{F}}
\newcommand{\mfI}{\mathfrak{I}}
\newcommand{\mfJ}{\mathfrak{J}}
\newcommand{\mfK}{\mathfrak{K}}
\newcommand{\scC}{\mathscr{C}}
\newcommand{\scD}{\mathscr{D}}
\newcommand{\scE}{\mathscr{E}}
\newcommand{\scF}{\mathscr{F}}
\newcommand{\scG}{\mathscr{G}}
\newcommand{\scK}{\mathscr{K}}
\newcommand{\scO}{\mathscr{O}}
\newcommand{\scP}{\mathscr{P}}
\newcommand{\scS}{\mathscr{S}}
\newcommand{\eq}{\Leftrightarrow}
\newcommand{\imply}{\Rightarrow}
\newcommand{\longto}{\longrightarrow}
\newcommand{\isom}{\overset{\sim}{\to}}
\newcommand{\longisom}{\overset{\sim}{\longto}}
\DeclareMathOperator{\Pic}{Pic}
\DeclareMathOperator{\Div}{Div}
\DeclareMathOperator{\rg}{rg}
\DeclareMathOperator{\Hom}{Hom}
\DeclareMathOperator{\Min}{Min}
\DeclareMathOperator{\Max}{Max}
\DeclareMathOperator{\Inf}{Inf}
\DeclareMathOperator{\Spec}{Spec}
\DeclareMathOperator{\ddiv}{div}
\DeclareMathOperator{\intrel}{intrel}
\let\leq\leqslant
\let\geq\geqslant
\newcommand{\sumu}[1]{\underset{#1}{\sum}}
\newcommand{\produ}[1]{\underset{#1}{\prod}}
\newcommand{\cupu}[1]{\underset{#1}{\cup}}
\newcommand{\capu}[1]{\underset{#1}{\cap}}
\newcommand{\bigcapu}[1]{\underset{#1}{\bigcap}}
\newcommand{\Minu}[1]{\underset{#1}{\Min}}
\newcommand{\Infu}[1]{\underset{#1}{\Inf}}
\newcommand{\degu}[1]{\underset{#1}{\deg}}
\newcommand{\eps}{\varepsilon}
\newcommand{\vide}{\varnothing}
\newcommand{\eqdef}{\overset{\text{{\tiny{d\'ef}}}}{=}}
\newcommand{\wt}{\widetilde}
\newcommand{\acc}[2]{\left\langle #1\, ,\,#2 \right\rangle} 
\newcommand{\norm}[1]{\left|\left| #1 \right|\right|} 
\newcommand{\abs}[1]{\left| #1 \right|} 
\newcommand{\inv}{\times} 
\newcommand{\ceff}{C_{\text{\textnormal{\tiny{eff}}}}}
\newcommand{\eg}{{\it e.g.}\ }
\newcommand{\struct}[1]{\scO_{#1}}
\newcommand{\card}[1]{\left|#1\right|}
\newcommand{\courbe}{\scC}
\newcommand{\gc}{g_{{}_{\courbe}}}
\newcommand{\hc}{h_{{}_{\courbe}}}
\newcommand{\diveffc}{\Div_{\text{\textnormal{eff}}}(\courbe)}
\newcommand{\OdeC}{\struct{\courbe}}
\newcommand{\TNS}{T_{\text{NS}}}
\DeclareMathOperator{\Cox}{Cox}
\DeclareMathOperator{\Cov}{Cov}
\DeclareMathOperator{\Rlv}{Rlv}
\newcommand{\ecOu}[1]{\underset{#1}{\ecO}}
\newcommand{\fonc}{\ecN}
\newcommand{\wfonc}{\wt{\ecN}}
\newcommand{\cone}[1]{\ecC\left(#1\right)}
\newcommand{\cprinc}{c_{\text{\textnormal{pr}}}} 
\newcommand{\Dtot}{\scD_{\text{tot}}}
\newcommand{\scan}[1]{s_{#1}} 
\newcommand{\idex}{\mathscr{I}_X} 
\newcommand{\idexh}{\mathscr{I}^\text{\textnormal{hom}}_{X}} 
\newcommand{\Zerr}[1]{Z_{\text{\textnormal{err}},#1}}
\newcommand{\Zpr}[1]{Z_{\text{\textnormal{princ}},#1}}
\newcommand{\Zprgen}{Z_{\text{\textnormal{princ}}}}
\newcommand{\Zprprgen}{Z^{\,0}_{\text{\textnormal{princ}}}}
\newcommand{\can}[1]{\scK_{#1}}
\newcommand{\indsec}{\mfI}
\newcommand{\jndsec}{\mfJ}
\newcommand{\kndsec}{\mfK}
\newcommand{\classadm}{\ecC}
\newcommand{\classadminc}{\classadm_{\text{inc}}}
\newcommand{\indfan}{\cI}
\newcommand{\jndfan}{\cJ}
\newcommand{\kndfan}{\cK}
\DeclareMathOperator{\fact}{fact}
\DeclareMathOperator{\dens}{dens}
\newcommand{\ie}{{\it i.e.}\ }
\newcommand{\cf}{{\it cf.}\  }
\title[La conjecture de Manin]{La conjecture de Manin g\'eom\'etrique pour une famille
de quadriques intrins\`eques}
\author{David Bourqui}
\address{I.R.M.A.R\\Campus de Beaulieu\\
35042 Rennes cedex \\ France}
\email{david.bourqui@univ-rennes1.fr}
\date{}
\begin{document}
\frontmatter
\begin{abstract}
Nous \'etablissons une version de la
  conjecture de Manin pour une famille de quadriques intrins\`eques, le corps de base \'etant un corps global de
  caract\'eristique positive. Nous expliquons \'egalement comment une
tr\`es l\'eg\`ere variante de la m\'ethode employ\'ee permet d'\'etablir cette m\^eme conjecture
pour une certaine surface de del Pezzo g\'en\'eralis\'ee.
\end{abstract}
\begin{altabstract}
We prove a version of Manin's conjecture 
for a certain family of intrinsic quadrics, the base
field being a global field of positive characteristic.
We also explain how a very slight variation of the method we use allows to 
establish the conjecture for a certain generalized del Pezzo surface.
\end{altabstract}
\subjclass{
11G50 14C20 14J45
}
\keywords{conjecture de Manin, fonction z\^eta des hauteurs, corps
  global de caract\'eristique positive, torseurs universels, anneaux de Cox}
\altkeywords{Manin's conjecture, height zeta function, global field of
positive characteristic, universal torsors, Cox rings}
\maketitle
Dans ce texte, nous \'etablissons une version de la
conjecture de Manin sur le comportement asymptotique du nombre de
courbes de degr\'e 
anticanonique born\'e 
pour une certaine famille de quadriques intrins\`eques, \ie de vari\'et\'es
dont l'anneau total de coordonn\'ees s'identifie \`a l'anneau de coordonn\'ees d'une
quadrique affine. Cette famille est construite \`a l'aide des r\'esultats
de \cite{BerHau:Cox} qui permettent de b\^atir des vari\'et\'es
d'anneaux totaux de coordonn\'ees fix\'es (on renvoie \'egalement \`a cette
r\'ef\'erence pour la justification du terme \og intrins\`eque\fg~;
soulignons qu'une quadrique intrins\`eque n'est pas en g\'en\'eral isomorphe
\`a une quadrique).
Le r\'esultat principal de cet article est le suivant 
(on se reportera au th\'eor\`eme \ref{thm:princ}
pour un \'enonc\'e plus pr\'ecis).
\begin{thm}
Soit $k$ un corps fini, $\courbe$ une $k$-courbe projective et lisse
et  $(X_n)_{n\geq 3}$ la famille de $k$-vari\'et\'es d\'efinie \`a la sous-section 
\ref{subsec:princ}. Alors pour tout $n\geq 3$ la conjecture de Manin sur le comportement
asymptotique du nombre de morphismes de $\courbe$ vers $X_n$ de degr\'e anticanonique born\'e vaut
pour $X_n$.
\end{thm}
Il est \`a noter qu'on a $\dim(X_n)=n-1$ et que $X_3$ est isomorphe au
plan projectif \'eclat\'e en trois points align\'es, trait\'e dans un
pr\'ec\'edent travail (\cf \cite{Bou:compt}). Nous expliquons \'egalement \`a la fin
de l'article comment une l\'eg\`ere variante de la m\'ethode employ\'ee permet
d'obtenir le r\'esultat suivant.
\begin{thm}\label{thm:dP6A2}
On conserve les notations de l'\'enonc\'e du th\'eor\`eme pr\'ec\'edent.
Soit $X$ la d\'esingularisation minimale de la surface de
del Pezzo singuli\`ere de degr\'e $6$  avec une
singularit\'e de type $A_2$. 
Alors la conjecture de Manin sur le comportement
asymptotique du nombre de morphismes de $\courbe$ vers $X$ de degr\'e anticanonique born\'e vaut
pour $X$.
\end{thm}
Le principe g\'en\'eral de la d\'emonstration suit celui de
\cite{Bou:compt}. L'id\'ee de d\'epart, d\^ue \`a Salberger (\cite{Sal:tammes})
et Peyre (\cite{Pey:duke}), consiste \`a exploiter l'existence d'un certain torseur au-dessus de la vari\'et\'e.  
Comme expliqu\'e dans \cite{Bou:compt}, ceci permet de r\'e\'ecrire la fonction z\^eta des hauteurs en termes des relations
d\'efinissant l'anneau de coordonn\'ees total et de s\'eries g\'en\'eratrices index\'ees par les points entiers du dual du c\^one effectif.
Afin de d\'egager terme dominant et termes non significatifs de la
fonction z\^eta des hauteurs, il s'agit alors de d\'ecomposer les s\'eries obtenues suivant les
\og bonnes \fg\ et \og mauvaises\fg\ r\'egions du c\^one
effectif dual.
Pour faciliter une adaptation future de la m\'ethode employ\'ee \`a d'autres vari\'et\'es ou
familles de vari\'et\'es, nous commen\c cons par d\'ecrire  la strat\'egie
suivie pour une famille 
plus g\'en\'erale que celle pour laquelle le r\'esultat sera finalement
d\'emontr\'e, d\'egageant au passage des hypoth\`eses suffisantes pour que la
m\'ethode aboutisse. La classe que nous \'etudions est consitu\'ee des
hypersurfaces intrins\`eques pour lesquelles la d\'ependance en les
param\`etres de l'\'equation de l'anneau de Cox est lin\'eaire
(\cf la remarque \ref{rem:param}, notamment pour le sens de \og
param\`etres\fg).

Une des am\'eliorations par rapport \`a
\cite{Bou:compt} est que les d\'ecompositions li\'ees au c\^one effectif
sont exprim\'ees de mani\`ere intrins\`eque. 
En particulier, bien que le c\^one effectif des vari\'et\'es consid\'er\'es
dans les \'enonc\'es ci-dessus soit 
simplicial, cette particularit\'e ne joue aucun r\^ole dans
notre d\'emonstration. Par contre, on verra clairement que la position dans le c\^one
effectif des diviseurs des sections globales utilis\'ees pour engendrer
l'anneau total de coordonn\'ees a une influence cruciale pour la mise en oeuvre de
la m\'ethode. On renvoie \`a la remarque \ref{rem:hyp} pour quelques commentaires sur la nature des
hypoth\`eses mises en jeu.

Pour conclure cette introduction, soulignons que, bien que notre
m\'ethode n'utilise absolument pas cette structure, les vari\'et\'es 
consid\'er\'ees dans les \'enonc\'es ci-dessus
sont des compactifications \'equivariantes de l'espace affine (\cf
la remarque \ref{rem:comp}). L'analogue de nos r\'esultats pour les corps de nombres
d\'ecoule donc d'un th\'eor\`eme plus g\'en\'eral de Chambert-Loir et
Tschinkel sur la validit\'e des conjectures de Manin pour les
compactifications \'equivariantes d'espaces affines d\'efinies sur un
corps de nombres (\cite{CLT:inv}). L'adaptation de leur m\'ethode
au cas d'un corps global de caract\'eristique positive
est probablement faisable mais reste \`a mettre en \oe uvre.

Nous d\'ecrivons \`a pr\'esent bri\`evement l'organisation de l'article.
La section \ref{sec:fzh} contient les rappels utiles.
La strat\'egie g\'en\'erale de d\'emonstration est d\'ecrite dans la section
\ref{sec:hyp:int}. Elle est synth\'etis\'ee par le th\'eor\`eme \ref{thm:synth}.
La construction de notre famille de quadriques intrins\`eques fait
l'objet du d\'ebut de la  section
\ref{sec:constr} et la v\'erification des hypoth\`eses ad hoc pour cette
famille occupe la fin de cette m\^eme section. 
Enfin la derni\`ere section explique l'adaptation de la m\'ethode
permettant d'obtenir le th\'eor\`eme \ref{thm:dP6A2}.

\section{Fonction z\^eta des hauteurs et rel\`evement
  au torseur universel}\label{sec:fzh}
Dans cette section, tout en fixant quelques notations, nous rappelons
le contexte de notre \'etude et le r\'esultat de rel\`evement de la fonction
z\^eta des hauteurs au torseur universel d\'emontr\'e dans \cite{Bou:compt}.
\subsection{Fonction z\^eta des hauteurs et conjecture de Manin g\'eom\'etrique}\label{subsec:fonctionzeta}
Soit $k$ un corps fini de cardinal $q$. 
Soit $\courbe$ une $k$-courbe projective, lisse et g\'eom\'etriquement
int\`egre, dont on note $\gc$ le genre et $\hc$ le nombre de classes
de diviseurs de degr\'e $0$. 
On note $\courbe^{(0)}$ l'ensemble des
points ferm\'es de $\courbe$. 
Pour $v\in \courbe^{(0)}$, on note
$\kappa_v$ le corps r\'esiduel et $q_v=q^{\,f_v}$ son cardinal.
On note $\diveffc$ le monoïde des diviseurs effectifs de $\courbe$.
Rappelons que si $\ecD$ est un diviseur de $\courbe$ la dimension $\ell(\ecD)\eqdef\dim(H^0(\courbe,\OdeC(\ecD)))$
est toujours major\'ee par $1+\deg(\ecD)$ et vaut $1-\gc+\deg(\ecD)$ si
on a $\deg(\ecD)\geq 2\,\gc-1$.
En particulier, on a pour tout $d\geq 0$ la majoration
\begin{equation}\label{eq:maj:D}
\card{\{\ecD\in \diveffc,\,\deg(\ecD)=d\}}\leq \frac{q^{\,1+d}\,\hc}{q-1}.
\end{equation}

Soit $X$ une $k$-vari\'et\'e projective, lisse 
et g\'eom\'etriquement int\`egre d\'efinie sur $k$. 
On suppose que son groupe de Picard g\'eom\'etrique est libre de rang fini
et d\'eploy\'e, \ie que l'action du groupe de Galois absolu est triviale.
On note $\can{X}$
la classe du faisceau canonique de $X$ dans le groupe de Picard. On
suppose qu'elle est situ\'ee \`a l'int\'erieur du c\^one effectif $\ceff(X)$ de $X$.

Pour $U$ ouvert de Zariski
non vide de $X$ assez petit et $n\geq 0$, on note $N(X,-\can{X},U,n)$
le nombre de $k$-morphismes $f\,:\,\courbe\to X$ dont l'image recontre
$U$ et de degr\'e anticanonique $n$. Si $U$ est assez petit,
$N(X,-\can{X},U,n)$ est fini pour tout $n$ et 
on peut d\'efinir la fonction z\^eta des
hauteurs anticanonique comme l'\'el\'ement de $\bZ[[t]]$ suivant~:
\begin{equation}
Z_{X,-\can{X},U}(t)\eqdef \sum_{n\geq 0}N(X,-\can{X},U,n)\,t^n.
\end{equation}
Voici une version de la conjecture de Manin dans ce cadre.
\begin{question}\label{ques:manin}
Soit $\delta\eqdef \Max \{d\in \bN_{>0},\, -\can{X}\in
d\,\Pic(X)\}$
et $\wt{Z}_{X,-\can{X},U}(t)$ la s\'erie telle que
$\wt{Z}_{X,-\can{X},U}(t^{\,\delta})=Z_{X,-\can{X},U}(t)$.
Est-il vrai que si $U$ est assez petit
la s\'erie $Z_{X,-\can{X},U}(t)$
a pour rayon de convergence  $q^{-\delta}$ et que 
pour un certain $\eps>0$ 
sa somme 
se prolonge en une
fonction m\'eromorphe sur le disque $\abs{t}<q^{-\delta+\eps}$ ayant un 
p\^ole d'ordre $\rg(\Pic(X))$ en $t=q^{-\delta}$, 
et des p\^oles d'ordre
au plus $\rg(\Pic(X))-1$ en tout autre point du cercle de
rayon $q^{-\delta}$, et v\'erifiant
\begin{equation}
\lim_{t\to q^{-\delta}}\left(t-q^{\,-\delta}\right)^{\rg(\Pic(X))}\,
\wt{Z}_{X,-\can{X},U}(t)
=\alpha(X)\,\gamma(X)
\end{equation}
o\`u
\begin{equation}
\alpha(X)
\eqdef
\lim_{t\to 1} \,\,\,\,(1-t)^{\rg(\Pic(X))}\!\!\!\!\!\!
\sum_{y\in \ceff(X)^{\vee}\cap \Pic(X)^{\vee}}\,t^{\,\acc{y}{-\can{X}}}
\end{equation}
et
\begin{equation}
\gamma(X)
\eqdef
\left(\frac{\hc\,q^{(1-\gc)}}{q-1}\right)^{\rg(\Pic(X))}\:
q^{(1-\gc)\,\dim(X)}\,
\prod_{v\in\courbe^{(0)}}
(1-q_v^{-1})^{\rg(\Pic(X))}\,\frac{\card{X(\kappa_{v})}}{q_v^{\,\dim(X)}}.
\end{equation}
\end{question}

\begin{defi}
\begin{enumerate}
\item
Soit $(a_n)\in \bC^{\bN}$ et $(b_n)\in \bR^{\bN}$. On
dit que la s\'erie $\sum a_nt^n$ est major\'ee par la s\'erie $\sum
b_n\,t^n$ si on a $\abs{a_n}\leq b_n$ pour tout $n$.
\item
Soit $(a_n)\in \bC^{\bN}$, $k\geq 1$ un entier et $\rho>0$ un r\'eel.
On dit que la s\'erie $\sum a_nt^n$ est $\rho$-contr\^ol\'ee \`a l'ordre $k$
si elle v\'erifie l'une des deux  conditions 
 suivantes (\'equivalentes d'apr\`es les estim\'ees de Cauchy)~:
\begin{enumerate}
\item
on a 
$
a_n=\ecOu{n\to +\infty}\left(n^{k-1}\,\rho^{-n}\right)
$~;
\item
la s\'erie $\sum a_n\,t^n$ est major\'ee par une s\'erie 
dont le rayon de convergence est sup\'erieur \`a $\rho$
et dont la somme se prolonge en une fonction m\'eromorphe
sur un disque de rayon strictement sup\'erieur  \`a $\rho$,
ayant des p\^oles d'ordre au plus $k$ sur le cercle de rayon $\rho$.
\end{enumerate}
\end{enumerate}
\end{defi}
\begin{lemme}\label{lm:control}
On conserve les hypoth\`eses et notations pr\'ec\'edentes.
Si $U$ est un ouvert de $X$ tel que la s\'erie
\begin{equation}
Z_{X,-\can{X},U}(t)-
\gamma(X)\!\!\!\!\!
\sum_{y\in \ceff(X)^{\vee}\cap \Pic(X)^{\vee}}\,(q\,t)^{\,\acc{y}{-\can{X}}}
\end{equation}
est $q^{-1}$-contr\^ol\'ee \`a l'ordre $\rg(\Pic(X))-1$,
alors la r\'eponse \`a la question \ref{ques:manin} est positive pour $U$.
\end{lemme}

\subsection{Anneau de Cox, inversion de M\"obius, et rel\`evement au  torseur universel}\label{subsec:Cox}
On conserve les notations et hypoth\`eses de la section pr\'ec\'edente.
On suppose en outre que l'anneau total de coordonn\'ees  (ou anneau de
Cox) de $X$ (\cf \eg \cite{Has:eq:ut:cox:rings}),
not\'e $\Cox(X)$, est de type fini.
Soit $\{u_i\}_{i\in \indsec }$ 
une famille finie de sections globales (non constantes) 
qui engendre $\Cox(X)$. 
Soit $\idex$ l'id\'eal $\Pic(X)$-homog\`ene noyau du morphisme naturel
$k[u_i]_{i\in \indsec }\to \Cox(X)$ et $\idexh$ l'ensemble de ses \'el\'ements homog\`enes.
Pour $i\in \indsec $, on note
$\scE_i$ le diviseur des z\'eros de $u_i$.
Soit $X_0$ l'ouvert de $X$ \'egal au compl\'ementaire de la r\'eunion des
diviseurs $\{\scE_i\}_{i\in \indsec }$.

Soit $\TNS(X)\eqdef \Hom(\Pic(X),\bG_m)$ le tore de N\'eron-Severi.
Soit $\hat{X}$ l'ouvert de $\Spec(\Cox(X))$ form\'e des points
semi-stables vis-\`a-vis de la $\TNS(X)$-lin\'earisation sur le fibr\'e trivial de
$\Spec(\Cox(X))$ induite par le choix d'une classe ample.
Le quotient g\'eom\'etrique de $\hat{X}$ par $\TNS(X)$ existe et s'identifie
naturellement \`a $X$. On montre en outre que $\hat{X}\to X$ repr\'esente
l'unique classe de torseurs universels au-dessus de $X$ (\cf \cite{Has:eq:ut:cox:rings,hukeel:mori}).

Une classe $\classadm$ de parties de $\indsec $ 
sera dite \termin{admissible} si on a
\begin{equation}
\hat{X}=\Spec(\Cox(X))\cap \left(\cupu{\jndsec \in \classadm } \prod_{i\in \jndsec }u_i\neq 0 \right).
\end{equation}
Par exemple, soit $\scD\in \Pic(X)$ une classe ample et 
$\classadm $ l'ensemble des parties $\jndsec $ de $\indsec$ telles qu'il existe une famille
$(\lambda_i)_{i\in \jndsec}$ de rationnels strictement positifs v\'erifiant
$
\scD=\sumu{i\in \jndsec }\lambda_i\,\scE_i.
$
Alors $\classadm $ est admissible.
\begin{rem}\label{rem:inter:0}
Pour tout $i\in \indsec $, l'image r\'eciproque du diviseur $\scE_i$ par
le morphisme quotient $\hat{X}\to X$ est $\hat{X}\cap \{u_i=0\}$.
On en d\'eduit que si une partie $\jndsec$ de $\indsec $ v\'erifie 
$\cap_{i\in  \indsec }\,\,\scE_i\neq \vide$, toute classe admissible contient une
partie $\kndsec$ telle que $\kndsec\cap \jndsec =\vide$. On en d\'eduit
m\^eme que la classe
\beq
\classadminc\eqdef\{\jndsec \subset \indsec,\quad \capu{i\notin \jndsec}\ecE_i\neq \vide\}
\eeq
est admissible.
\end{rem}
On a la g\'en\'eralisation classique suivante de la formule d'inversion de M\"obius
\begin{prop}\label{prop:mu}
Il existe une unique fonction $\mu_{X}\,:\,\diveffc^{\indsec }\longto \bC$
v\'erifiant
\begin{equation}
\forall \,\becD\in \diveffc^{\indsec },\quad
\sum_{0\leq \becE\leq \becD}
\mu_{X}(\becE)
=
\left\{
\begin{array}{cl}
1 &\text{si }\underset{\jndsec \in \classadminc }{\Inf}\left(\sum_{i\in \jndsec }\ecD_i\right)=0\\
0 &\text{sinon}
\end{array}
\right.
\end{equation}
Cette fonction v\'erifie en outre les propri\'et\'es suivantes~:
\begin{enumerate}
\item
 elle est multiplicative, c'est-\`a-dire que si $\becE$ et $\becE'$ v\'erifient 
\begin{equation}
\forall i\in \indsec,\quad \Inf(\ecE_i,\ecE'_i)=0
\end{equation}
alors on a
\begin{equation}
\mu_{X}(\becE+\becE')=\mu_{X}(\becE)\,\mu_{X}(\becE')\quad ;
\end{equation}
\item
pour tout $v\in\courbe^{(0)}$ et tout $\balpha\in  \bN^{\,\indsec}$,  
$\mu_{X}(\,(\alpha_{i}\,v)\,)$ ne d\'epend que de $\balpha$~; 
on note $\mu_{X}^0(\balpha)$ cette valeur. 
Pour tout $\balpha\in \bN^{\indsec}$, on a
\begin{equation}
\sum_{0\leq \bbeta\leq \balpha}\mu_{X}^0(\bbeta)
=
\left\{
\begin{array}{cl}
1 &\text{si }\capu{i,\,\,\alpha_i\neq 0}\scE_i\neq \vide\\
0 &\text{sinon}.
\end{array}
\right.
\end{equation}
En particulier, on a $\mu_{X}^0(\balpha)=0$ dans les cas suivants~:
\begin{enumerate}
\item
il existe $i\in \indsec $ tel que $\alpha_{i}\geq 2$~;
\item
$\balpha$ est non nul et l'intersection 
$\cap_{i,\,\,\alpha_i\neq 0}\,\scE_i$ est non vide~; 
ceci vaut en particulier si
on a 
$\sum_{i\in \indsec }\alpha_i=1$.
\end{enumerate}
\end{enumerate}
\end{prop}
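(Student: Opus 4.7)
The plan is to reduce everything to standard Möbius inversion on a free commutative monoid. First I would note that, because $\courbe$ is smooth, effective divisors decompose uniquely as non-negative combinations of closed points, so $\diveffc^{\indsec}$ identifies canonically with $\bN^{\indsec\times\courbe^{(0)}}$; on such a monoid the defining recurrence $\sum_{\becE\leq\becD}\mu_X(\becE)=f(\becD)$ (with $f(\becD)$ denoting the right-hand side) determines $\mu_X$ uniquely by the usual inductive procedure, settling existence and uniqueness.

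The crucial step is to observe that $f$ factorizes over the closed points of $\courbe$. Since the infimum of effective divisors is computed pointwise, the condition $\Inf_{\jndsec\in\classadminc}(\sum_{i\in\jndsec}\ecD_i)=0$ is equivalent to the existence, at each closed point $v$, of some $\jndsec\in\classadminc$ with $D_{i,v}=0$ for every $i\in\jndsec$, i.e.\ $\jndsec\subseteq\indsec\setminus\{i:D_{i,v}\neq 0\}$. The family $\classadminc$ is stable under enlargement of $\jndsec$ (the intersection $\cap_{i\notin\jndsec}\ecE_i$ only grows with $\jndsec$), so this condition rewrites as $\cap_{i:D_{i,v}\neq 0}\ecE_i\neq\vide$, a function of $\becD_v$ alone and the same at every~$v$. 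Hence $f(\becD)=\prod_v f_v(\becD_v)$ for a common local function $f_v$.

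I would then define $\mu_X^0:\bN^{\indsec}\to\bC$ as the Möbius inverse of this local function $f_v$; the resulting recurrence is exactly the one stated in part~(2). Since $\mu_X^0(0)=1$, the product $\prod_v \mu_X^0(\becD_v)$ has only finitely many non-trivial factors and satisfies the same recurrence as~$\mu_X$, so by uniqueness $\mu_X(\becD)=\prod_v \mu_X^0(\becD_v)$. Specialising to $\becD=(\alpha_i v)$ with a single active $v$ gives at once $\mu_X((\alpha_i v))=\mu_X^0(\balpha)$, and the product form makes the multiplicativity statement transparent on tuples with disjoint supports over~$\courbe$.

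For the vanishing assertions I would use the explicit formula
\begin{equation*}
\mu_X^0(\balpha)=\sum_{\bbeta\leq\balpha,\,\balpha-\bbeta\in\{0,1\}^{\indsec}}(-1)^{|\balpha-\bbeta|}f_v(\bbeta).
\end{equation*}
If some $\alpha_{i_0}\geq 2$, I would pair $\bbeta$ with $\bbeta-e_{i_0}$: both members have $\beta_{i_0}\geq 1$, hence the same support and the same value of~$f_v$, while the signs are opposite, so the sum telescopes to~$0$. Once this case is dispatched one may assume $\balpha\in\{0,1\}^{\indsec}$; if furthermore $\balpha\neq 0$ and $\cap_{i:\alpha_i\neq 0}\ecE_i\neq\vide$, every $\bbeta\leq\balpha$ has support $J\subseteq S:=\{i:\alpha_i\neq 0\}$ and $\cap_{i\in J}\ecE_i\supseteq\cap_{i\in S}\ecE_i\neq\vide$, whence $f_v(\bbeta)=1$ and $\mu_X^0(\balpha)=\sum_{J\subseteq S}(-1)^{|S|-|J|}=(1-1)^{|S|}=0$; the sub-case $\sum_i\alpha_i=1$ is absorbed since $\balpha=e_i$ gives $\cap_{j:\alpha_j\neq 0}\ecE_j=\ecE_i\neq\vide$. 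The only substantive point in the whole argument is the pointwise character of $\Inf$, which is what makes the inversion split across closed points of~$\courbe$; everything else is routine bookkeeping and inclusion-exclusion.
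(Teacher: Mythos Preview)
Your argument is correct. The paper itself does not prove this proposition: it is introduced with the phrase ``On a la g\'en\'eralisation classique suivante de la formule d'inversion de M\"obius'' and left without demonstration, the author treating it as standard. Your write-up therefore supplies strictly more than the paper does.

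The approach you take is exactly the expected one. The key observation---that the defining function $f$ on the right-hand side factorizes over closed points because $\Inf$ of effective divisors is computed coefficientwise and $\classadminc$ is upward-closed---is the only non-formal ingredient, and you isolate it cleanly. The remaining steps (Möbius inversion on the locally finite poset $\bN^{\indsec\times\courbe^{(0)}}$, the product formula $\mu_X(\becD)=\prod_v\mu_X^0(\becD_v)$ by uniqueness, the pairing argument for $\alpha_{i_0}\geq 2$, and the binomial identity for $\balpha\in\{0,1\}^{\indsec}$ with nonempty intersection) are all carried out correctly. One cosmetic remark: in your pairing argument for case~(a) it might be clearer to say that you pair the term indexed by $\bbeta$ with $\beta_{i_0}=\alpha_{i_0}$ against the term indexed by $\bbeta-e_{i_0}$, so that the bijection is visibly on the index set of the sum; as written, ``pair $\bbeta$ with $\bbeta-e_{i_0}$'' is fine once one checks that both lie in the summation range, which you do.
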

On \'ecrit \`a pr\'esent $\indsec =I\sqcup J$, o\`u $I$ est tel que les classes 
$\{\ecE_i\}_{i\in I}$ forment une base de $\Pic(X)$.
Pour $i\in I$ (respectivement $j\in J$), on notera $u_i=s_i$
(respectivement $u_j=t_j$)
et $\scE_i=\scF_i$ 
(respectivement $\scE_j=\scG_j$).
La notation $\becE\in \diveffc^{I\cup J}$ d\'esignera toujours un couple $(\becF,\becG)$
o\`u $\becF\in \diveffc^I$ et $\becG\in \diveffc^J$.
\'Ecrivons, pour $j\in J$,
\begin{equation}
\scG_j=\sum_{i\in I} a_{i,j}\,\scF_i,\quad
a_{i,j}\in \bZ.
\end{equation}
Pour $\ecD\in \diveffc$, on note $\scan{\ecD}$ la section canonique de $\OdeC(\ecD)$.
Pour $\becD\in \diveffc^I$ et $\becE\in \diveffc^{I\cup J}$,
on d\'esigne par $\fonc(\becD,\becE)$ le cardinal de l'ensemble
des \'el\'ements $(t_j)_{j\in J}$ {\em tous non nuls} du produit
\begin{equation}
\prod_{j\in J} 
H^0(\courbe,\struct{\courbe}
(-\ecG_j+\sum_{j}a_{i,j}\,(\ecD_j+\ecF_i)))
\end{equation}
v\'erifiant les relations
\begin{equation}
\forall F\in \idexh,\quad
F(\scan{\ecD_i}\,\scan{\ecF_i},\,t_j\,\scan{\ecG_j})=0.
\end{equation}
Des r\'esultats des sections  1.3, 1.4 et 1.6 de \cite{Bou:compt} d\'ecoule alors
la formule suivante, que l'on peut voir comme une formule de
rel\`evement de la fonction z\^eta des hauteurs au torseur universel~:
\begin{equation}\label{eq:form:relevement}
Z_{X,-\can{X},X_0}(t)
=
\sum_{\becE\in \diveffc^{I\cup J}}
\mu_X(\becE)
\sum_{
\substack{
y\in\Pic(X)^{\vee}\cap \ceff(X)^{\vee}
\\
\acc{y}{\scG_j}\geq \deg(\ecG_j),\quad j\in J
\\
\acc{y}{\scF_i}\geq \deg(\ecF_i),\quad i\in I
\\
\becD\in \diveffc^I\\
\deg(\ecD_i)=\acc{y}{\scF_i}-\deg(\ecF_i),\quad i\in I 
}
}
\fonc(\becD,\becE)
\,\,
t^{\acc{y}{-\can{X}}}.
\end{equation}
\begin{rem}\label{rem:param}
Du point de vue du comptage, le choix de la d\'ecomposition
$\indsec =I\sqcup J$ revient grosso modo \`a fixer les variables
$\{s_j\}_{i\in I}$, les variables $\{t_j\}_{j\in J}$ devenant des
param\`etres.
La situation que nous allons consid\'erer ci-dessous est celle o\`u
l'anneau de Cox n'a qu'une relation qui d\'epend en outre lin\'eairement
de ces param\`etres~; en un sens, il s'agit donc de la situation la plus
simple apr\`es celle des vari\'et\'es toriques (o\`u il n'y a pas de
relations, l'anneau de Cox \'etant polynomial).
\end{rem}

\section{Le cas de certaines hypersurfaces intrins\`eques}\label{sec:hyp:int}
On reprend les hypoth\`eses et notations 
de la section pr\'ec\'edente.
On suppose en outre qu'on a un isomorphisme
\begin{equation}
\Cox(X)\longisom k[(s_i)_{i\in I},(t_j)_{j\in J}]/F(s_i,t_j).
\end{equation}
o\`u $F$ est $\Pic(X)$-homog\`ene de degr\'e $\Dtot$ et est {\em lin\'eaire en les
  $t_j$},
i.e. s'\'ecrit
\begin{equation}
F=\sum_{j\in J} t_j\,\prod_{i\in I}s_i^{b_{i,j}},\quad b_{i,j}\in \bN.
\end{equation}
D'apr\`es \cite[proposition 8.5]{BerHau:Cox}, on a  la formule d'adjonction suivante.
\begin{lemme}
La classe du fibr\'e anticanonique est
\begin{equation}\label{eq:adj}
-\can{X}=\sum_{i\in I}\scF_i+\sum_{j\in  J}\scG_j-\Dtot.
\end{equation}
\end{lemme}

Dans cette section nous d\'ecrivons le sch\'ema d'une strat\'egie pour \'etablir que dans
ce cas la question \ref{ques:manin} a une r\'eponse positive pour $U=X_0$.
Plus pr\'ecisons nous montrons le r\'esultat suivant~:
\begin{thm}\label{thm:synth}
On conserve les notations et hypoth\`eses pr\'ec\'edentes.

On suppose que 
les hypoth\`eses 
\ref{hyp:conv:cEeps},
\ref{hyp:rel:term:princ},
\ref{hyp:zerr2:bis}
et \ref{hyp:pos}
d\'ecrites ci-dessous
sont satisfaites.

Alors la s\'erie
\begin{equation}
Z_{X,-\can{X},X_0}(t)-
\gamma(X)\!\!
\sum_{y\in \ceff(X)^{\vee}\cap \Pic(X)^{\vee}}\,(q\,t)^{\,\acc{y}{-\can{X}}}
\end{equation}
est $q^{-1}$-contr\^ol\'ee \`a l'ordre $\rg(\Pic(X))-1$,
\end{thm}
La d\'emonstration de ce th\'eor\`eme
occupe le reste de cette section.
\begin{rem}\label{rem:hyp}
Les hypoth\`eses que l'on va d\'egager sont de deux
natures diff\'erentes~: d'une part, on demande que les diviseurs des
z\'eros des sections $t_j$ soient \og suffisamment positifs\fg~; d'autres part,
qu'une certaine s\'erie g\'en\'eratrice de nature combinatoire naturellement 
associ\'ee \`a la relation d\'efinissant l'anneau de Cox ait de bonnes
propri\'et\'es analytiques, et que son \og terme dominant\fg\  soit reli\'e au
nombre de points de $X$. 

Les surfaces de del Pezzo g\'en\'eralis\'ees qui sont des hypersurfaces
intrins\`eques ont \'et\'e classifi\'ees par Derenthal dans
\cite{Der:sdp:ut:hyp}. 
Un certain nombre d'entre elles sont de la forme consid\'er\'ee ici. 
Il appara\^\i t malheureusement que parmi ces derni\`eres la seule qui satisfasse nos
hypoth\`eses de positivit\'e est le plan projectif \'eclat\'e en trois
points align\'es. La fin de cette article explique cependant comment une l\'eg\`ere variante de la
m\'ethode d\'ecrite dans cette section s'applique \`a un autre membre
de la liste de Derenthal, pour lequel la d\'ependance en les
param\`etres $\{t_j\}_{j\in J}$ est \og presque\fg lin\'eaire. Au prix d'un
certain nombre de  complications techniques, la d\'emarche devrait s'av\'erer fructueuse
pour deux autres membres de la liste. C'est l'objet d'un travail en cours.

Vis-\`a-vis de la m\'ethode d\'ecrite ci-dessous, le fait que les hypoth\`eses
de positivit\'e ne sont pas satisfaites signifie que l'on englobe dans
le terme d'erreur 
des termes correspondant \`a des r\'egions trop importantes du c\^one
effectif dual, 
dont on ne peut en fait plus garantir qu'ils ne contribuent pas au terme principal de la
fonction z\^eta des hauteurs. Un rel\^achement des hypoth\`eses de 
positivit\'e devrait passer par une am\'elioration significative du lemme
de comptage \ref{prop:compt}. 
Il serait sans doute   int\'eressant \`a cet \'egard de pouvoir \'etendre la validit\'e
de la m\'ethode au cas de la 
d\'esingularisation minimale de la surface de del
Pezzo singuli\`ere de degr\'e 5 avec singularit\'e de type $A_1$. Pour cette
quadrique intrins\`eque, l'\'equation de l'anneau de Cox est tr\`es
similaire au cas du plan projectif \'eclat\'e en trois points align\'es,
mais la configuration du c\^one effectif est radicalement diff\'erente.
Il est \`a noter que pour cet exemple (comme pour d'autres) nous avons pu v\'erifier
que les hypoth\`eses du second type \'etaient satisfaites (\cf la remarque \ref{rem:eff}).
Il serait int\'eressant de d\'egager une d\'emonstration conceptuelle de ce genre de
r\'esultat, notamment en ce qui concerne la v\'erification de l'hypoth\`ese \ref{hyp:rel:term:princ}. 
\end{rem}

\subsection{Quelques lemmes pr\'eliminaires}
\begin{lemme}\label{lm:estim}
Soit $n\geq 1$ un entier, et $\rho>0$ un r\'eel.
Soit $(a_{\bd})\in \bC^{\bN^n}$ et 
\begin{equation}
F(\bt)\eqdef\sum_{\bd\in \bN^n}a_{\bd}\,\prod_{1\leq i\leq n}t_i^{d_i}.
\end{equation}
On suppose que $F(\bt)$ converge absolument sur un polydisque de rayon
$(r,\dots,r)$ avec $r>\rho^{-1}$.
Soit $(b_{\bd})\in \bC^{\bN^n}$ d\'efinie par
\begin{equation}
\frac{F(\bt)}
{(1-\rho\,t_1)\dots(1-\rho\,t_n)}
=\sum_{\bd\in \bN^n}b_{\bd}\,t_1^{d_1}\dots t_n^{d_n}.
\end{equation}
Alors on a pour tout $\eps>0$ assez petit 
\begin{equation}
\forall \bd\in \bN^n,\quad
\abs{b_{\bd}-F(\rho^{-1},\dots,\rho^{-1})\,\rho^{\abs{\bd}}}
\leq 
\frac{\rho^{-\eps}\,\norm{F}_{\rho^{-1+\eps}}}{(1-\rho^{-\eps})^{n}}\sum_{1\leq i\leq n}
\rho^{(1-\eps)d_i+\sum_{j\neq i} d_j}
\end{equation}
o\`u $\norm{F}_{\eta}\eqdef \Max_{\abs{t_i}=\eta}\abs{F(\bt)}$.
\end{lemme}
\begin{proof}
D'apr\`es les estimations de Cauchy, on a pour $\eps>0$ assez petit
\begin{equation}\label{eq:in:cauchy}
\forall \bd\in \bN^n,\quad \abs{a_{\bd}}\leq \norm{F}_{\rho^{-1+\eps}}\,\rho^{(1-\eps)\abs{\bd}}.
\end{equation}
Par ailleurs un calcul imm\'ediat montre qu'on a 
\begin{multline}
F(t_1,\dots,t_k,\rho^{-1},\dots,\rho^{-1})
-F(t_1,\dots,t_{k-1},\rho^{-1},\dots,\rho^{-1})\\
=(\rho\,t_k-1)
\sum_{
\substack{
\bd\in \bN^k,\,\delta\in \bN,\\
(\delta_j)_{k+1\leq j\leq n}\in \bN^{n-k}
}}
a_{d_1,\dots,d_k+1+\delta,\delta_{k+1},\dots \delta_n}
\rho^{-(\delta+1)-\sumu{k+1\leq j\leq n}\delta_{j}}
\prod_{1\leq i\leq
  k}t_i^{d_i}.
\end{multline}
Si on pose 
\begin{equation}
\frac{F(t_1,\dots,t_k,\rho^{-1},\dots,\rho^{-1})
-F(t_1,\dots,t_{k-1},\rho^{-1},\dots,\rho^{-1})}
{\prod_{1\leq i\leq n}(1-\rho\,t_i)}
\eqdef G_k(\bt)\eqdef \sum_{\bd\in \bN^n}b_{k,\bd}\prod_{1\leq i\leq n} t_i^{d_i}
\end{equation}
on a donc pour tout $\bd\in \bN^n$ la relation
\begin{equation}
b_{k,\bd}=-\sum_{\substack{0\leq \delta_i\leq d_i,\,1\leq i\leq k-1\\
\delta\in \bN,\\
\delta_j\in \bN,\,
k+1\leq j\leq n
}}
a_{\delta_1,\dots,\delta_{k-1},d_k+1+\delta,\delta_{k+1},\dots \delta_n}
\rho^{-(\delta+1)-\sumu{k+1\leq j\leq n}\delta_{j}+\sumu{1\leq i\leq
    k-1}(d_i-\delta_i)+\sumu{k+1\leq j\leq n}d_j}.
\end{equation}
D'apr\`es \eqref{eq:in:cauchy}, on a pour $\eps>0$ assez petit la majoration
\begin{align}
\abs{b_{k,\bd}}&\leq \norm{F}_{\rho^{-1+\eps}}\,\rho^{-\eps+(1-\eps)\,d_k+\sumu{1\leq i\leq n,\,i\neq k}d_i}
\sum_{\substack{
0\leq \delta_i\leq d_i,\,1\leq i\leq k-1\\
\delta\in \bN,\\
\delta_j\in \bN,\,
k+1\leq j\leq n
}}
\rho^{-\eps(\delta_1+\dots+\delta_{k-1}+\delta+\delta_{k+1}+\dots+\delta_n)}
\\
&\leq 
\frac{\norm{F}_{\rho^{-1+\eps}}\,\rho^{-\eps+(1-\eps)\,d_k+\sum_{1\leq
      i\leq n,\,i\neq k}d_k}}
{(1-\rho^{-\eps})^{n}}.
\end{align}
Compte tenu de la relation
\begin{equation}
\frac{F(\bt)
-F(\rho^{-1},\dots,\rho^{-1})}
{\prod_{1\leq i\leq n}(1-\rho\,t_i)}
=\sum_{1\leq k\leq n} G_k(\bt)
\end{equation}
on obtient bien la majoration annonc\'ee.
\end{proof}
\begin{lemme}\label{lm:prelim}
Soit $N$ un $\bZ$-module libre de rang fini, $\ecC$ un c\^one polyedral
rationnel de dimension maximale de $N\otimes \bR$, $x_0$ un \'el\'ement de l'int\'erieur
de $\cC$ et $x_1$ un \'el\'ement \emph{non nul} de $\ecC$. 
\begin{enumerate}
\item
Pour tout r\'eel $\rho>1$ 
la s\'erie
\begin{equation}
\sum_{
y\in \ecC^{\vee}\cap N^{\vee}
}
\rho^{-\acc{y}{x_1}}\,t^{\acc{y}{x_0}}
\end{equation}
est $1$-contr\^ol\'ee \`a l'ordre $\dim(\ecC)-1$.
\item
Soit $(a_n)_{n\geq 0}$ et $(M_n)_{n\geq 0}$ deux suites de r\'eels positifs.
On suppose qu'il existe $\rho>1$ tel que la s\'erie
$
\sum a_n\,\rho^{M_n}
$
soit convergente.

Alors la s\'erie
\begin{equation}\label{eq:sum:x1}
\sum  
a_n 
\sum_{
\substack{
y\in \ecC^{\vee}\cap N^{\vee}
\\
\acc{y}{x_1}\leq M_n
}}
t^{\acc{y}{x_0}}
\end{equation}
est $1$-contr\^ol\'ee \`a l'ordre $\dim(\ecC)-1$.
\end{enumerate}
\end{lemme}
\begin{proof} Soit $\Delta$ un \'eventail r\'egulier de support
  $\ecC$. Pour tout c\^one $\delta$ de $\Delta$, soit $\{y_{\ell}\}_{\ell\in
    \delta(1)}$ les g\'en\'erateurs des rayons de $\delta$. 
Soit 
\begin{equation}
\delta(1)_{x_1}\eqdef\{\ell\in \delta(1),\quad \acc{y_{\ell}}{x_1}=0\}.
\end{equation}
Comme $x_1$ est non nul, le cardinal de ce dernier ensemble est major\'e
par $\dim(\ecC)-1$. 
Mais la s\'erie \eqref{eq:sum:x1} se r\'e\'ecrit
\begin{equation}
\sum_{\delta\in \Delta}
\left(
\prod_{
\substack{
\ell\in \delta(1)_{x_1} 
}}
\frac{t^{\acc{y_{\ell}}{x_0}}}{1-t^{\acc{y_{\ell}}{x_0}}}
\right)
\sum_{n} a_n
P_{\delta,n}(t)
\end{equation}
o\`u $P_{\delta,n}(t)$ vaut $1$ si $M_n=0$ et d\'esigne sinon le polyn\^ome
\begin{equation}
\sum_{
\substack{
(n_{\ell})\in \left(\bN_{>0}\right)^{\delta(1)\setminus \delta(1)_{x_1}}
\\
\sum n_{\ell}\,\acc{y_{\ell}}{x_1}\leq M_n
}}
t^{\,\,\sum n_{\ell} \acc{y_{\ell}}{x_0}}.
\end{equation}
Dans ce dernier cas, on a a alors pour tout $\rho\geq 0$
\begin{equation}
P_{\delta,n}(\rho)\leq M_n^{\dim(\ecC)}\,\rho^{\dim(\ecC)\,\Max(\acc{y_{\ell}}{x_0})\,M_n}.
\end{equation}
Ceci montre le deuxi\`eme point.
Une d\'ecomposition analogue permet de montrer le premier point.
\end{proof}
La proposition suivante \'etend les r\'esultats de la section 3.3 de \cite{Bou:compt}.
\begin{prop}\label{prop:compt}
Soit $n\geq 1$ un entier,
soient $\{\ecH_j,\ecH'_j\}_{1\leq j\leq n}$ 
et $\ecH$ des diviseurs de $\courbe$
tels que $\ecH$ et les $\ecH_j+\ecH'_j$ sont deux \`a deux lin\'eairement \'equivalents.
Pour $1\leq j \leq n$, soit $s_j$  une section globale non nulle de $\OdeC(\ecH_j)$.
On fixe des isomorphismes 
\begin{equation}
\OdeC(\ecH_j+\ecH'_j)\isom \OdeC(\ecH) \quad 1\leq j\leq n
\end{equation}
ce qui permet de d\'efinir l'application lin\'eaire
\begin{equation}
\varphi_{\bs}\,:\,\prod_{1\leq j \leq
  n}H^0(\courbe,\OdeC(\ecH'_j))\to H^0(\courbe,\OdeC(\ecH)) 
\end{equation}
qui \`a $(t_j)$ associe $\sum t_j\,s_j$.
On note $\Delta_{\bs}$ la dimension du noyau de $\varphi_{\bs}$.
\begin{enumerate}
\item
On a la majoration
\begin{equation}
\Delta_{\bs}\leq n-1+\left(1-\frac{1}{n}\right)\,\sum_{1\leq j\leq n}\deg(\ecH'_j).
\end{equation}
\item
On a l'une des deux majorations suivantes~:
\begin{equation}\label{eq:maj:bis:ker:3}
\Delta_{\bs}\leq 
n-1
+\degu{1\leq j \leq n}(\Inf[\ddiv(s_j)])
-\deg(\ecH)
+\sumu{1\leq j\leq n} \deg(\ecH'_j)
\end{equation}
ou 
\begin{equation}\label{eq:maj:ter:ker:4}
\Delta_{\bs}\leq n-2+\left(1-\frac{1}{n-1}\right)\sumu{1\leq j\leq n} \deg(\ecH'_j).
\end{equation}
\item
On suppose qu'on a
\begin{equation}\label{hyp:cor:clef:part:gen}
\forall 1\leq j\leq n-1,\quad \deg(\ecH'_j)+\deg(\ecH'_{j+1})\geq
\deg(\ecH)-
\degu{1\leq j \leq n}(\Inf[\ddiv(s_j)])
+2\,\gc-1.
\end{equation}
Alors on a
\begin{equation}
\Delta_{\bs}=(n-1)\,(1-\gc)
+\degu{1\leq j \leq n}(\Inf[\ddiv(s_j)])
-\deg(\ecH)
+\sumu{1\leq j\leq n} \deg(\ecH'_j).
\end{equation}
et l'image de $\varphi_{\bs}$ est constitu\'ee de l'ensemble des multiples de $\Inf(\ddiv(s_i))$.
\end{enumerate}
\end{prop}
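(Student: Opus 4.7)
Les trois \'enonc\'es s'appuient sur l'observation commune suivante~: pour chaque $k\in\{1,\dots,n\}$, la projection $\pi_k\colon \ker\varphi_{\bs}\to \prod_{j\neq k}H^0(\OdeC(\ecH'_j))$ est injective, car $s_k\neq 0$ force $t_k=0$ lorsque toutes les autres composantes d'un \'el\'ement du noyau sont nulles. Pour le point (1), on combine cette injectivit\'e avec la majoration triviale $\dim H^0(\OdeC(L))\leq 1+\deg(L)$, ce qui donne $\Delta_{\bs}\leq (n-1)+\sum_{j\neq k}\deg(\ecH'_j)$ pour chaque $k$~; la majoration annonc\'ee d\'ecoule alors d'une moyenne sur $k$.

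Pour le point (2), on pose $D\eqdef \Inf_{1\leq j\leq n}[\ddiv(s_j)]$, si bien que l'image de $\varphi_{\bs}$ est contenue dans $H^0(\OdeC(\ecH-D))$. Je distingue deux cas suivant la dimension $\iota$ de cette image. Si $\iota\geq 1+\deg\ecH-\deg D$, la relation $\Delta_{\bs}=\sum_j\dim H^0(\OdeC(\ecH'_j))-\iota$ combin\'ee avec $\dim H^0\leq 1+\deg$ fournit directement \eqref{eq:maj:bis:ker:3}. Sinon, la faible dimension de l'image traduit une d\'eg\'en\'erescence des $s_j$ (l'infimum d\'efinissant $D$ \'etant atteint en plusieurs indices simultan\'ement), ce qui permet d'omettre une section bien choisie sans perdre substantiellement en dimension d'image~; en appliquant alors la m\'ethode de (1) \`a la famille restante de $n-1$ sections, on obtient \eqref{eq:maj:ter:ker:4}.

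Pour le point (3), l'hypoth\`ese \eqref{hyp:cor:clef:part:gen} entra\^\i ne, pour tout $j\in\{1,\dots,n\}$, les minorations $\deg\ecH'_j\geq 2\gc-1$ et $\deg(\ecH-D)\geq 2\gc-1$ (la premi\`ere provient de ce que l'autre membre de chaque paire cons\'ecutive est major\'e par $\deg\ecH-\deg D$, $\ddiv(s_{j'})$ \'etant effectif et sup\'erieur \`a $D$). Par Riemann-Roch, $\dim H^0(\OdeC(\ecH'_j))=1-\gc+\deg\ecH'_j$ et $\dim H^0(\OdeC(\ecH-D))=1-\gc+\deg(\ecH-D)$~; la formule annonc\'ee r\'esulte directement de $\Delta_{\bs}=\sum_j\dim H^0(\OdeC(\ecH'_j))-\dim H^0(\OdeC(\ecH-D))$ pour peu que l'image de $\varphi_{\bs}$ soit pr\'ecis\'ement $H^0(\OdeC(\ecH-D))$, et la caract\'erisation de l'image comme ensemble des multiples de $D$ en r\'esulte.

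Je prouve cette \'egalit\'e sur l'image par r\'ecurrence sur $n$. Pour $n=2$, apr\`es factorisation $s_j=s_D\,s'_j$ o\`u $s_D\in H^0(\OdeC(D))$ est de diviseur $D$, $\ker\varphi_{\bs}$ s'identifie \`a $H^0(\OdeC(\ecH'_1+\ecH'_2-\ecH+D))$ via $u\mapsto(-s'_2 u,s'_1 u)$ et l'\'egalit\'e se v\'erifie directement. Pour $n\geq 3$, posons $\bs''=(s_1,\dots,s_{n-1})$ et $D''=\Inf_{j\leq n-1}[\ddiv(s_j)]\geq D$~: l'hypoth\`ese pour $\bs''$ d\'ecoule de celle pour $\bs$ (via $\deg D\leq \deg D''$), et la r\'ecurrence donne $\mathrm{image}(\varphi_{\bs''})=H^0(\OdeC(\ecH-D''))$. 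L'image de $\varphi_{\bs}$ vaut alors $H^0(\OdeC(\ecH-D''))+s_n\cdot H^0(\OdeC(\ecH'_n))$, et son \'egalit\'e avec $H^0(\OdeC(\ecH-D))$ s'obtient par interpolation~: pour $\sigma\in H^0(\OdeC(\ecH-D))$, trouver $\tau\in H^0(\OdeC(\ecH'_n))$ tel que $\sigma-s_n\tau$ s'annule sur $D''$ revient \`a $\deg(D''-D)$ conditions lin\'eaires sur $\tau$ aux points o\`u $D''>D$. Leur r\'esolubilit\'e \'equivaut, par dualit\'e de Serre, \`a $\deg\ecH'_n-\deg D''+\deg D\geq 2\gc-1$~; l'in\'egalit\'e $\deg D''\leq\deg\ddiv(s_{n-1})=\deg\ecH-\deg\ecH'_{n-1}$ permet de r\'e\'ecrire cette condition comme $\deg\ecH'_{n-1}+\deg\ecH'_n\geq \deg\ecH-\deg D+2\gc-1$, qui est exactement l'hypoth\`ese \eqref{hyp:cor:clef:part:gen} pour $j=n-1$. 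L'obstacle principal est la v\'erification de l'\'etape d'interpolation (et la justification propre de la dichotomie du point (2)), mais l'argument ci-dessus boucle la r\'ecurrence.
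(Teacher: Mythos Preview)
Vos arguments pour les points (1) et (3) sont essentiellement corrects et reproduisent la d\'emarche du papier (qui, pour (3), renvoie simplement \`a \cite[corollaire 3.6]{Bou:compt}~; votre r\'ecurrence en est une reformulation valide, l'\'etape d'interpolation se justifiant bien via l'annulation de $H^1(\OdeC(\ecH'_n-(D''-D)))$ d\`es que $\deg\ecH'_n-\deg D''+\deg D\geq 2\gc-1$).

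En revanche, votre dichotomie pour le point (2) est d\'efaillante, et c'est bien l\`a que se situe le vrai travail. Vous affirmez que si $\iota<1+\deg\ecH-\deg D$, la \og d\'eg\'en\'erescence des $s_j$\fg\ permet d'omettre une section et d'aboutir \`a \eqref{eq:maj:ter:ker:4}. C'est faux en g\'en\'eral. Prenez $n=2$, $\courbe$ de genre $1$, $\ecH=2P$, $\ecH_1=\ecH_2=0$ et $s_1=s_2=1$ (section constante du fibr\'e trivial)~: alors $D=0$, $\ecH'_1=\ecH'_2=2P$, l'image de $\varphi_{\bs}$ est $H^0(\OdeC(2P))$ tout entier, de dimension $2<3=1+\deg\ecH-\deg D$, mais $\Delta_{\bs}=2$ alors que \eqref{eq:maj:ter:ker:4} exigerait $\Delta_{\bs}\leq 0$. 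C'est en fait \eqref{eq:maj:bis:ker:3} qui vaut ici ($2\leq 3$), mais votre crit\`ere l'envoie dans la mauvaise branche. Plus g\'en\'eralement, une image de petite dimension peut simplement refl\'eter $\ell(\ecH-D)<1+\deg(\ecH-D)$ (\ie $\deg(\ecH-D)\leq 2\gc-2$), sans aucune d\'eg\'en\'erescence particuli\`ere des $s_j$~; et m\^eme en pr\'esence d'une telle d\'eg\'en\'erescence, \og omettre une section\fg\ donne au mieux $\Delta_{\bs}\leq \Delta_{(s_j)_{j\neq k}}+\ell(\ecH'_k)$, ce qui combin\'e avec (1) pour $n-1$ sections est \emph{plus faible} que (1) pour $n$ sections.

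La d\'emarche du papier est diff\'erente~: elle distingue les cas selon le signe des quantit\'es $\deg(\ecH'_{j_1}+\ecH'_{j_2}-\ecH+\Inf[\ddiv(s_{j_1}),\ddiv(s_{j_2})])$, en s'appuyant sur \cite[lemme 3.5]{Bou:compt} qui relie ce signe \`a l'injectivit\'e de $\varphi_{(s_{j_1},s_{j_2})}$. Si toutes ces quantit\'es sont strictement n\'egatives, toutes les restrictions \`a deux sections sont injectives et une moyenne sur les paires donne mieux que \eqref{eq:maj:ter:ker:4}. Sinon, on construit it\'erativement une cha\^\i ne d'indices le long de laquelle des in\'egalit\'es de degr\'e ad\'equates tiennent~; si cette cha\^\i ne atteint la longueur $n$ on obtient \eqref{eq:maj:bis:ker:3}, et si elle s'arr\^ete \`a une longueur $k<n$, une combinaison de la borne le long de la cha\^\i ne et d'une moyenne sur les indices restants donne \eqref{eq:maj:ter:ker:4}. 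La dichotomie correcte porte donc sur des conditions de degr\'e v\'erifiables \emph{a priori}, pas sur la dimension effective de l'image.
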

\begin{proof}
Commen\c cons par la remarque \'el\'ementaire
suivante~: pour toute partie $K$ de $\{1,\dots,n\}$, on a 
\begin{equation}
\Delta_{\bs}
\leq \Delta_{(s_j)_{j\in K}}
+\sum_{j\notin K} \ell(\ecH'_j)
\leq \Delta_{(s_j)_{j\in K}}
+n-\card{K}+\sum_{j\notin K} \deg(\ecH'_j)
\end{equation}
Ceci entra\^\i ne d\'ej\`a pour tout  $j_0\in \{1,\dots,n\}$
la majoration
\begin{equation}
\Delta_{\bs}\leq n-1+\sum_{j\neq j_0}\deg(\ecH'_j).
\end{equation}
En moyennant sur tous les $j_0$, on obtient le premier point.

Montrons le deuxi\`eme point.
Supposons que pour une certaine permutation des indices on ait
\begin{equation}
\deg(\ecH'_{1}+\ecH'_{2}-\ecH+\Inf[\ddiv(s_{1}),\ddiv(s_{2})])\geq 0
\end{equation}
et
\begin{equation}
\forall k\geq 3,\quad
\deg\left(\ecH'_{k}-\Inf[\ddiv(s_j)]_{1\leq j\leq k-1}
+\Inf[\ddiv(s_j)]_{1\leq j\leq k}]
\right)\geq 0.
\end{equation}
D'apr\`es le point 2. du lemme 3.5 de \cite{Bou:compt}, 
on a la majoration 
\begin{equation}
\Delta_{(s_1,s_2)}
\leq 1+\deg(\ecH'_{1})+\deg(\ecH'_{2})-\deg(\ecH)+\deg(\Inf[\ddiv(s_{1}),\ddiv(s_{2})]).
\end{equation}
Par ailleurs pour tout $k\geq 3$, on a
\begin{align}
\Delta_{(s_1,s_2,\dots,s_k)}\hskip-0.2\textwidth&
\\
&\leq
\Delta_{(s_1,\dots,s_{k-1})}+\ell\left(\ecH'_{k}-\Inf[\ddiv(s_j)]_{1\leq
  j\leq k-1}+\Inf[\ddiv(s_j)]_{1\leq j\leq k}\right)
\\
&\leq \Delta_{(s_1,\dots,s_{k-1})}+1+\deg(\ecH'_k)-
\deg(\Inf[\ddiv(s_j)]_{1\leq  j\leq k-1})+
\deg(\Inf[\ddiv(s_j)]_{1\leq  j\leq k})
\end{align}
De proche en proche, on obtient la majoration
\eqref{eq:maj:bis:ker:3}.

Supposons \`a pr\'esent que tous les couples
$(j_1,j_2)\in \{1,\dots,n\}^2$ 
v\'erifient
\begin{equation}
\deg(\ecH'_{j_1}+\ecH'_{j_2}-\ecH+\Inf[\ddiv(s_{j_1}),\ddiv(s_{j_2})])<0
\end{equation}
D'apr\`es le point 1. du lemme 3.5 de \cite{Bou:compt},
$\varphi_{s_{j_1},s_{j_2}}$ est injective. 
On a donc 
\begin{equation}
\Delta_{\bs}
\leq
n-2+\sum_{j\notin \{j_1,j_2\}}\deg(\ecH'_{j}).
\end{equation}
En moyennant sur tous les couples $(j_1,j_2)$, on obtient la
majoration 
\begin{equation}\label{eq:maj:ker:j1j2}
\Delta_{\bs}\leq n-2+\left(1-\frac{2}{n}\right)\sumu{1\leq j\leq n} \deg(\ecH'_j),
\end{equation}
en particulier \eqref{eq:maj:ter:ker:4} est v\'erifi\'ee.

Supposons enfin qu'il existe $n-1\geq k\geq 2$ et une
permutation des indices telle qu'on ait
\begin{equation}
\deg(\ecH'_{1}+\ecH'_{2}-\ecH+\Inf[\ddiv(s_{1}),\ddiv(s_{2})])\geq 0
\end{equation}
\begin{equation}
\forall 3\leq j\leq k,\quad
\deg\left(\ecH'_{k}-\Inf[\ddiv(s_{j})_{1\leq j\leq k-1}
+\Inf[\ddiv(s_j)]_{1\leq j\leq k}]
\right)\geq 0
\end{equation}
\begin{equation}
\forall j_0\geq k+1,\quad
\deg\left(\ecH'_{j_0}-\Inf[\ddiv(s_1,\dots,s_k)]
+\Inf[\ddiv(s_1,\dots,s_k,s_{j_0})]
\right)<0
\end{equation}

Alors, pour tout $j_0\geq k+1$, on a
$\Delta_{(s_1,s_2,\dots,s_k,s_{j_0})}=\Delta_{(s_1,\dots,s_k)}$
et $\Delta_{\bs}$ est major\'e par 
\begin{equation}
\Delta_{(s_1,\dots,s_k)}+(n-k-1)+\sum_{j\geq k+1,\,\, j\neq j_0}\deg(\ecH'_j)
\end{equation}
soit d'apr\`es ce qui pr\'ec\`ede par
\begin{equation}
k-1+\sum_{1\leq j\leq
  k}\deg(\ecH'_{j})-\deg(\ecH)+\deg(\Infu{1\leq j\leq
  k}[\ddiv(s_{j})]
)+(n-k-1)+\sum_{j\geq k+1,\,\,j\neq j_0}\deg(\ecH'_j).
\end{equation}
En utilisant la majoration
\begin{equation}
\deg(\Inf[\ddiv(s_{1}),\ddiv(s_{2}),\dots,\ddiv(s_{k})])\leq
\frac{1}{k}\sum_{1\leq j\leq k}\deg(\ecH_j)
\end{equation}
on aboutit finalement \`a la majoration
\begin{equation}
\Delta_{\bs}\leq n-2+\left(1-\frac{1}{k}\right)\,\sum_{1\leq j\leq
  k}\deg(\ecH'_{j})+\sum_{j\geq k+1,\,\,j\neq j_0}\deg(\ecH'_j)
\end{equation}
En moyennant sur tous les $j_0\geq k+1$, on obtient la majoration
\begin{equation}
\Delta_{\bs}\leq 
n-2+\left(1-\frac{1}{k}\right)\,\sumu{1\leq j\leq k}\deg(\ecH'_j)+\left(1-\frac{1}{n-k}\right)
\sumu{k+1\leq
    j\leq n} \deg(\ecH'_j).
\end{equation}
Ainsi \eqref{eq:maj:ter:ker:4} est encore v\'erifi\'ee dans ce cas.
Ceci ach\`eve la d\'emonstration du deuxi\`eme point.

Le dernier point est une g\'en\'eralisation imm\'ediate (par r\'ecurrence) du point 3 de
\cite[corollaire 3.6]{Bou:compt}.
\end{proof}

\subsection{D\'ecomposition de la fonction z\^eta des hauteurs}
Pour $K\subset J$, $\becD\in \diveffc^I$ et $\becE\in \diveffc^{I\cup J}$,
on d\'esigne par $\fonc_K(\becD,\becE)$ le cardinal de l'ensemble
\begin{equation}
\{(t_j)_{j\in J}\in\prod_{j\in J} 
H^0(\courbe,\OdeC(-\ecG_j+\sum_{i} a_{i,j}\,(\ecD_i+\ecF_i)))
\end{equation}
v\'erifiant
\begin{equation}
\forall j\notin K,\quad t_j=0
\end{equation}
et
\begin{equation}
\sum_{j\in J}t_j\,\scan{\ecG_j}\prod_i (\scan{\ecD_i}\,\scan{\ecF_i})^{b_{i,j}}=0,
\end{equation}
et on pose 
\begin{equation}
Z_K(t)
\eqdef
\sum_{\becE\in \diveffc^{I\cup J}}
\mu_X(\becE)
\sum_{
\substack{
y\in\Pic(X)^{\vee}\cap \ceff(X)^{\vee}
\\
\acc{y}{\scG_j}\geq \deg(\ecG_j),\quad j\in J
\\
\acc{y}{\scF_i}\geq \deg(\ecF_i),\quad i\in I
\\
\becD\in \diveffc^I\\
\deg(\ecD_i)=\acc{y}{\scF_i}-\deg(\ecF_i),\quad i\in I 
}
}
\fonc_K(\becD,\becE)
\,\,
t^{\acc{y}{-\can{X}}}.
\end{equation}
On a donc d'apr\`es \eqref{eq:form:relevement} la relation
\begin{equation}
Z_{X,-\can{X},X_0}(t)
=
\sum_{K\subset J}
(-1)^{\card{J}-\card{K}} Z_K(t).
\end{equation}

\begin{prop}\label{prop:maj:fonc:j}
Soit $\becE\in \diveffc^{I\cup J}$, 
 $\becD\in \diveffc^I$ et 
$y\in\Pic(X)^{\vee}\cap \ceff(X)^{\vee}$
tels que pour $i\in I$ on ait $\deg(\ecD_i)=\acc{y}{\scF_i}-\deg(\ecF_i)$.
\begin{enumerate}
\item\label{item:prop:maj:fonc:j0}
Pour tout $j_0\in J$, on  a
\begin{equation}\label{eq:maj:fonc:J:j0}
\log_q \fonc_{J\setminus \{j_0\}}(\becD,\becE)\leq 1+\left(1-\frac{1}{\card{J}-1}\right)\sum_{j\neq j_0}(\acc{y}{\scG_j}-\deg(\ecG_j))
\end{equation}
\item\label{item:prop:maj:fonc:j}
La quantit\'e $\log_q \fonc_{J}(\becD,\becE)$ est major\'ee soit par
\begin{equation}
\card{J}-1+\acc{y}{-\Dtot +\sum_{j\in J}\scG_j}-\sum_{j\in J} \deg(\ecG_j)
+
\deg\left(\Infu{j\in J}(\sumu{i\in I} b_{i,j} (\ecF_i+\ecD_i)+\ecG_j)\right)
\end{equation}
soit par
\begin{equation}
1+\left(1-\frac{1}{\card{J}-1}\right)\sum_{j\in J}(\acc{y}{\scG_j}-\deg(\ecG_j)).
\end{equation}
\item\label{item:prop:maj:eq:j}
On suppose que pour une certaine num\'erotation de $J$, on a 
\begin{equation}
\forall 1\leq j\leq \card{J}-1,\quad
\acc{y}{\scG_j+\scG_{j+1}-\Dtot}
\geq 
\deg(\ecG_j)+\deg(\ecG_{j+1})
+2\,\gc-1.
\end{equation}
Alors on a 
\begin{multline}
\log_q\fonc_J(\becD,\becE)=
\\
(\card{J}-1)\,(1-\gc)+\acc{y}{-\Dtot
+\sum_{j\in J}
  \scG_j}-\sum_{j\in J} \deg(\ecG_j)
+\deg\left(\Infu{j\in J}(\sumu{i\in I} b_{i,j} (\ecF_i+\ecD_i)+\ecG_j)\right).
\end{multline}
\end{enumerate}
\end{prop}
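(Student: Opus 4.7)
Le plan est de r\'e\'ecrire $\fonc_K(\becD,\becE)$ comme le cardinal du noyau d'une application lin\'eaire du type \'etudi\'e \`a la proposition \ref{prop:compt}, puis d'invoquer successivement les trois parties de cette proposition pour obtenir les trois points.

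Nous poserons $s_j\eqdef \scan{\ecG_j}\,\prod_{i\in I}(\scan{\ecD_i}\,\scan{\ecF_i})^{b_{i,j}}$ pour $j\in J$, qui est une section globale non nulle du fibr\'e $\OdeC(\ecH_j)$ avec $\ecH_j\eqdef \ecG_j+\sum_{i\in I}b_{i,j}(\ecD_i+\ecF_i)$. L'\'egalit\'e $\Dtot=\scG_j+\sum_i b_{i,j}\,\scF_i$ dans $\Pic(X)$, combin\'ee \`a $\scG_j=\sum_i a_{i,j}\,\scF_i$ et au fait que $(\scF_i)_{i\in I}$ forme une base, entra\^\i ne que la quantit\'e $d_i\eqdef a_{i,j}+b_{i,j}$ ne d\'epend pas de $j$. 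En posant $\ecH\eqdef \sum_i d_i\,(\ecD_i+\ecF_i)$ et $\ecH'_j\eqdef -\ecG_j+\sum_i a_{i,j}(\ecD_i+\ecF_i)$, on aura $\ecH_j+\ecH'_j=\ecH$ pour tout $j$~; ainsi, $\fonc_K(\becD,\becE)$ s'identifiera \`a $q^{\Delta_{\bs_K}}$, o\`u $\Delta_{\bs_K}$ d\'esigne la dimension du noyau de l'application $\varphi_{\bs_K}$ de la proposition \ref{prop:compt} appliqu\'ee \`a la famille $\bs_K\eqdef (s_j)_{j\in K}$.

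La contrainte $\deg(\ecD_i)+\deg(\ecF_i)=\acc{y}{\scF_i}$ fournit alors sans difficult\'e les \'egalit\'es
\[
\deg(\ecH'_j)=\acc{y}{\scG_j}-\deg(\ecG_j),\qquad \deg(\ecH)=\acc{y}{\Dtot},
\]
ainsi que l'identification $\Infu{j\in J}\ddiv(s_j)=\Infu{j\in J}(\ecG_j+\sum_{i\in I}b_{i,j}(\ecF_i+\ecD_i))$, qui est pr\'ecis\'ement le diviseur intervenant dans l'\'enonc\'e. Les trois points s'obtiendront alors en traduisant les majorations de la proposition \ref{prop:compt} via ces expressions~: pour (\ref{item:prop:maj:fonc:j0}), on appliquera le premier point \`a la famille $\bs_{J\setminus\{j_0\}}$ (avec $n=\card{J}-1$)~; pour (\ref{item:prop:maj:fonc:j}), on appliquera les deux bornes \eqref{eq:maj:bis:ker:3} et \eqref{eq:maj:ter:ker:4} du deuxi\`eme point \`a $\bs_J$ (avec $n=\card{J}$)~; enfin pour (\ref{item:prop:maj:eq:j}), on observera d'abord que l'hypoth\`ese $\acc{y}{\scG_j+\scG_{j+1}-\Dtot}\geq \deg(\ecG_j)+\deg(\ecG_{j+1})+2\,\gc-1$ entra\^\i ne l'hypoth\`ese \eqref{hyp:cor:clef:part:gen} (puisque $\deg(\Infu{j\in J}\ddiv(s_j))\geq 0$), avant de conclure par l'\'egalit\'e du troisi\`eme point.

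Aucun obstacle conceptuel nouveau n'interviendra, le travail d\'elicat \'etant enti\`erement fait par la proposition \ref{prop:compt}~; la seule difficult\'e r\'eside dans la traduction m\'eticuleuse des diff\'erents termes de degr\'es.
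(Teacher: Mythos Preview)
Your proof is correct and follows exactly the paper's approach: the paper's own proof consists of a single sentence saying to apply Proposition~\ref{prop:compt} with precisely the choices $\ecH'_j=-\ecG_j+\sum_{i} a_{i,j}(\ecD_i+\ecF_i)$, $\ecH_j=\ecG_j+\sum_{i} b_{i,j}(\ecD_i+\ecF_i)$ and $s_j=\scan{\ecG_j}\prod_{i}(\scan{\ecD_i}\scan{\ecF_i})^{b_{i,j}}$, noting that $\deg(\ecH)=\acc{y}{\Dtot}$ and $\deg(\ecH'_j)=\acc{y}{\scG_j}-\deg(\ecG_j)$. Your version is simply a more detailed write-up of the same argument, including the verification (left implicit in the paper) that the hypothesis of item~(\ref{item:prop:maj:eq:j}) implies \eqref{hyp:cor:clef:part:gen} via the nonnegativity of $\deg(\Inf_j\ddiv(s_j))$.
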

\begin{proof}
On applique la proposition \ref{prop:compt} avec 
\begin{equation}
\ecH'_j=-\ecG_j+\sum_{i\in I} a_{i,j}\,(\ecD_i+\ecF_i),
\quad\quad
\ecH_j=\ecG_j+\sum_{i\in I} b_{i,j}\,(\ecD_i+\ecF_i)
\end{equation}
\begin{equation}
\text{et}\quad\quad
s_j=\scan{\ecG_j}\prod_{i\in I} (\scan{\ecD_j}\,\scan{\ecF_i})^{b_{i,j}},
\end{equation}
en notant qu'on a  
$
\deg(\ecH)=\acc{y}{\Dtot}
$
et, pour $j\in J$, 
$
\deg(\ecH'_j)=\acc{y}{\scG_j}-\deg(\ecG_j)
$.
\end{proof}
\begin{defi}
Un \'el\'ement $(\bbf,\bg)\in (\bR_{\geq 0})^{I\cup J}$ est dit
$\mu_X$-convergent si la s\'erie
\begin{equation}\label{eq:sum:mubecE}
\sum_{\becE\in \diveffc^{I\cup J}}
\abs{\mu_X(\becE)}
q^{-\sum f_i\deg(\ecF_i)-\sum g_j \deg(\ecG_j)}
\end{equation}
est convergente.
\end{defi}
\begin{rem}\label{rem:inter}
On d\'eduit du dernier point de la proposition \ref{prop:mu} 
et de l'\'ecriture de la s\'erie \eqref{eq:sum:mubecE} sous forme de
produit eul\'erien 
les propri\'et\'es suivantes~:
\begin{enumerate}
\item
pour tout $\eps>0$, $(\frac{1}{2}+\eps,\dots,\frac{1}{2}+\eps)$ est
$\mu_X$-convergent~;
\item
on suppose l'intersection des diviseurs $\{\scG_j\}_{j\in J}$ non vide~;
alors pour tout $\eps>0$, $((1)_{i\in I},(\eps)_{j\in J})$ est $\mu_X$-convergent.
\end{enumerate}
\end{rem}
\begin{lemme}\label{lm:muxconv}
Soit $\scD\in \Pic(X)$ et  $\bgamma\in\bR^J$. On suppose qu'il existe
$(\bbf,\bg)\in (\bR_{\geq 0})^{I\cup J}$
v\'erifiant
\begin{enumerate}
\item
$((1+f_i),(g_j+\gamma_j))$
est $\mu_X$-convergent~;
\item
$-\can{X}-\scD-\sumu{i\in I}(1+f_i)\scF_i-\sumu{j\in J} g_j \,\scG_j\in\ceff(X)\setminus \{0\}$.
\end{enumerate}
Alors la s\'erie
\begin{equation}
Z(\scD,\bgamma,t)\eqdef \sum_{\becE\in \diveffc^{I\cup J}}
\abs{\mu_X(\becE)}
\sum_{
\substack{
y\in\Pic(X)^{\vee}\cap \ceff(X)^{\vee}
\\
\acc{y}{\scF_i}\geq \deg(\ecF_i),\quad i\in I
\\
\acc{y}{\scG_j}\geq \deg(\ecG_j),\quad j\in J
\\
\becD\in \diveffc^I
\\
\deg(\ecD_i)=\acc{y}{\scF_i}-\deg(\ecF_i),\quad i\in I
}
}
q^{\acc{y}{\scD}-\sumu{j\in J}\gamma_j\,\deg(\ecG_j)}
\,
t^{\acc{y}{-\can{X}}}
\end{equation}
est $q^{-1}$-contr\^ol\'ee \`a l'ordre
$\rg(\Pic(X))-1$.
\end{lemme}
\begin{proof}
D'apr\`es \eqref{eq:maj:D}, elle est en effet major\'ee \`a une constante multiplicative pr\`es par la s\'erie
\begin{multline}
\sum_{\becE\in \diveffc^{I\cup J}}
\abs{\mu_X(\becE)}
\\
\times\!\!\!\!\!\!\!\!
\sum_{
\substack{
y\in\Pic(X)^{\vee}\cap \ceff(X)^{\vee}
\\
\acc{y}{\scF_i}\geq \deg(\ecF_i),\quad i\in I
\\
\acc{y}{\scG_j}\geq \deg(\ecG_j),\quad j\in J
}
}
\!\!\!\!\!\!\!\!
q^{\acc{y}{\scD}
\sumu{j\in J}
g_j\,
(\acc{y}{\scG_j}-\deg(\ecG_j))
-\gamma_j\,\deg(\ecG_j)
+
\sumu{i\in I}(1+f_i)(\acc{y}{\scF_i}-\deg(\ecF_i))
}
t^{\acc{y}{-\can{X}}}
\end{multline}
elle-m\^eme major\'ee par
\begin{multline}
\sum_{\becE\in \diveffc^{I\cup J}}
\abs{\mu_X(\becE)}
q^{-\sumu{i\in I}(1+f_i)\,\deg(\ecF_i)-\sumu{j\in J}(g_j+\gamma_j)\deg(\ecG_j)}
\\
\times
\sum_{
\substack{
y\in\Pic(X)^{\vee}\cap \ceff(X)^{\vee}
}
}
q^{\acc{y}{\scD+\sumu{i\in I}(1+f_i){\scF_i}+\sumu{j\in J}g_j{\scG_j}}}
\,
t^{\acc{y}{-\can{X}}}.
\end{multline}
D'apr\`es les hypoth\`eses et le lemme \ref{lm:prelim} cette derni\`ere
s\'erie est $q^{-1}$-contr\^ol\'ee \`a l'ordre
$\rg(\Pic(X))-1$.
\end{proof}
\begin{rem}\label{rem:inter:bis}
Supposons que l'intersection des diviseurs $\{\scG_j\}_{j\in J}$ est non vide.
Soit $\scD\in \Pic(X)$ et $\bgamma\in \bR_{\geq 0}^J$. 
Notons $J_0\eqdef\{j\in J,\,\gamma_j=0\}$. 

Supposons en outre 
que
pour tout $\eps>0$ assez petit on a
\beq
\sum_{j\in J_0\setminus J_1}(1-\eps)\,\scG_j
+
\sum_{j\notin  J_0}(1-\gamma_j)\scG_j-\Dtot-\scD\in \ceff(X)\setminus \{0\}.
\eeq
La remarque \ref{rem:inter}, la formule d'adjonction \eqref{eq:adj} et le lemme pr\'ec\'edent
montrent alors que la s\'erie $Z(\scD+\sum \gamma_j \,\scG_j,\bgamma,t)$ 
est $q^{-1}$-contr\^ol\'ee \`a l'ordre
$\rg(\Pic(X))-1$.
\end{rem}

\subsection{Le terme $Z_J$}

Pour $\becD\in \diveffc^I$, $\becE\in \diveffc^{I\cup J}$ et $y\in\Pic(X)^{\vee}$
on d\'efinit $\wfonc(\becD,\becE,y)$  par 
\begin{multline}
\log_q(\wfonc(\becD,\becE,y))
\\
=
\sum_j \acc{y}{\scG_j}
-\acc{y}{\Dtot}
-\sum_j \deg(\ecG_j)
+\deg\left[\Inf_j(\sum_i b_{i,j}(\ecD_i+\ecF_i)+\ecG_j)\right].
\end{multline}
Posons
\begin{equation}
\Zprgen(t)
\eqdef
\sum_{\becE\in \diveffc^{I\cup J}}
\mu_X(\becE)
\!\!\!\!\!\!\!\!\!\!\!\!
\sum_{
\substack{
y\in\Pic(X)^{\vee}\cap \ceff(X)^{\vee}
\\
\acc{y}{\scG_j}\geq \deg(\ecG_j),\quad j\in J
\\
\acc{y}{\scF_i}\geq \deg(\ecF_i),\quad i\in I
\\
\becD\in \diveffc^I
\\
\deg(\ecD_i)=\acc{y}{\scF_i}-\deg(\ecF_i),\quad i\in I 
}
}
\!\!\!\!\!\!\!\!\!\!\!\!
q^{(\card{J}-1)(1-\gc)}
\,
\wfonc(\becD,\becE,y)
\,
t^{\acc{y}{-\can{X}}}
\end{equation} 
et, pour $j_0,j_1\in J$,
\begin{equation}
\Zerr{1,j_0,j_1}(t)
\eqdef
\sum_{\becE\in \diveffc^{I\cup J}}
\abs{\mu_X(\becE)}
\!\!\!\!\!\!\!\!\!\!\!\!\!\!\!\!\!\!\!\!\!\!\!\!
\sum_{
\substack{
y\in\Pic(X)^{\vee}\cap \ceff(X)^{\vee}
\\
\acc{y}{\scG_j}\geq \deg(\ecG_j),\quad j\in J
\\
\acc{y}{\scF_i}\geq \deg(\ecF_i),\quad i\in I
\\
\acc{y}{\scG_{j_0}+\scG_{j_1}-\Dtot}
\leq 
\deg(\ecG_{j_0})+
\deg(\ecG_{j_1})+
2\,\gc-2
\\
\becD\in \diveffc^I\\
\deg(\ecD_i)=\acc{y}{\scF_i}-\deg(\ecF_i),\quad i\in I 
}
}
\!\!\!\!\!\!\!\!\!\!\!\!\!\!\!\!\!\!\!\!\!\!\!\!
q^{(\card{J}-1)(1-\gc)}\wfonc(\becD,\becE,y)
\,
t^{\acc{y}{-\can{X}}}
\end{equation}
\begin{equation}
\text{et}\quad
\Zerr{2,j_0,j_1}(t)
\eqdef
\sum_{\becE\in \diveffc^{I\cup J}}
\abs{\mu_X(\becE)}
\!\!\!\!\!\!\!\!\!\!\!\!\!\!\!\!\!\!\!\!\!\!\!\!
\sum_{
\substack{
y\in\Pic(X)^{\vee}\cap \ceff(X)^{\vee}
\\
\acc{y}{\scG_j}\geq \deg(\ecG_j),\quad j\in J
\\
\acc{y}{\scF_i}\geq \deg(\ecF_i),\quad i\in I
\\
\acc{y}{\scG_{j_0}+\scG_{j_1}-\Dtot}
\leq 
\deg(\ecG_{j_0})+
\deg(\ecG_{j_1})+
2\,\gc-2
\\
\becD\in \diveffc^I\\
\deg(\ecD_i)=\acc{y}{\scF_i}-\deg(\ecF_i),\quad i\in I 
}
}
\!\!\!\!\!\!\!\!\!\!\!\!\!\!\!\!\!\!\!\!\!\!\!\!
\fonc_{J}(\becD,\becE)
\,
t^{\acc{y}{-\can{X}}}.
\end{equation}
Appliquant la proposition \ref{prop:maj:fonc:j}, on obtient le lemme suivant.
\begin{lemme}
Pour toute num\'erotation de $J$,
la s\'erie
$
Z_{J}(t)-\Zprgen(t)
$
est major\'ee par 
\begin{equation}
\sum_{1\leq j\leq \card{J}-1}\Zerr{1,j,j+1}(t)+\Zerr{2,j,j+1}(t).
\end{equation}
\end{lemme}
\subsection{Le terme $\Zprgen$}
Pour $\bd\in \bN^I$ et $\becE\in \diveffc^{I\cup J}$, 
posons
\begin{equation}
\fonc(\bd,\becE)
\eqdef
\sum_{
\substack{
\becD\in \diveffc^I
\\
\deg(\becD)=\bd
}}
q^{\deg(\Inf_j(\ecG_j+\sum_i b_{i,j}(\ecD_i+\ecF_i)))}.
\end{equation}
Compte tenu de la relation \eqref{eq:adj} et de l'\'egalit\'e
$\dim(X)=\card{J}-1$, 
on a donc
\begin{multline}\label{eq:expr:zprgen}
q^{\,(\gc-1)\dim(X)}\Zprgen(t)=
\\
\sum_{\becE\in \diveffc^{I\cup J}}
\mu_X(\becE)
\!\!\!\!\!\!\!\!\!\!\!\!
\sum_{
\substack{
y\in\Pic(X)^{\vee}\cap \ceff(X)^{\vee}
\\
\acc{y}{\scG_j}\geq \deg(\ecG_j),\quad j\in J
\\
\acc{y}{\scF_i}\geq \deg(\ecF_i),\quad i\in I
}
}
\!\!\!\!\!\!\!\!\!\!\!\!
q^{-\sum_j \deg(\ecG_j)-\sum_{i}\acc{y}{\scF_i}}\,
\fonc(\acc{y_i}{\scF_i}-\deg(\ecF_i),\becE)
\,
(q\,t)^{\acc{y}{-\can{X}}}
\end{multline}

\subsubsection{Estimation de $\fonc(\bd,\becE)$}\label{subsubsec:estim:fonc1}
Pour $\be=(\bbf,\bg)\in \bN^{I\cup J}$, on pose
\begin{equation}\label{eq:def:Frhoalphabeta}
F_{\rho,\be}(t)
\eqdef
\sum_{\bd\in \bN^I}
\rho^{\Inf_j
\left(
g_j+
\sum_i
  b_{i,j}(d_i+f_i)
\right)
}
\bt^{\,\bd}\in k[[\rho,(t_i)_{i\in I}]],
\end{equation}
\begin{equation}\label{eq:def:wtFrhoalphabeta}
\text{et}\quad\wt{F}_{\rho,\be}\eqdef\left(\prod_{i\in
    I}1-\,t_i\right)\,F_{\rho,\be}
\eqdef 
\sum_{\bd\in \bN^I}P_{\be,\bd}(\rho)\,\bt^{\bd},
\end{equation}
o\`u $P_{\be,\bd}(\rho)$ est un polyn\^ome en $\rho$ \`a coefficients entiers.
Il d\'ecoule aussit\^ot de 
\eqref{eq:def:Frhoalphabeta}
et \eqref{eq:def:wtFrhoalphabeta} qu'on a 
$P_{\be,0}
=\rho^{\Inf_j\left(\sum_i  b_{i,j}f_i+g_j\right)}$ 
et que pour tout $\bd$, le polyn\^ome
$P_{\be,\bd}(\rho)$ a au
plus $\card{I}$ coefficients non nuls, qui sont tous major\'es en valeur
absolue par $\card{I}$.
\begin{hyp}\label{hyp:conv:sergen}
\begin{enumerate}
\item
Pour tout  $\eta>0$ assez petit, et
pour tout $\bd\neq 0$, on a 
\begin{equation}\label{eq:maj:deg:P0d}
(1-\eta)\,\abs{\bd}\geq 1+\eta+\deg(P_{0,\bd})\quad ;
\end{equation}
\item
Pour $\be\in \{0,1\}^{I\cup J}$, on a une \'ecriture
\beq
\wt{F}_{\rho,\be}(\bt)=
\left(1+\sum_{\bd\neq 0}Q_{\be,\bd}(\rho)\bt^{\bd}\right)\,R_{\be}(\rho,\bt)
\eeq
o\`u $Q_{\be,\bd}(\rho)$ est un polyn\^ome en $\rho$ et $R_{\be}$ un
polyn\^ome en $\rho$ et $\bt$~;
en outre, pour tout  $\eta>0$ assez petit, et
pour tout $\bd\neq 0$, on a
\begin{equation}\label{eq:maj:deg:P0d:bis}
(1-\eta)\,\abs{\bd}\geq 1+\eta+\deg(Q_{\be,\bd}).
\end{equation}
\end{enumerate}
\end{hyp}
\begin{rem}\label{rem:hyp:conv:sergen}
La majoration \eqref{eq:maj:deg:P0d} est v\'erifi\'e d\`es qu'on a une \'ecriture
$
\wt{F}_{\rho,0}(\bt)=\prod_{\alpha\in A}G_{\alpha,\rho}(\bt)^{\eps_{\alpha}}
$
avec $A$ fini, $\eps_{\alpha}\in \{-1,1\}$ et $G_{\alpha,\rho}(\bt)$ est un polyn\^ome
en $\bt$ de terme constant $1$ et pour lequel le coefficient de
$\bt^{\bd}$, pour $\bd\neq 0$, est un polyn\^ome en 
$\rho$ de degr\'e strictement inf\'erieur \`a $\abs{\bd}-1$.
\end{rem}
\begin{rem}\label{rem:courbe:eta}
Si l'hypoth\`ese \ref{hyp:conv:sergen} vaut, on v\'erifie aussit\^ot que
pour tout $\eta>0$ assez petit, pour tout $v\in \courbe^{(0)}$ et tout  $\bt\in \bC^I$
v\'erifiant $\norm{\bt}\leq q^{-1+\eta}$ on a 
\beq
\abs{\wt{F}_{q_v,0}(\bt)-1}=\ecO_{\eta}\left(q_v^{-1-\frac{1}{2}\eta}\right).
\eeq
Il existe donc en particulier un ensemble $\courbe^{(0)}_{\eta}$ fini tel qu'on ait
\begin{equation}
\forall v\notin \courbe^{(0)}_{\eta},\,\forall \bt\in \bC^I,\,
\left(\norm{\bt}\leq q^{-1+\eta}
  \imply \abs{\wt{F}_{q_v,0}(\bt)}\geq \frac{1}{2}\right).
\end{equation}
\end{rem}
Pour $\becE\in \diveffc^{I\cup J}$,
on consid\`ere \`a pr\'esent la s\'erie g\'en\'eratrice
\begin{equation}
Z_{\becE}(\bt)
\eqdef\sum_{\bd\in \bN^{I}}\fonc(\bd,\becE)\bt^{\,\bd}
=\sum_{\becD\in \diveffc^I}
q^{\deg(\Inf_j\left(\ecG_j+\sum_i  b_{i,j}(\ecD_i+\ecF_i)\right))} 
\bt^{\,\deg(\becD)}
\end{equation}
qui s'exprime donc comme le produit eul\'erien
$
\prod_{v\in \courbe^{(0)}} \frac{\wt{F}_{q_v,v(\becE)}(\bt^{f_v})}{\produ{i \in I}(1-t_i^{f_v})}
$.
On a la relation
\begin{align}
\left(\prod_{i\in I} 1-q\,t_i\right)\,Z_{\becE}(\bt)
=
\left(\prod_{i\in I} (1-q\,t_i)\,Z_{\courbe}(t_i)\right)
\prod_{v\in \courbe^{(0)}}\wt{F}_{q_v,v(\becE)}(\bt^{f_v}),
\end{align}
o\`u $Z_{\courbe}(t)\eqdef\sum_{D\in \diveffc}t^{\deg(D)}$ est la
fonction z\^eta de Hasse-Weil de $\courbe$.
Si on a $v(\becE)\in \{0,1\}^{I\cup J}$ et si l'hypoth\`ese \ref{hyp:conv:sergen}
est v\'erifi\'ee, la s\'erie
$\left(\prod_{i\in I} 1-q\,t_i\right)\,Z_{\becE}(\bt)$
converge donc absolument sur un polydisque de rayon $(q^{-1+\eta},\dots,q^{-1+\eta})$ pour $\eta>0$ assez petit.
Notons $\courbe^{(0)}_{\becE}$ l'ensemble (fini) des $v\in \courbe^{(0)}$
tels que $v(\becE)\neq 0$.
Si $\eta>0$ est assez petit et si $\courbe^{(0)}_{\eta}$ est
l'ensemble 
introduit \`a la remarque \ref{rem:courbe:eta}
on peut alors \'ecrire
\begin{multline}
\left(\prod_{i\in I} 1-q\,t_i\right)\,Z_{\becE}(\bt)
\\
=
\left(\prod_i (1-q\,t_i)\,Z_{\courbe}(t_i)\right)
\prod_{v\in \courbe^{(0)}\setminus \courbe^{(0)}_{\eta}}\wt{F}_{q_v,0}(\bt)
\prod_{v\in \courbe^{(0)}_{\eta}}\wt{F}_{q_v,v(\becE)}(\bt)
\prod_{v\in \courbe^{(0)}_{\becE}\setminus \courbe^{(0)}_{\eta}}\frac{\wt{F}_{q_v,v(\becE)}(\bt)}{\wt{F}_{q_v,0}(\bt)}.
\label{eq:ecr}
\end{multline}
\begin{nota}
Si l'hypoth\`ese \ref{hyp:conv:sergen} est v\'erifi\'ee, pour tout $\becE\in
\diveffc^{I\cup J}$ tel que $v(\becE)\in \{0,1\}^{I\cup J}$, 
on note 
\begin{equation}
\cprinc(\becE)\eqdef
\left(\frac{\hc\,q^{1-\gc}}{q-1}\right)^{\card{I}}\,
\prod_{v\in \courbe^{(0)}}\wt{F}_{q_v,v(\becE)}(q_v^{-1})
\end{equation}
et pour tout $\eta>0$ assez petit
\begin{equation}
c_{\eta,\becE}\eqdef\norm{\prod_{v\in \courbe^{(0)}_{\becE}\setminus \courbe^{(0)}_{\eta}}\frac{\wt{F}_{q_v,v(\becE)}(\bt)}{\wt{F}_{q_v,0}(\bt)}}_{q^{-1+\eta}}.
\end{equation}
\end{nota}
De ce qui pr\'ec\`ede et du lemme \ref{lm:estim} d\'ecoule alors aussit\^ot la
proposition suivante.
\begin{prop}\label{prop:maj:n1}
On suppose l'hypoth\`ese \ref{hyp:conv:sergen} v\'erifi\'ee.
Pour tout $\eta>0$ assez petit, il existe une constante $C_{\eta}$ 
telle qu'on ait, pour tout $\bd$ et tout $\becE\in
\diveffc^{I\cup J}$ v\'erifiant $v(\becE)\in \{0,1\}^{I\cup J}$, la majoration
\begin{equation}
\abs{\fonc(\bd,\becE)
-\cprinc(\becE)
\,q^{\sum d_j}}
\leq
C_{\eta}.
c_{\eta,\becE}
\sum_{i\in I}
q^{(1-\eta)d_i+\sumu{i'\neq i} d_{i'}}.
\end{equation}
\end{prop}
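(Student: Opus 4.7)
On a pour but d'appliquer le lemme \ref{lm:estim} avec $n=\card{I}$, $\rho=q$ et $\eps=\eta$ \`a la fonction
$$F(\bt)\eqdef \left(\prod_{i\in I}(1-q\,t_i)\right)\,Z_{\becE}(\bt),$$
de sorte que les coefficients $b_{\bd}$ apparaissant dans le lemme soient pr\'ecis\'ement $\fonc(\bd,\becE)$. Il suffira alors d'\'etablir deux choses~: d'une part que $F(q^{-1},\dots,q^{-1})=\cprinc(\becE)$, d'autre part que pour $\eta>0$ assez petit, $F$ converge absolument sur le polydisque de rayon $q^{-1+\eta}$ avec $\norm{F}_{q^{-1+\eta}}\leq C'_{\eta}\,c_{\eta,\becE}$ pour une certaine constante $C'_{\eta}$ ind\'ependante de $\becE$. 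En injectant ces deux faits dans le lemme \ref{lm:estim}, on obtiendra la majoration souhait\'ee avec $C_{\eta}=q^{-\eta}\,C'_{\eta}/(1-q^{-\eta})^{\card{I}}$.

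Pour le premier point, on r\'e\'ecrit le produit eul\'erien donnant $Z_{\becE}$~: puisque $\prod_v (1-t_i^{f_v})^{-1}=Z_{\courbe}(t_i)$, on obtient
$$F(\bt)=\prod_{i\in I}\bigl[(1-q\,t_i)\,Z_{\courbe}(t_i)\bigr]\cdot\prod_{v\in \courbe^{(0)}}\wt{F}_{q_v,v(\becE)}(\bt^{f_v}).$$
La formule classique $\lim_{t\to q^{-1}}(1-q\,t)\,Z_{\courbe}(t)=h_{\courbe}\,q^{1-\gc}/(q-1)$ donne alors imm\'ediatement, par comparaison avec la d\'efinition de $\cprinc(\becE)$, la valeur souhait\'ee de $F$ au point $(q^{-1},\dots,q^{-1})$.

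Pour le second point, j'exploiterais la d\'ecomposition \eqref{eq:ecr}. Le facteur $\prod_i (1-q\,t_i)\,Z_{\courbe}(t_i)$ est ind\'ependant de $\becE$ et de norme sup born\'ee sur le polydisque. D'apr\`es la remarque \ref{rem:courbe:eta}, qui repose sur l'hypoth\`ese \ref{hyp:conv:sergen}, le produit infini $\prod_{v\notin \courbe^{(0)}_{\eta}}\wt{F}_{q_v,0}(\bt^{f_v})$ converge absolument et uniform\'ement sur le polydisque vers une fonction born\'ee. Le produit fini sur $v\in \courbe^{(0)}_{\eta}$ est major\'e par une constante d\'ependant seulement de $\eta$, uniform\'ement en $\becE$, car la condition $v(\becE)\in \{0,1\}^{I\cup J}$ sur cet ensemble fini de places ne laisse qu'un nombre fini de configurations possibles et donc de polyn\^omes $\wt{F}_{q_v,v(\becE)}$. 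Enfin, le dernier facteur est pr\'ecis\'ement celui dont la norme sup est, par d\'efinition, $c_{\eta,\becE}$.

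Le principal obstacle est de garantir l'uniformit\'e en $\becE$ des diff\'erentes bornes~; c'est exactement la raison d'\^etre de la s\'eparation des places ``exceptionnelles'' $\courbe^{(0)}_{\eta}$ (en-dehors desquelles $\wt{F}_{q_v,0}$ ne s'annule pas sur le polydisque gr\^ace \`a la remarque \ref{rem:courbe:eta}) et de la restriction $v(\becE)\in \{0,1\}^{I\cup J}$, qui limite le comportement local \`a un nombre fini de polyn\^omes en chaque place. L'hypoth\`ese \ref{hyp:conv:sergen}(2), via les polyn\^omes $Q_{\be,\bd}$, est ce qui assure que les rapports $\wt{F}_{q_v,v(\becE)}/\wt{F}_{q_v,0}$ se comportent bien et que la quantit\'e $c_{\eta,\becE}$ a un sens fini.
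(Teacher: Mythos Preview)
Your proof is correct and follows exactly the approach the paper intends: the paper's own proof consists of the single sentence ``De ce qui pr\'ec\`ede et du lemme \ref{lm:estim} d\'ecoule alors aussit\^ot la proposition suivante'', and you have spelled out precisely the details implicit in that sentence (identifying $F$, computing $F(q^{-1},\dots,q^{-1})=\cprinc(\becE)$ via the Euler product, and bounding $\norm{F}_{q^{-1+\eta}}$ factor by factor using \eqref{eq:ecr}). One tiny slip of wording: the local factors $\wt{F}_{q_v,v(\becE)}$ are power series rather than polynomials, but your argument goes through unchanged since there are only finitely many $\be\in\{0,1\}^{I\cup J}$ and finitely many $v\in\courbe^{(0)}_\eta$.
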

\begin{hyp}\label{hyp:conv:cEeps}
L'hypoth\`ese \ref{hyp:conv:sergen} est v\'erifi\'ee et 
pour tout  $\eta>0$ assez petit la s\'erie 
\begin{equation}\label{eq:ser:ceta:becE}
\sum_{\becE\in \diveffc^{I\cup J}}
\abs{\mu_X(\becE)}
c_{\eta,\becE}\,
q^{\,-\sumu{i\in I} (1-\eta)\deg(\ecF_i)
-\sumu{j\in J} \deg(\ecG_j)}
\end{equation}
converge.
\end{hyp}
\begin{rem}\label{rem:hyp:conv:cEeps}
Supposons l'hypoth\`ese \ref{hyp:conv:sergen} v\'erifi\'ee. 
Pour $\be\in \{0,1\}^{I\cup J}\setminus \{(0,\dots,0)\}$,
\'ecrivons 
\beq
R_{\be}(\rho,\bt)=\sum_{\bd} R_{\be,\bd}(\rho)\,\bt^{\bd}
\eeq
et notons $C_{\be}$ une constante v\'erifiant
\beq
\forall \bd\in \bN^I,\quad  \deg(R_{\be,\bd})\leq \abs{\bd}+C_{\be}.
\eeq
Il existe alors une constante $M$ telle que pour $\eta>0$ assez petit,
le degr\'e du polyn\^ome $P_{\be,\bd}(\rho)$ est major\'e par
$\abs{\bd}(1-\eta)+C_{\be}+\eta\,M$.
Ceci entra\^\i ne l'existence d'une constante $C>0$ telle qu'on ait pour tout $\becE$
et tout $\eta$ assez petit la majoration
$
c_{\eta,\becE}
\leq \prod_{v\in \courbe^{(0)}_{\becE}} C\,q_v^{C_{v(\becE)}+2\,\eta\,M}.
$
Ainsi pour $\eta>0$ assez petit la s\'erie \eqref{eq:ser:ceta:becE} est major\'ee par le produit
eul\'erien
\beq
\prod_{v\in \courbe^{(0)}}
1+
C\,\sum_{\be\in \{0,1\}^{I\cup J},\,\be\neq 0}
\abs{\mu_X^0(\be)}
q_v^{C_{\be}+2\,\eta\,M-\sum_i (1-\eta)\,f_i-\sum_jg_j}
\eeq
et l'hypoth\`ese \ref{hyp:conv:cEeps} sera satisfaite d\`es que la
propri\'et\'e suivante est v\'erifi\'ee~:
\beq
\forall \be=(\bbf,\bg)\in \{0,1\}^{I\cup J}\setminus \{(0,\dots,0)\},\quad
\left(\mu_X^0(\be)\neq 0 \imply C_{\be}-\sum_i f_i-\sum_jg_j<-1\right).
\eeq
\end{rem}
\subsubsection{Le terme $\Zprgen$}
Si l'hypoth\`ese \ref{hyp:conv:sergen} est v\'erifi\'ee, 
posons 
\begin{multline}
\Zprprgen(t)
\eqdef
q^{\,\dim(X)\,(1-\gc)}
\sum_{\becE\in \diveffc^{I\cup J}}
\mu_X(\becE)
\cprinc(\becE)
q^{-\sum_i \deg(\ecF_i)
-\sum_j \deg(\ecG_j)}
\\
\times
\sum_{
\substack{
y\in\Pic(X)^{\vee}\cap \ceff(X)^{\vee}
\\
\acc{y}{\scG_j}\geq \deg(\ecG_j),\quad j\in J
\\
\acc{y}{\scF_i}\geq \deg(\ecF_i),\quad i\in I
}
}
(q\,t)^{\acc{y}{-\can{X}}}
\end{multline}
\begin{lemme}
On suppose l'hypoth\`ese \ref{hyp:conv:cEeps} v\'erifi\'ee.
La s\'erie
$
\Zprgen-\Zprprgen
$
est alors $q^{-1}$-contr\^ol\'ee
\`a l'ordre $\rg(\Pic(X))-1$.
\end{lemme}
\begin{proof}
Posons pour $i_0\in I$ et $\eta>0$ assez petit
\begin{multline}
\Zpr{\eta,i_0}(t)
\eqdef
C_{\eta}
\sum_{\becE\in \diveffc^{I\cup J}}
\abs{\mu_X(\becE)}
c_{\eta,\becE}
q^{\eta\,\deg(\ecF_{i_0}) -\sum_i \deg(\ecF_i)
-\sum_j \deg(\ecG_j)}
\\
\times
\sum_{
\substack{
y\in\Pic(X)^{\vee}\cap \ceff(X)^{\vee}
\\
}
}
q^{-\eta\acc{y}{\scF_{i_0}}}
(q\,t)^{\acc{y}{-\can{X}}}
\end{multline}
D'apr\`es le lemme \ref{lm:prelim} et l'hypoth\`ese \ref{hyp:conv:cEeps}, 
pour $\eta$ assez petit,
la s\'erie $\Zpr{\eta,i}$ est $q^{-1}$-contr\^ol\'ee
\`a l'ordre $\rg(\Pic(X))-1$ pour tout $i\in I$.
Or d'apr\`es la proposition \ref{prop:maj:n1}, la s\'erie
$
\Zprgen-\Zprprgen
$
est major\'ee par 
$\sum_{i\in I} \Zpr{\eta,i}$.
\end{proof}

\subsubsection{Le terme $\Zprprgen$}
\begin{hyp}\label{hyp:rel:term:princ}
On a pour tout $v\in \courbe^{(0)}$ la relation
\begin{multline}\label{eq:hyp:rel:term:princ}
\sum_{(\bbf,\bg)\in \{0,1\}^{I\cup J}}
\mu^0_X(\bbf,\bg)
q_v^{-\sum f_i-\sum g_j}
\wt{F}_{q_v,\bbf,\bg}(q_v^{-1})
=
(1-q_v^{-1})^{\rg(\TNS(X))}\frac{\card{X(\kappa_v)}}{q_v^{\dim(X)}}.
\end{multline}
\end{hyp}
\begin{rem}
On notera l'analogie avec la formule de \cite[Lemme 1.25]{Bou:compt}.
\end{rem}
\begin{rem}\label{rem:eff}
Au moins dans le cas o\`u pour tout $i\in I$ l'un au plus des
$\{b_{i,j}\}_{j\in J}$ est non nul, il est facile de voir que les
$F_{\rho,\be}(t)$ sont des fractions rationnelles effectivement
calculables, ce qui peut alors permettre, pour un exemple donn\'e dont
on conna\^\i t l'anneau de Cox et les relations d'incidence des diviseurs
$\scF_i$ et $\scG_j$, 
de confier aux soins d'un logiciel de
calcul formel la v\'erification des  hypoth\`eses \ref{hyp:conv:sergen}, \ref{hyp:conv:cEeps} et 
\ref{hyp:rel:term:princ}.
\end{rem}
\begin{prop}
On suppose les hypoth\`eses \ref{hyp:conv:sergen} et
\ref{hyp:rel:term:princ} satisfaites. 
Alors 
la s\'erie 
\begin{equation}
\Zprprgen(t)
-
\gamma(X)\!\!
\sum_{
y\in\Pic(X)^{\vee}\cap \ceff(X)^{\vee}
}
(q\,t)^{\acc{y}{-\can{X}}}
\end{equation}
est $q^{-1}$-contr\^ol\'ee \`a l'ordre $\rg(\Pic(X))-1$.
\end{prop}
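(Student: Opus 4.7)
The strategy is to split the restricted $y$-sum in $\Zprprgen(t)$ as an unrestricted ``principal'' sum over $y\in \Pic(X)^{\vee}\cap \ceff(X)^{\vee}$ minus an ``error'' sum collecting the terms where at least one of the inequalities $\acc{y}{\scF_i}\geq \deg(\ecF_i)$ or $\acc{y}{\scG_j}\geq \deg(\ecG_j)$ fails. On the principal sum the $\becE$-summation becomes independent of $y$ and folds into an Euler product; on the error sum, lemma \ref{lm:prelim}(2) will supply the required control.

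For the principal sum I use the multiplicativity of $\mu_X$ (proposition \ref{prop:mu}) together with the vanishing $\mu_X^0(\balpha)=0$ as soon as some $\alpha_i\geq 2$, and the product form of $\cprinc(\becE)$, to rewrite
\[
\sum_{\becE}\mu_X(\becE)\,\cprinc(\becE)\,q^{-\sum_i \deg(\ecF_i)-\sum_j\deg(\ecG_j)}
\]
as the Euler product
\[
\left(\frac{\hc\, q^{1-\gc}}{q-1}\right)^{\rg(\Pic(X))}\prod_{v\in \courbe^{(0)}}\sum_{(\bbf,\bg)\in\{0,1\}^{I\cup J}}\mu_X^0(\bbf,\bg)\,q_v^{-\sum f_i-\sum g_j}\,\wt{F}_{q_v,\bbf,\bg}(q_v^{-1}).
\]
Hypothesis \ref{hyp:rel:term:princ} identifies each local factor with $(1-q_v^{-1})^{\rg(\Pic(X))}\card{X(\kappa_v)}/q_v^{\dim(X)}$, and combined with the prefactor $q^{\dim(X)(1-\gc)}$ this reconstructs exactly $\gamma(X)$. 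Consequently the principal part equals $\gamma(X)\sum_{y\in\Pic(X)^{\vee}\cap\ceff(X)^{\vee}}(qt)^{\acc{y}{-\can{X}}}$, i.e.\ the very quantity being subtracted in the statement.

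For the error part, inclusion--exclusion reduces the analysis to a finite linear combination of pieces of the form
\[
\sum_{\becE}\mu_X(\becE)\,\cprinc(\becE)\,q^{-\sum_i\deg(\ecF_i)-\sum_j\deg(\ecG_j)}\!\!\sum_{\substack{y\in\Pic(X)^{\vee}\cap\ceff(X)^{\vee}\\ \acc{y}{\scF_{i_0}}\leq \deg(\ecF_{i_0})-1}}\!\!(qt)^{\acc{y}{-\can{X}}}
\]
and the analogous ones with $\scG_{j_0}$ in place of $\scF_{i_0}$. Lemma \ref{lm:prelim}(2) applied with $\ecC=\ceff(X)^{\vee}$, $x_0=-\can{X}$ (which lies in the interior of $\ceff(X)$) and $x_1=\scF_{i_0}$ or $\scG_{j_0}$ (nonzero in $\ceff(X)$) yields $q^{-1}$-control to order $\rg(\Pic(X))-1$, provided a summability condition of the shape $\sum_{\becE}|\mu_X(\becE)\,\cprinc(\becE)|\,q^{-\sum_i\deg(\ecF_i)-\sum_j\deg(\ecG_j)}\,\rho^{\deg(\ecF_{i_0})}<\infty$ holds for some $\rho>1$.

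The main obstacle is the verification of this last summability. It amounts to a small perturbation of the Euler product used for the principal sum. Hypothesis \ref{hyp:conv:sergen} (via remark \ref{rem:courbe:eta}) keeps $\wt{F}_{q_v,\bbf,\bg}(q_v^{-1})$ close to $1$ for almost all $v$, while hypothesis \ref{hyp:rel:term:princ} shows that the unperturbed local factor coincides with the one entering $\gamma(X)$, and is therefore summable under the standing convergence assumptions on $X$. The weight $\rho^{\deg(\ecF_{i_0})}$ multiplies the local factor at $v$ by $q_v^{f_{i_0,v}\log_q\rho}$, which stays bounded when $\rho>1$ is close enough to $1$ and preserves absolute convergence of the Euler product, thereby furnishing the needed bound.
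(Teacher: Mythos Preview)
Your overall route is exactly the paper's: remove the constraints on $y$, identify the resulting $\becE$-sum with $\gamma(X)$ via the Euler product and hypothesis \ref{hyp:rel:term:princ}, and control the remainder through lemma \ref{lm:prelim}. The principal-part computation is correct and matches the paper line for line.

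There is, however, a real gap in your last paragraph. You invoke remark \ref{rem:courbe:eta} to assert that $\wt{F}_{q_v,\bbf,\bg}(q_v^{-1})$ stays close to $1$ for almost all $v$; but that remark concerns only $\wt{F}_{q_v,0}$. For $(\bbf,\bg)\neq 0$ the constant term of $\wt{F}_{\rho,\be}$ is $\rho^{\Inf_j(\sum_i b_{i,j}f_i+g_j)}$, which is typically a positive power of $q_v$, and hypothesis \ref{hyp:conv:sergen} gives no bound on the polynomial $R_{\be}(\rho,\bt)$ at $(q_v,q_v^{-1})$. Consequently the absolute summability
\[
\sum_{\becE}|\mu_X(\becE)|\,|\cprinc(\becE)|\,q^{-\sum_i\deg(\ecF_i)-\sum_j\deg(\ecG_j)}\,\rho^{\deg(\ecF_{i_0})}<\infty
\]
does \emph{not} follow from hypotheses \ref{hyp:conv:sergen} and \ref{hyp:rel:term:princ} alone, and your appeal to ``standing convergence assumptions on $X$'' (which controls only the specific signed combination in $\gamma(X)$, not the individual $|\wt{F}_{q_v,\be}(q_v^{-1})|$) does not repair this. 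What actually governs this convergence is the condition isolated in remark \ref{rem:hyp:conv:cEeps}, i.e.\ hypothesis \ref{hyp:conv:cEeps}. The paper's own proof is equally terse here (it simply cites lemma \ref{lm:prelim}), and in the global argument (th\'eor\`eme \ref{thm:synth}) hypothesis \ref{hyp:conv:cEeps} is in force, so the chain is ultimately sound; but as a self-contained justification under only the two stated hypotheses, your summability claim needs either an additional assumption or a different argument.
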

\begin{proof}
Rappelons qu'on a
\begin{equation}
\gamma(X)
=
\left(\frac{\hc\,q^{(1-\gc)}}{q-1}\right)^{\rg(\Pic(X))}\:
q^{(1-\gc)\,\dim(X)}\,
\prod_{v\in\courbe^{(0)}}
(1-q_v^{-1})^{\rg(\Pic(X))}\,\frac{\card{X(\kappa_{v})}}{q_v^{\,\dim(X)}}.
\end{equation}
D'apr\`es le lemme \ref{lm:prelim} et la d\'efinition de $\Zprprgen$, la s\'erie
\begin{multline}
\Zprprgen(t)-q^{\,\dim(X)\,(1-\gc)}
\sum_{\becE\in \diveffc^{I\cup J}}
\mu_X(\becE)\,
\cprinc(\becE)\,
q^{-\sum_i \deg(\ecF_i)
-\sum_j \deg(\ecG_j)}
\\
\times
\sum_{
y\in\Pic(X)^{\vee}\cap \ceff(X)^{\vee}
}
(q\,t)^{\acc{y}{-\can{X}}}
\end{multline}
est $q^{-1}$-contr\^ol\'ee \`a l'ordre 
$\rg(\Pic(X))-1$. Or on a 
\begin{multline}
q^{\,\dim(X)\,(1-\gc)}
\sum_{\becE\in \diveffc^{I\cup J}}
\mu_X(\becE)\,
\cprinc(\becE)\,
q^{-\sum_i \deg(\ecF_i)
-\sum_j \deg(\ecG_j)}
\\
\shoveleft{=
\left(\frac{\hc\,q^{1-\gc}}{q-1}\right)^{\rg(\Pic(X))}
q^{\dim(X)(1-\gc)}}
\\
\times
\sum_{\becE\in \diveffc^{I\cup J}}
\mu_X(\becE)
\left[\prod_v\wt{F}_{q_v,v(\becF),v(\becG)}(q_v^{-1})\right]
q^{-\sum_i \deg(\ecF_i)
-\sum_j \deg(\ecG_j)}
\end{multline}
et
\begin{multline}
\sum_{\becE\in \diveffc^{I\cup J}}
\mu_X(\becE)
\left[\prod_v\wt{F}_{q_v,v(\becF),v(\becG)}(q_v^{-1})\right]
q^{-\sum_i \deg(\ecF_i)
-\sum_j \deg(\ecG_j)}
\\
=
\prod_v
\sum_{(\bbf,\bg)\in \{0,1\}^{I\cup J}}
\mu^0_X(\bbf,\bg)
q_v^{-\sum_i f_i-\sum_j g_j}
\wt{F}_{q_v,\bbf,\bg}(q_v^{-1})
\end{multline}
L'hypoth\`ese \ref{hyp:rel:term:princ} donne alors le r\'esultat.
\end{proof}
\subsection{Les termes $\Zerr{1,j_0,j_1}$ et $\Zerr{2,j_0,j_1}$}
\subsubsection{Les termes $\Zerr{1,j_0,j_1}$}
Soit $j_0,j_1\in J$.
On a pour
$\Zerr{1,j_0,j_1}$ une expression semblable \`a \eqref{eq:expr:zprgen}, avec $\abs{\mu_X(\becE)}$ en lieu et place de
$\mu_X(\becE)$ et le deuxi\`eme domaine de sommation restreint aux $y$ v\'erifiant
\begin{equation}
\acc{y}{\scG_{j_0}+\scG_{j_1}-\Dtot}
\leq 
\deg(\ecG_{j_0})+
\deg(\ecG_{j_1})+
2\,\gc-2.
\end{equation}

Par un raisonnement analogue \`a celui effectu\'e pour $\Zprprgen$,
on montre que si l'hypoth\`ese \ref{hyp:conv:cEeps} est satisfaite la s\'erie 
\begin{multline}
\Zerr{1,j_0,j_1}(t)
\\
-
q^{\,\dim(X)\,(1-\gc)}
\!\!\!\!\!\!\!\!
\sum_{\becE\in \diveffc^{I\cup J}}
\!\!\!\!\!\!\!\!
\abs{\mu_X(\becE)}
\cprinc(\becE)\,
q^{-\sum_i \deg(\ecF_i)
-\sum_j \deg(\ecG_j)}
\!\!\!\!\!\!\!\!\!\!\!\!\!\!\!\!\!\!
\!\!\!\!\!\!\!\!\!\!\!\!\!\!\!\!\!\!
\sum_{
\substack{
y\in\Pic(X)^{\vee}\cap \ceff(X)^{\vee}
\\
\acc{y}{\scG_{j_0}+\scG_{j_1}-\Dtot}
\leq 
\deg(\ecG_{j_0})+
\deg(\ecG_{j_1})+
2\,\gc-2
}
}
\!\!\!\!\!\!\!\!\!\!\!\!\!\!\!\!\!\!
\!\!\!\!\!\!\!\!\!\!\!\!\!\!\!\!\!\!
(q\,t)^{\acc{y}{-\can{X}}}
\end{multline}
est $q^{-1}$-contr\^ol\'ee \`a l'ordre $\rg(\Pic(X))-1$.
En utilisant le lemme \ref{lm:prelim}, on obtient aussit\^ot ce qui suit.
\begin{prop}\label{prop:need:hyp:I}
Soit $j_0,j_1\in J$ tels qu'on ait 
\begin{equation}
\scG_{j_0}+\scG_{j_1}-\Dtot\in \ceff(X)\setminus \{0\}.
\end{equation}
On suppose que l'hypoth\`ese \ref{hyp:conv:cEeps} est satisfaite.
Alors $\Zerr{1,j_0,j_1}$ est $q^{-1}$-contr\^ol\'ee \`a l'ordre $\rg(\Pic(X))-1$
\end{prop}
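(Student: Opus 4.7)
Je suivrai pas \`a pas la strat\'egie qui a conduit au contr\^ole de $\Zprgen-\Zprprgen$, adapt\'ee pour tenir compte de la contrainte suppl\'ementaire sur $y$ pr\'esente dans la d\'efinition de $\Zerr{1,j_0,j_1}$.

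Premi\`ere \'etape~: en appliquant la proposition \ref{prop:maj:n1} sous l'hypoth\`ese \ref{hyp:conv:cEeps}, on substitue dans $\Zerr{1,j_0,j_1}$ l'estim\'ee principale pour $\fonc_J(\becD,\becE)$, ce qui ram\`ene le probl\`eme \`a d\'emontrer que la s\'erie
\begin{equation*}
\cT(t) \eqdef q^{\dim(X)(1-\gc)} \!\!\!\sum_{\becE\in\diveffc^{I\cup J}} |\mu_X(\becE)|\,\cprinc(\becE)\,q^{-\sum_i \deg(\ecF_i)-\sum_j \deg(\ecG_j)} \!\!\sum_{y\in S_{\becE}} (qt)^{\acc{y}{-\can{X}}}
\end{equation*}
est $q^{-1}$-contr\^ol\'ee \`a l'ordre $\rg(\Pic(X))-1$, o\`u $S_{\becE}$ d\'esigne l'ensemble des $y\in \Pic(X)^{\vee}\cap \ceff(X)^{\vee}$ v\'erifiant $\acc{y}{\scG_j}\geq \deg(\ecG_j)$, $\acc{y}{\scF_i}\geq \deg(\ecF_i)$ et surtout $\acc{y}{\scG_{j_0}+\scG_{j_1}-\Dtot}\leq \deg(\ecG_{j_0})+\deg(\ecG_{j_1})+2\gc-2$. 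L'erreur entre $\Zerr{1,j_0,j_1}$ et $\cT$ est contr\^ol\'ee par le m\^eme argument que celui menant de $\Zprgen$ \`a $\Zprprgen$, la contrainte suppl\'ementaire ne faisant que restreindre le domaine de sommation.

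Deuxi\`eme \'etape~: pour traiter $\cT$, j'appliquerai le point 2 du lemme \ref{lm:prelim} avec $N=\Pic(X)$, $\ecC=\ceff(X)$, $x_0=-\can{X}$ et $x_1=\scG_{j_0}+\scG_{j_1}-\Dtot$. L'hypoth\`ese de la proposition garantit pr\'ecis\'ement que $x_1\in \ceff(X)\setminus\{0\}$, qui est l'\'el\'ement \emph{non nul} requis par le lemme. Apr\`es indexation par un entier $n$ de l'ensemble des $\becE\in \diveffc^{I\cup J}$, on pose $a_n = |\mu_X(\becE)|\cprinc(\becE)\,q^{-\sum_i \deg(\ecF_i)-\sum_j \deg(\ecG_j)}$ et $M_n = \deg(\ecG_{j_0})+\deg(\ecG_{j_1})+2\gc-2$, et le lemme fournit, apr\`es le changement d'\'echelle $t\mapsto qt$, le $q^{-1}$-contr\^ole \`a l'ordre $\dim(\ceff(X))-1=\rg(\Pic(X))-1$ voulu.

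L'obstacle principal r\'eside dans la v\'erification de l'hypoth\`ese de convergence du lemme \ref{lm:prelim}.2, \`a savoir l'existence d'un $\rho>1$ tel que $\sum_n a_n\,\rho^{M_n}$ converge. Pour cela, j'utiliserai la d\'ecomposition eul\'erienne \eqref{eq:ecr} pour majorer $\cprinc(\becE)$, \`a une constante multiplicative uniforme pr\`es, par $c_{\eta,\becE}\,q^{\eta\sum_i \deg(\ecF_i)}$ pour tout $\eta>0$ arbitrairement petit (via les estim\'ees de la remarque \ref{rem:hyp:conv:cEeps}). On peut alors choisir $\rho=q^{\eta'}$ avec $\eta'>0$ suffisamment petit pour absorber le facteur $\rho^{M_{\becE}}$ dans le poids $q^{-\sum_j \deg(\ecG_j)}$, et la convergence en r\'esulte directement par l'hypoth\`ese \ref{hyp:conv:cEeps}.
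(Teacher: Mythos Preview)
Your approach is essentially the paper's: reduce $\Zerr{1,j_0,j_1}$ to its ``principal'' version via the estimate of Proposition~\ref{prop:maj:n1} (exactly as in the passage $\Zprgen\to\Zprprgen$), then invoke Lemma~\ref{lm:prelim} with $x_1=\scG_{j_0}+\scG_{j_1}-\Dtot$. Two small points. First, $\Zerr{1,j_0,j_1}$ is built from $\wfonc(\becD,\becE,y)$ (equivalently $\fonc(\bd,\becE)$ after summing over $\becD$), not from $\fonc_J$; this is only a notational slip, but worth correcting. Second, in your final paragraph you absorb $\rho^{M_{\becE}}=\rho^{\deg(\ecG_{j_0})+\deg(\ecG_{j_1})+2\gc-2}$ into the weight on the $\ecG_j$ side and claim the convergence ``r\'esulte directement'' from Hypoth\`ese~\ref{hyp:conv:cEeps}; but that hypothesis, as literally stated, only gives slack $(1-\eta)$ on the $\ecF_i$ exponents, not on the $\ecG_j$ exponents. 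The fix is easy: use the Euler-product form of Remarque~\ref{rem:hyp:conv:cEeps}, where the strict inequality $C_{\be}-\sum_i f_i-\sum_j g_j<-1$ furnishes a small margin on \emph{all} the exponents, so that replacing $-\deg(\ecG_{j_0})-\deg(\ecG_{j_1})$ by $-(1-\eta')(\deg(\ecG_{j_0})+\deg(\ecG_{j_1}))$ still converges for $\eta'$ small. The paper itself is equally laconic at this point (``en utilisant le lemme~\ref{lm:prelim}, on obtient aussit\^ot''), so your sketch is at the same level of detail.
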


\subsubsection{Les termes $\Zerr{2,j_0,j_1}$}
Soit $j_0,j_1\in J$.
En appliquant la proposition \ref{prop:maj:fonc:j}, on voit que $\Zerr{2,j_0,j_1}$
est major\'ee par la somme des deux s\'eries
\begin{equation}\label{eq:zerr2j0j1:1}
\sum_{\becE\in \diveffc^{I\cup J}}
\!\!\!\!\!\!\!\!\!\!\!\!
\abs{\mu_X(\becE)}
\sum_{
\substack{
y\in\Pic(X)^{\vee}\cap \ceff(X)^{\vee}
\\
\acc{y}{\scG_j}\geq \deg(\ecG_j),\quad j\in J
\\
\acc{y}{\scF_i}\geq \deg(\ecF_i),\quad i\in I
\\
\becD\in \diveffc^I\\
\deg(\ecD_i)=\acc{y}{\scF_i}-\deg(\ecF_i),\quad i\in I 
}
}
\!\!\!\!\!\!\!\!\!\!\!\!
q^{\card{J}-2+(1-\frac{1}{\card{J}-1})\left(\acc{y}{\sum_{j} \scG_j}-\sum_{j}\deg(\ecG_j)\right)}
t^{\acc{y}{-\can{X}}}
\end{equation}
et
\begin{equation}\label{eq:zerr2j0j1:2}
\sum_{\becE\in \diveffc^{I\cup J}}
\abs{\mu_X(\becE)}
\!\!\!\!\!\!\!\!\!\!\!\!
\sum_{
\substack{
y\in\Pic(X)^{\vee}\cap \ceff(X)^{\vee}
\\
\acc{y}{\scG_j}\geq \deg(\ecG_j),\quad j\in J
\\
\acc{y}{\scF_i}\geq \deg(\ecF_i),\quad i\in I
\\
\acc{y}{\scG_{j_0}+\scG_{j_1}-\Dtot}
\leq 
\deg(\ecG_{j_0})+
\deg(\ecG_{j_1})+
2\,\gc-2
\\
\becD\in \diveffc^I\\
\deg(\ecD_i)=\acc{y}{\scF_i}-\deg(\ecF_i),\quad i\in I 
}
}
\!\!\!\!\!\!\!\!\!\!\!\!
q^{\card{J}-2}\,\wfonc(\becD,\becE,y)
\,
t^{\acc{y}{-\can{X}}}
\end{equation}

\begin{hyp}\label{hyp:zerr2:bis}
La s\'erie $Z((1-\frac{1}{\card{J}-1})\sum
  \scG_j,(1-\frac{1}{\card{J}-1})_{j\in J},t)$
(\cf l'\'enonc\'e du lemme \ref{lm:muxconv})
est $q^{-1}$-contr\^ol\'ee \`a l'ordre $\rg(\Pic(X))-1$.
\end{hyp}
\begin{rem}
D'apr\`es le lemme \ref{lm:muxconv} et  la formule d'adjonction \eqref{eq:adj},
l'hypoth\`ese \ref{hyp:zerr2:bis} est satisfaite
d\`es qu'il existe $(\bbf,\bg)\in (\bR_{\geq 0})^{I\cup J}$ tel que
\begin{equation}
((1+f_i)_{i\in I},\left(g_j+1-\frac{1}{\card{J}-1}
\right)_{j\in J})
\end{equation}
est $\mu_X$-convergent et  
\begin{equation}
\sumu{j}\left(\frac{1}{\card{J}-1}-g_j\right)\,\scG_j
-\Dtot-\sum_i f_i \scF_i\in \ceff(X)\setminus \{0\}.
\end{equation}
\end{rem}
\begin{rem}\label{rem:hyp:inter}
Supposons que l'intersection des $\{\scG_j\}_{j\in J}$ est non vide.
D'apr\`es la remarque \ref{rem:inter:bis}, l'hypoth\`ese
\ref{hyp:zerr2:bis} est alors
satisfaite d\`es qu'on a
\begin{equation}
\frac{1}{\card{J}-1}\sumu{j}\scG_j
-\Dtot\in \ceff(X)\setminus \{0\}.
\end{equation}
\end{rem}

\begin{prop}\label{prop:need:hyp:II}
Soit $j_0,j_1\in J$ tel qu'on ait 
\begin{equation}
\scG_{j_0}+\scG_{j_1}-\Dtot\in \ceff(X)\setminus \{0\}.
\end{equation}
On suppose que les hypoth\`ese \ref{hyp:conv:cEeps} et \ref{hyp:zerr2:bis} sont satisfaites.
Alors la s\'erie $\Zerr{2,j_0,j_1}$ est $q^{-1}$-contr\^ol\'ee \`a l'ordre $\rg(\Pic(X))-1$.
\end{prop}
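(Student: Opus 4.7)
La stratégie consiste à étendre la démarche suivie pour la proposition \ref{prop:need:hyp:I}. On part de la majoration fournie par la proposition \ref{prop:maj:fonc:j}, point \ref{item:prop:maj:fonc:j}, qui majore $\Zerr{2,j_0,j_1}$ par la somme des séries \eqref{eq:zerr2j0j1:1} et \eqref{eq:zerr2j0j1:2}, et l'on traite chaque terme séparément.

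Le premier terme \eqref{eq:zerr2j0j1:1} se reconnaît, à un facteur multiplicatif $q^{\card{J}-2}$ près et en ignorant la contrainte supplémentaire sur $y$ (qui ne fait que réduire la somme, tous les termes étant positifs), comme la série $Z((1-\frac{1}{\card{J}-1})\sum_j\scG_j,(1-\frac{1}{\card{J}-1})_{j\in J},t)$ du lemme \ref{lm:muxconv}. L'hypothèse \ref{hyp:zerr2:bis} garantit directement que cette série est $q^{-1}$-contrôlée à l'ordre $\rg(\Pic(X))-1$.

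Pour le second terme \eqref{eq:zerr2j0j1:2}, on reprend l'analyse menée pour $\Zprgen$ et $\Zprprgen$. On somme d'abord sur $\becD$ à $y$ fixé~: la définition de $\wfonc$ permet d'exprimer cette somme comme $q^{\acc{y}{\sum_j\scG_j-\Dtot}-\sum_j\deg(\ecG_j)}\,\fonc(\bd,\becE)$ avec $\bd=(\acc{y}{\scF_i}-\deg(\ecF_i))_{i\in I}$. La proposition \ref{prop:maj:n1}, sous l'hypothèse \ref{hyp:conv:cEeps}, permet alors de remplacer $\fonc(\bd,\becE)$ par son terme principal $\cprinc(\becE)\,q^{\sum d_i}$~: l'erreur fournit, comme dans le traitement de $\Zprprgen$, une série $q^{-1}$-contrôlée à l'ordre $\rg(\Pic(X))-1$. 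La contribution du terme principal, après simplification via la formule d'adjonction \eqref{eq:adj}, prend la forme d'une somme pondérée sur $\becE$ d'une série en $y$ à laquelle on applique le lemme \ref{lm:prelim}, point 2, avec $x_0=-\can{X}$ et $x_1=\scG_{j_0}+\scG_{j_1}-\Dtot$~; c'est précisément l'hypothèse de positivité $\scG_{j_0}+\scG_{j_1}-\Dtot\in\ceff(X)\setminus\{0\}$ qui garantit que $x_1$ est un élément non nul de $\ceff(X)$ et fournit ainsi le gain nécessaire dans l'ordre du pôle.

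L'étape potentiellement la plus délicate est le contrôle du reste issu de la proposition \ref{prop:maj:n1}, car il faut vérifier que la sommabilité sur $\becE$ est préservée malgré le facteur multiplicatif supplémentaire $q^{\acc{y}{\sum_j\scG_j-\Dtot}-\sum_j\deg(\ecG_j)}$ provenant de la sommation sur $\becD$. Ce contrôle devrait découler, comme dans le cas de $\Zprprgen$, de l'hypothèse \ref{hyp:conv:cEeps} sous la forme raffinée discutée à la remarque \ref{rem:hyp:conv:cEeps}, combinée au lemme \ref{lm:prelim}, point 1.
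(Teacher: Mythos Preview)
Your argument is correct, but you work harder than needed on the second term. The paper's proof is a two-line observation: the series \eqref{eq:zerr2j0j1:2} coincides, up to the multiplicative constant $q^{\card{J}-2}/q^{(\card{J}-1)(1-\gc)}$, with $\Zerr{1,j_0,j_1}$ itself, so proposition \ref{prop:need:hyp:I} (under hypothesis \ref{hyp:conv:cEeps} and the positivity assumption on $\scG_{j_0}+\scG_{j_1}-\Dtot$) dispatches it immediately. You instead re-run the whole $\Zprgen\to\Zprprgen$ analysis for this term, which is fine but duplicates work already done; in particular the ``potentially delicate'' step you flag in your last paragraph is \emph{exactly} the step already carried out in the treatment of $\Zerr{1,j_0,j_1}$, so there is no new difficulty. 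Your handling of \eqref{eq:zerr2j0j1:1} via hypothesis \ref{hyp:zerr2:bis} matches the paper.
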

\begin{proof}
En effet, sous l'hypoth\`ese \ref{hyp:conv:cEeps}, le r\'esultat est
v\'erifi\'e pour  la s\'erie \eqref{eq:zerr2j0j1:2} 
(qui coïncide \`a une constante pr\`es avec $\Zerr{1,j_0,j_1}$) et 
sous l'hypoth\`ese \ref{hyp:zerr2:bis} 
il l'est pour la s\'erie \eqref{eq:zerr2j0j1:2}.
\end{proof}

Afin d'appliquer les propositions \ref{prop:need:hyp:I} et
\ref{prop:need:hyp:II}, nous aurons besoin de l'hypoth\`ese suivante.
\begin{hyp}\label{hyp:pos}
Il existe une num\'erotation de $J$ telle qu'on ait 
\begin{equation}
\forall 1\leq j\leq \card{J}-1,\quad \scG_{j}+\scG_{j+1}-\Dtot\in \ceff(X)\setminus \{0\}.
\end{equation}
\end{hyp}
\subsection{Les termes $Z_K$, avec $K\neq J$}
\begin{prop}
Soit $K\subset J$ avec $K\neq J$.
On suppose que l'hypoth\`ese \ref{hyp:zerr2:bis} est satisfaite.
Alors la s\'erie $Z_{K}$ est $q^{-1}$-contr\^ol\'ee \`a l'ordre $\rg(\Pic(X))-1$.
\end{prop}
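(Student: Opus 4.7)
The plan is to reduce to hypothesis \ref{hyp:zerr2:bis} by bounding $\fonc_K$ pointwise using Proposition \ref{prop:compt} item 1. The key observation is that since $K \subsetneq J$, the cardinality $n = \card{K}$ satisfies $n \leq \card{J} - 1$, so in particular $1 - 1/n \leq 1 - 1/(\card{J} - 1)$ (with the convention that the left-hand side is $0$ for $n \leq 1$). This monotonicity will let all the truncated series $Z_K$ with $K \subsetneq J$ be absorbed into a single dominating series, namely the one controlling the first summand of \eqref{eq:zerr2j0j1:1}.

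More precisely, I would apply Proposition \ref{prop:compt} item 1 to the $n$ sections $(s_j)_{j \in K}$ whose relevant divisors $\ecH'_j = -\ecG_j + \sum_{i \in I} a_{i,j}(\ecD_i + \ecF_i)$ satisfy $\deg(\ecH'_j) = \acc{y}{\scG_j} - \deg(\ecG_j) \geq 0$ under the summation constraints defining $Z_K$. This gives, for $n \geq 1$,
\begin{equation}
\log_q \fonc_K(\becD, \becE) \leq n - 1 + \left(1 - \tfrac{1}{n}\right) \sum_{j \in K}\bigl(\acc{y}{\scG_j} - \deg(\ecG_j)\bigr),
\end{equation}
while $n = 0$ gives trivially $\fonc_\emptyset = 1$. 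Using nonnegativity of the summands and the monotonicity noted above, I would then extend the sum from $K$ to $J$ and replace $n - 1$ by $\card{J} - 2$, arriving at the uniform bound
\begin{equation}
\log_q \fonc_K(\becD, \becE) \leq \card{J} - 2 + \left(1 - \tfrac{1}{\card{J}-1}\right) \sum_{j \in J}\bigl(\acc{y}{\scG_j} - \deg(\ecG_j)\bigr),
\end{equation}
which is precisely the exponent that appears in the first line of \eqref{eq:zerr2j0j1:1}.

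Taking absolute values of $\mu_X(\becE)$ in the definition of $Z_K$ and inserting this pointwise bound, I would obtain a coefficient-wise domination of $Z_K$ by a constant multiple of $Z\bigl((1-\tfrac{1}{\card{J}-1}) \sum_{j \in J} \scG_j,\, (1-\tfrac{1}{\card{J}-1})_{j \in J},\, t\bigr)$ in the notation of Lemma \ref{lm:muxconv}. By hypothesis \ref{hyp:zerr2:bis} this dominating series is $q^{-1}$-contrôlée à l'ordre $\rg(\Pic(X)) - 1$, and the same control transfers to $Z_K$ via the coefficient-wise majorization definition of control.

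There is no real obstacle here beyond a bit of bookkeeping: Proposition \ref{prop:compt} item 1 is designed precisely for the generic case where no further positivity input is available, and the argument is essentially a subcase of the one already carried out for the series \eqref{eq:zerr2j0j1:1} appearing in the bound for $\Zerr{2,j_0,j_1}$. In particular, no analogue of Proposition \ref{prop:need:hyp:I} (nor hypothesis \ref{hyp:conv:cEeps}) is needed, because the condition $K \neq J$ already rules out the maximal-rank contribution in Proposition \ref{prop:compt}, so a crude dominating bound suffices and no main-term extraction is required.
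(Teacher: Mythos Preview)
Your proposal is correct and follows essentially the same route as the paper: the paper first reduces to the maximal case $K=J\setminus\{j_0\}$ via the trivial monotonicity $\fonc_{K'}\leq\fonc_K$ for $K'\subset K$, then invokes Proposition~\ref{prop:maj:fonc:j} item~\ref{item:prop:maj:fonc:j0} (itself Proposition~\ref{prop:compt} item~1 with $n=\card{J}-1$) to land on the series~\eqref{eq:zerr2j0j1:1}, which is $q^{-1}$-contr\^ol\'ee under hypothesis~\ref{hyp:zerr2:bis}. Your version merely folds the monotonicity step into the inequality $1-1/\card{K}\leq 1-1/(\card{J}-1)$, which is the same argument in slightly different packaging.
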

\begin{proof}
Si $K'\subset K$, il est imm\'ediat que la s\'erie $Z_{K'}$ est
major\'ee par $Z_{K}$. Il suffit donc de traiter le cas o\`u $K$ est \'egal
\`a $J\setminus \{j_0\}$.
La s\'erie consid\'er\'ee s'\'ecrit pour m\'emoire
\begin{equation}
\sum_{\becE\in \diveffc^{I\cup J}}
\abs{\mu_X(\becE)}
\sum_{
\substack{
y\in\Pic(X)^{\vee}\cap \ceff(X)^{\vee}
\\
\acc{y}{\scG_j}\geq \deg(\ecG_j),\quad j\in J
\\
\acc{y}{\scF_i}\geq \deg(\ecF_i),\quad i\in I
\\
\becD\in \diveffc^I\\
\deg(\becD)=(\acc{y}{\scF_i}-\deg(\ecF_i))}
}
\fonc_{J\setminus \{j_0\}}(\becD,\becE)
\,
t^{\acc{y}{-\can{X}}}
\end{equation}
D'apr\`es la proposition \ref{prop:maj:fonc:j},
cette s\'erie est major\'ee \`a une constante pr\`es par  
\begin{equation}
\sum_{\becE\in \diveffc^{I\cup J}}
\abs{\mu_X(\becE)}
\sum_{
\substack{
y\in\Pic(X)^{\vee}\cap \ceff(X)^{\vee}
\\
\acc{y}{\scG_j}\geq \deg(\ecG_j),\quad j\in J
\\
\acc{y}{\scF_i}\geq \deg(\ecF_i),\quad i\in I
\\
\becD\in \diveffc^I\\
\deg(\becD)=(\acc{y}{\scF_i}-\deg(\ecF_i))}
}
q^{\left(1-\frac{1}{\card{J}-1}\right)(\sumu{j\neq j_0}\acc{y}{\scG_j}-\deg(\ecG_j))}
t^{\acc{y}{-\can{X}}}
\end{equation}
et donc par la s\'erie \eqref{eq:zerr2j0j1:1}, pour laquelle 
le r\'esultat est v\'erifi\'e sous l'hypoth\`ese \ref{hyp:zerr2:bis}. 
\end{proof}

\section{Construction d'une famille de quadrique intrins\`eques et
  d\'emonstration
du r\'esultat principal}\label{sec:constr}
Le but de cette section est de construire une famille de vari\'et\'es
justiciables de l'application de la m\'ethode d\'ecrite dans la section pr\'ec\'edente.
\subsection{Une famille d'\'eventails projectifs et lisses}\label{subsec:famfan}
Soit $\indfan$ un ensemble fini de cardinal sup\'erieur \`a $3$ et $K$ un $\bZ$-module libre de 
base $(\{\scF_i\}_{i\in \indfan},\scF_0)$.
Pour $i\in \indfan$, on pose $\scG_i\eqdef-\scF_i+\scF_0+\sum_{j\in \indfan}\scF_j$.
Soit $\pi\,:\,\bZ^{(\indfan\times\{1,2\})\cup\{0\}}\to M$ le morphisme qui envoie
  $e_{i,1}$ sur $\scF_i$, $e_{i,2}$ sur $\scG_i$ et $e_{0}$ sur $\scF_0$.
On consid\`ere la suite exacte de $\bZ$-modules libres de rang fini
\begin{equation}
0\to M\overset{\iota}{\to} \bZ^{(\indfan\times\{1,2\})\cup \{0\}}\overset{\pi}{\to} K\to 0
\end{equation}
et la suite exacte duale
\begin{equation}
0\to K^{\vee}\to \bZ^{(\indfan\times\{1,2\})\cup \{0\}}\overset{\iota^{\vee}}{\to} M^{\vee}\to 0.
\end{equation}
Pour $i\in \indfan$ soit  $g_i\eqdef \iota^{\vee}(e_{i,2})$. Ainsi
$\{g_i\}_{i\in \indfan}$ est une base de $M^{\vee}$.
On note $\{x_i\}_{i\in \indfan}$ sa base duale. 
On note  \'egalement $h\eqdef \iota^{\vee}(e_0)=-\sum_{i\in \indfan}g_i$ et,
pour tout $i\in \indfan$, $f_i\eqdef \iota^{\vee}(e_{i,1})=h+g_i$.
\begin{prop}\label{prop:ex:un:sigma:n}
Aux automorphismes induits par les permutations des \'el\'ements de la
base $\{g_i\}$ pr\`es, il existe un unique \'eventail simplicial  de $M^{\vee}$
dont l'ensemble des g\'en\'erateurs des rayons est $\{h\}\cup\{f_i\}_{i\in \indfan}\cup\{g_i\}_{i\in \indfan}$. 
Cet \'eventail est projectif et lisse.
\end{prop}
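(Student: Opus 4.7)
La stratégie est de construire l'éventail explicitement comme une suite de subdivisions en étoile appliquées à l'éventail de $\bP^n$ (où $n = \card{\indfan}$), puis d'établir l'unicité en exploitant les relations arithmétiques entre les rayons. Concrètement, on part de l'éventail $\Sigma_0$ de $\bP^n$ dans $M^\vee$, dont l'ensemble des générateurs des rayons est $\{g_i\}_{i\in \indfan}\cup\{h\}$ (en vertu de la relation $h = -\sum_i g_i$). L'observation clef est que chaque $f_i = g_i + h$ appartient à l'intérieur relatif du cône régulier bidimensionnel $\cone{g_i, h}$ de $\Sigma_0$. On définit alors $\Sigma$ en effectuant, pour chaque $i\in \indfan$, la subdivision en étoile de $\cone{g_i, h}$ par le rayon engendré par $f_i$; on vérifiera que l'ordre de ces opérations n'affecte le résultat qu'à une permutation de $\{g_i\}$ près.

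Pour la lissité et la projectivité, on observe que $f_i$ est la somme des deux générateurs primitifs du cône lisse $\cone{g_i, h}$: chaque subdivision en étoile correspond donc à l'éclatement torique d'une sous-variété lisse, ce qui préserve les deux propriétés; partant de l'éventail lisse et projectif $\Sigma_0$, on obtient que $\Sigma$ l'est également. Pour l'unicité, soit $\Sigma'$ un éventail simplicial complet de $M^\vee$ dont l'ensemble des rayons est celui prescrit. La relation $f_i = g_i + h$ place strictement $f_i$ dans $\cone{g_i, h}$, ce qui interdit à tout cône de $\Sigma'$ de contenir simultanément $g_i$ et $h$ parmi ses rayons (sinon le rayon $\{f_i\}$ rencontrerait ce cône à l'intérieur de la $2$-face $\cone{g_i, h}$, en violation de l'axiome d'intersection). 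Par conséquent tout cône de $\Sigma'$ contenant $h$ a ses autres rayons dans $\{f_j\}_{j\in \indfan}$; en passant au quotient $M^\vee/\langle h\rangle$, on vérifie que $\bar f_j = \bar g_j$ et que l'unique relation linéaire entre ces vecteurs est $\sum_j \bar f_j = 0$, ce qui identifie le lien de $h$ dans $\Sigma'$ à l'éventail de $\bP^{n-1}$: les cônes maximaux de $\Sigma'$ contenant $h$ sont exactement les $\cone{h, f_j : j\neq i_0}$ pour $i_0\in \indfan$.

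L'obstacle principal sera la description des cônes de $\Sigma'$ ne contenant pas $h$. L'identité $\sum_i f_i = (n-1)h$ montre que $h$ se trouve à l'intérieur de $\cone{f_i : i\in \indfan}$, si bien qu'aucun cône de $\Sigma'$ ne peut contenir tous les $f_i$. On proposera de procéder par récurrence sur $n$ en examinant le lien de chaque $f_i$, qui hérite d'une structure d'éventail simplicial complet avec des rayons combinatoirement analogues à celle de l'énoncé en dimension plus petite, le rayon $g_i$ jouant le rôle que $h$ tenait au niveau supérieur; les cônes restants se trouveront alors déterminés à permutation près des indices. Un point délicat est que l'éventail $\Sigma$ construit ci-dessus n'est pas globalement invariant sous $S_{\indfan}$ (les différents ordres de subdivision produisent des éventails distincts reliés par des permutations de la base); l'argument d'unicité devra donc absorber chaque choix combinatoire successif dans une permutation globale de $\{g_i\}$.
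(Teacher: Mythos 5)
Votre traitement de l'existence, de la lissit\'e et de la projectivit\'e est correct et emprunte une voie r\'eellement diff\'erente de celle du texte~: vous obtenez l'\'eventail par subdivisions en \'etoile successives aux points $f_i=g_i+h$, sommes des g\'en\'erateurs primitifs des c\^ones lisses $\cone{g_i,h}$ de l'\'eventail de $\bP^{n}$, c'est-\`a-dire par une suite d'\'eclatements toriques de centres lisses, ce qui donne gratuitement les deux propri\'et\'es. Le texte proc\`ede autrement (via la proposition \ref{prop:descr:ev})~: il d\'ecrit l'\'eventail complet minimal $\Sigma$ dont les c\^ones maximaux sont $\ecC$, $\ecC_i$ et les c\^ones non simpliciaux $\ecC_{\indfan\setminus\{i\}}$, classifie ses subdivisions simpliciales par les ordres totaux sur $\indfan$, et \'etablit la projectivit\'e par le crit\`ere de Berchtold--Hausen en exhibant un point commun aux int\'erieurs relatifs des c\^ones duaux de Gale. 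Votre d\'etermination des c\^ones contenant $h$ (aucun c\^one ne contient \`a la fois $g_i$ et $h$, et le lien de $h$ s'identifie \`a l'\'eventail de $\bP^{n-1}$) est \'egalement correcte et redonne les $\ecC_i$.

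En revanche, la partie unicit\'e comporte une lacune r\'eelle, exactement l\`a o\`u se trouve le c\oe ur combinatoire de la d\'emonstration du texte~: la d\'etermination des c\^ones ne contenant pas $h$, autrement dit des subdivisions simpliciales des c\^ones $\ecC_{\indfan\setminus\{i\}}=\cone{\{g_j,f_j\}_{j\neq i}}$. Votre r\'ecurrence repose sur l'affirmation que le lien de $f_i$ est \og combinatoirement analogue\fg\ \`a la configuration de l'\'enonc\'e en dimension inf\'erieure, $g_i$ y jouant le r\^ole de $h$~: c'est inexact. Dans $M^{\vee}/\langle f_i\rangle$ on a $\bar{h}=-\bar{g_i}$ (car $h=f_i-g_i$) et $\sum_{j\neq i}\bar{g_j}=0$ (car $f_i=-\sum_{j\neq i}g_j$)~; la configuration obtenue contient donc une paire de rayons oppos\'es et n'est pas une instance du probl\`eme initial. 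C'est le lien de $g_i$ qui est auto-similaire (l\`a, $\bar{f_i}=\bar{h}=-\sum_{j\neq i}\bar{g_j}$ et $\bar{f_j}=\bar{g_j}+\bar{h}$), mais m\^eme avec ce choix il resterait \`a recoller les unicit\'es \og \`a permutation pr\`es\fg\ des diff\'erents liens en une unique permutation globale de $\{g_i\}$, difficult\'e que vous signalez sans la r\'esoudre. Le texte contourne tout cela en d\'eterminant d'abord les faces de $\ecC_{\jndfan}$ (lemme \ref{lm:calc:rg} et point \ref{item:ii:prop:descr:ev} de la proposition \ref{prop:descr:ev}), en montrant que $\cone{\{g_k,g_{\ell},f_k,f_{\ell}\}}$ n'admet que deux subdivisions simpliciales, puis en construisant de proche en proche l'ordre total qui force les c\^ones maximaux. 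Tant que cette \'etape, ou un substitut, n'est pas men\'ee \`a bien, l'unicit\'e n'est pas d\'emontr\'ee.
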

Si $n$ est le cardinal de $\indfan$, cet \'eventail sera not\'e $\Sigma_n$.
On utilisera dans ce qui suit le lemme \'el\'ementaire suivant~:
\begin{lemme}\label{lm:calc:rg}
Soit $\indfan_1,\indfan_2\subset \indfan$. Alors le rang de la famille $\{(g_i)_{i\in
  \indfan_1}, (f_i)_{i\in \indfan_2}\}$ est $\card{\indfan_1\cup \indfan_2}$ si $\indfan_1\cap \indfan_2=\vide$
et $1+\card{\indfan_1\cup \indfan_2}$ sinon. En particulier cette famille est libre si et
seulement si on a $\card{\indfan_1\cap \indfan_2}=0$ ou $1$.
\end{lemme}
Pour toute partie $A$, on note $\cone{A}$ le c\^one engendr\'e par
cette partie.
Soit 
$
\ecC\eqdef\cone{\{g_i\}_{i\in \indfan}}=\cap_{i\in \indfan}\{x_i\geq 0\} 
$
et, pour $i\in \indfan$,
$\ecC_i\eqdef \cone{\{h\}\cup \{f_j\}_{j\neq i}}$.
On v\'erifie aussit\^ot que ces c\^ones sont simpliciaux et qu'on a, pour $i\in \indfan$,
\begin{equation}\label{eq:eq:cprj}
\ecC_i
=\{\sumu{j\neq i}x_j\leq (n-1)x_i\}\cap \bigcapu{j\neq i} \{x_j\geq x_i\}.
\end{equation}
Pour $\jndfan\subsetneqq \indfan$, on pose $\ecC_{\cJ}\eqdef \cone{\{g_i,f_i\}_{i\in \cJ}}$.

\begin{prop}\label{prop:descr:ev}
\begin{enumerate}
\item\label{item:i:prop:descr:ev}
Soit $i\in \indfan$. On a 
\begin{equation}
\ecC_{\indfan\setminus \{i\}}=\{x_i\leq 0\}\cap \{\sumu{j\neq i}x_j\geq
(n-2)x_i\}\cap \bigcapu{j\neq i} \{x_j\geq x_i\}
\end{equation}
\item\label{item:ii:prop:descr:ev}
Soit $\jndfan\subsetneqq \indfan$. Les faces 
de $\ecC_{\jndfan}$ sont les c\^ones $\ecC_{\kndfan}$, 
$\cone{\{g_i\}_{i\in \kndfan}}$ et $\cone{\{f_i\}_{i\in \kndfan}}$, pour $\kndfan$ d\'ecrivant
les parties de $\jndfan$.
\item\label{item:iii:prop:descr:ev}
Soit $\jndfan\subsetneqq \indfan$ et  $\{1,\dots,\card{\jndfan}\}\isom \jndfan$ 
une num\'erotation de $\jndfan$ (en d'autres termes un ordre total $\prec$ sur $\jndfan$). 
Alors la famille 
\begin{equation}
\scF_{\prec}\eqdef\{\cone{\{f_{i_1},\dots,f_{i_k},g_{i_k},g_{i_{k+1}},\dots,g_{i_{\card{\jndfan}}}\}}\}_{1\leq
  k\leq \card{\jndfan}}
\end{equation}
est l'ensemble des c\^ones maximaux d'une subdivision simpliciale
$\scS_{\prec}$ de $\ecC_{\jndfan}$.
L'application $\prec\mapsto \scS_{\prec}$ est une bijection de
l'ensemble des ordres totaux sur $\jndfan$ sur l'ensemble des subdivisions
simpliciales de $\ecC_{\jndfan}$.
\item\label{item:iv:prop:descr:ev}
Soit $\jndfan\subsetneqq \indfan$, $\kndfan\subset \jndfan$, 
$\prec$ un ordre total sur $\jndfan$ et
$\prec_{|_{\kndfan}}$ l'ordre induit sur $\cK$. Alors la
subdivision simpliciale induite par $\scS_{\prec}$ sur $\ecC_{\cK}$
est $\scS_{\prec_{|_{\kndfan}}}$.
\item\label{item:v:prop:descr:ev}
La famille $\{\ecC\}\cup \{\ecC_{\indfan\setminus \{i\}},\ecC_{i}\}_{i\in \indfan}$
est l'ensemble des c\^ones maximaux d'un \'eventail $\Sigma$ de $M^{\vee}$,
qui est l'\'eventail complet minimal 
dont un ensemble de g\'en\'erateurs des rayons
est $\{h\}\cup \{g_i,f_i\}_{i\in \indfan}$.
\item\label{item:vi:prop:descr:ev}
Soit $\prec$ un ordre total sur $\indfan$.
La famille de c\^ones 
$\{\ecC\}
\cup 
\{\ecC_{i}\}_{i\in \indfan}
\cup 
\cupu{i\in
\indfan} \scF_{\prec_{|_{\indfan\setminus \{i\}}}}$
est l'ensemble des c\^ones maximaux d'un \'eventail simplicial $\Sigma_{\prec}$  de $M^{\vee}$,
qui raffine $\Sigma$. L'application 
$\prec \mapsto \Sigma_{\prec}$ est une bijection de l'ensemble des ordres
totaux sur $\indfan$ sur l'ensemble des subdivisions simpliciales de $\Sigma$.
Pour tout ordre total $\prec$  sur $\indfan$, l'\'eventail $\Sigma_{\prec}$ est projectif et lisse.
\end{enumerate}
\end{prop}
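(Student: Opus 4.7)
Le plan consiste à déduire les six parties d'un paramétrage explicite commun de $\ecC_\cJ$. On travaille dans $M^\vee$ avec la base $\{g_k\}_{k\in \indfan}$ et les coordonnées duales $\{x_k\}$, de sorte que $g_j = e_j$ et $f_j = -\sum_{k\neq j} e_k$. Pour $\cJ \subsetneq \indfan$, un point de $\ecC_\cJ$ écrit $\sum_{j\in\cJ}(\alpha_j\,g_j + \beta_j\,f_j)$ avec $\alpha_j, \beta_j \geq 0$ vérifie $x_k = -z$ pour $k\notin \cJ$, où $z \eqdef \sum_{j\in \cJ}\beta_j$, et $x_k = \alpha_k + \beta_k - z$ pour $k\in\cJ$. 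Un examen direct montre que l'existence de coefficients $\alpha_j, \beta_j \geq 0$ produisant des coordonnées données équivaut à
\begin{equation}\label{eq:param:CJ}
z \geq 0,\quad x_k + z \geq 0\ \text{pour}\ k \in \cJ,\quad \sum_{k\in\cJ} x_k + (\card{\cJ}-1)\,z \geq 0.
\end{equation}
La spécialisation $\cJ = \indfan\setminus \{i\}$, pour laquelle $z = -x_i$, donne \ref{item:i:prop:descr:ev}. Pour \ref{item:ii:prop:descr:ev}, les trois familles d'inégalités dans \eqref{eq:param:CJ} décrivent trois familles de facettes de $\ecC_\cJ$: saturer $z = 0$ force $\beta_j = 0$ et produit $\cone{\{g_i\}_{i\in\cJ}}$; saturer la dernière inégalité force $\alpha_j = 0$ et produit $\cone{\{f_i\}_{i\in\cJ}}$; saturer $x_i + z = 0$ force $\alpha_i = \beta_i = 0$ et produit $\ecC_{\cJ\setminus\{i\}}$. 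Intersecter ces facettes donne, par analyse directe, toutes les faces annoncées.

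Le coeur combinatoire est \ref{item:iii:prop:descr:ev}. L'identité clé $g_{j_1} + f_{j_2} = g_{j_2} + f_{j_1}$ (les deux membres valent $e_{j_1} + e_{j_2} - \sum_{k\in\indfan} e_k$) force toute relation linéaire minimale entre rayons de $\ecC_\cJ$ à faire intervenir deux paires complètes $\{g_j, f_j\}$. Puisque $\dim(\ecC_\cJ) = \card{\cJ} + 1$ (lemme \ref{lm:calc:rg}), toute famille libre maximale de $\card{\cJ}+1$ rayons contient nécessairement exactement une paire complète $\{g_{i_0}, f_{i_0}\}$ et, pour chaque $j \in \cJ\setminus\{i_0\}$, exactement un des deux rayons $g_j, f_j$. À un ordre total $\prec$ sur $\cJ$ d'énumération croissante $i_1, \ldots, i_{\card{\cJ}}$ on associe $\scF_\prec$ en faisant varier l'indice $k$ de la paire complète, en prenant $f_{i_l}$ pour $l<k$ et $g_{i_l}$ pour $l>k$. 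Je procéderais par récurrence sur $\card{\cJ}$: le recouvrement résulte d'un algorithme de redistribution des poids d'un point $p\in\ecC_\cJ$ via la relation ci-dessus, qui mène à une forme où $\alpha_{i_l} > 0$ seulement pour $l\geq k$ et $\beta_{i_l} > 0$ seulement pour $l \leq k$; la compatibilité aux intersections se lit directement sur les listes de rayons (cônes adjacents partageant une facette, autres cônes une face de dimension inférieure). Réciproquement, dans une subdivision simpliciale arbitraire, la facette $\cone{\{f_i\}_{i\in\cJ}}$ étant déjà simpliciale n'est pas subdivisée, ce qui impose la structure du cône de la subdivision la contenant; en l'épluchant on se ramène à une subdivision d'un $\ecC_{\cJ'}$ plus petit et l'ordre se reconstruit. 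Le point \ref{item:iv:prop:descr:ev} en découle immédiatement puisque les cônes maximaux de $\scS_\prec$ dont tous les rayons sont dans $\ecC_\cK$ correspondent exactement aux valeurs $i_k\in\cK$.

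Pour \ref{item:v:prop:descr:ev}, la description par inégalités \eqref{eq:eq:cprj} et \ref{item:i:prop:descr:ev} partitionne $M^\vee$: $\ecC$ correspond à tous les $x_k \geq 0$, et les $\ecC_i, \ecC_{\indfan\setminus\{i\}}$ correspondent au cas où $x_i$ est la coordonnée minimale (strictement négative), selon le placement vis-à-vis de la condition de somme; les intersections deux à deux produisent des faces communes via \ref{item:ii:prop:descr:ev}, et la minimalité vient de ce que supprimer un rayon laisse une facette non supportée. Pour \ref{item:vi:prop:descr:ev}, subdiviser chaque $\ecC_{\indfan\setminus \{i\}}$ par $\scS_{\prec\vert_{\indfan\setminus\{i\}}}$ produit un éventail $\Sigma_\prec$, le recollement étant assuré par \ref{item:iv:prop:descr:ev}. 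La lissité se ramène à vérifier que les générateurs des rayons de chaque cône maximal forment une $\bZ$-base de $M^\vee$; pour $\ecC_i = \cone{\{h\}\cup\{f_j\}_{j\neq i}}$ comme pour les cônes de $\scF_{\prec\vert_{\indfan\setminus\{i\}}}$, les matrices correspondantes sont, après réordonnancement, triangulaires de déterminant $\pm 1$. La projectivité s'établira par construction d'une fonction d'appui linéaire par morceaux strictement convexe sur $\Sigma_\prec$, définie par induction le long de l'ordre $\prec$.

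Les principaux obstacles seront la réciproque dans \ref{item:iii:prop:descr:ev} (reconstruction de l'ordre à partir d'une subdivision simpliciale arbitraire, qui nécessite une récurrence soigneuse guidée par les facettes identifiées dans \ref{item:ii:prop:descr:ev}) et la construction explicite d'une fonction d'appui strictement convexe pour établir la projectivité de $\Sigma_\prec$ dans \ref{item:vi:prop:descr:ev}.
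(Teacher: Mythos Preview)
Your approach is essentially correct and largely parallel to the paper's, with two genuine differences worth noting.

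First, your unified parametrisation \eqref{eq:param:CJ} of $\ecC_{\cJ}$ is a pleasant alternative to the paper's treatment of \ref{item:i:prop:descr:ev}--\ref{item:ii:prop:descr:ev}: the paper verifies the inclusion $\subset$ directly and for the reverse inclusion exhibits, for any $x$ in the right-hand cone, an explicit index $k\neq i$ with $x_k\geq \frac{n-2}{n-1}x_i$ and an explicit decomposition of $x$ on $\{g_j,f_j\}_{j\neq i}$; it then reads off the facets by intersecting with the supporting hyperplanes and proceeds by induction on $\card{\cJ}$. Your parametric description yields the same facets more uniformly.

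Second, for projectivity in \ref{item:vi:prop:descr:ev} you propose to build a strictly convex support function by induction along $\prec$. The paper instead passes to the Gale-dual cones $\wt{\ecC}$, $\wt{\ecC_i}$, $\wt{\ecC_{i,j}}$ in $K$ and invokes \cite[Corollary 9.3]{BerHau:bun}: projectivity follows once one exhibits a point in the intersection of all their relative interiors, and the paper gives one explicitly, namely $y_0>0$ and $y_i=\frac{n-1}{n}y_0+(n-i)\eps$ for small $\eps>0$. This is shorter and avoids the bookkeeping your inductive construction would require.

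One caution on your recursion for \ref{item:iii:prop:descr:ev}: after identifying the maximal cone containing the facet $\cone{\{f_i\}_{i\in\cJ}}$ and ``peeling it off'', what remains is not literally a subdivision of some $\ecC_{\cJ'}$ with $\card{\cJ'}<\card{\cJ}$, so the induction is not as clean as you suggest. The paper handles this by building the numbering $i_1,\dots,i_{\card{\cJ}}$ step by step (starting from the opposite facet $\cone{\{g_i\}_{i\in\cJ}}$), using the $\card{\cJ}=2$ case at each step to rule out the wrong choice of next ray, and then arguing separately that no extraneous maximal cone can appear. Your idea is dual to this and will work, but the inductive step needs to be phrased as ``determine the next index and continue inside the same $\ecC_{\cJ}$'' rather than as a descent to a smaller $\cJ$.
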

\begin{proof}
On v\'erifie aussit\^ot l'inclusion 
\begin{equation}
\ecC_{\indfan\setminus \{i\}}\subset 
\{\sumu{j\neq i}x_j\leq (n-1)x_i\}\cap \bigcapu{j\neq i} \{x_j\geq x_i\}.
\end{equation}
R\'eciproquement, soit $x$ un \'el\'ement du c\^one de droite.
Il existe alors un $k\neq i$ tel qu'on ait
$x_k\geq \frac{n-2}{n-1}\,x_i$, et on a
\begin{equation}
x=
\left(x_k-\frac{n-2}{n-1}x_i\right)\,g_k
+\sum_{j\neq k,i}(x_j-x_i)\,g_j
-\frac{x_i}{n-1}\sum_{j\neq k,i}f_j\in \cone{\{g_j,f_j\}_{j\neq i}}.
\end{equation}
Le point \ref{item:i:prop:descr:ev} est donc d\'emontr\'e.

On constate aussit\^ot les \'egalit\'es 
\begin{equation}
\ecC_{\indfan\setminus \{i\}}\cap \{x_i=0\}=\cone{\{g_j\}_{j\neq i}}
\end{equation}
\begin{equation}
\ecC_{\indfan\setminus \{i\}}\cap \{\sumu{j\neq i}x_j=(n-1)x_i\}=\cone{\{f_j\}_{j\neq i}}
\end{equation}
\begin{equation}
\text{et}\quad\ecC_{\indfan\setminus \{i\}}\cap \{x_k=x_i\}=\cone{\{g_j,f_j\}_{j\notin \{i,k\}}}.
\end{equation}
Ces c\^ones \'etant de dimension $\card{\indfan}-1$ (les deux premiers sont simplicaux,
pour le dernier cela d\'ecoule du lemme \ref{lm:calc:rg}), 
on obtient ainsi, toujours d'apr\`es le lemme \ref{lm:calc:rg}, toutes les facettes de $\ecC_{\indfan\setminus \{i\}}$.
Le point \ref{item:ii:prop:descr:ev} en d\'ecoule en utilisant une
r\'ecurrence sur le cardinal de $\indfan$.

Soit $\jndfan\subsetneqq \indfan$.
Supposons d'abord que $\jndfan=\{k,\ell\}$ avec $k\neq \ell$. 
D'apr\`es le point \ref{item:ii:prop:descr:ev}
les facettes de
$\ecC_{\jndfan}$ sont alors $\cone{\{g_k,g_{\ell}\}}$,
$\cone{\{f_k,f_{\ell}\}}$,
$\cone{\{f_k,g_k\}}$
et $\cone{\{g_{\ell},f_{\ell}\}}$. Compte tenu du lemme
\ref{lm:calc:rg}, on en d\'eduit aussit\^ot 
l'assertion dans ce cas-l\`a.
Passons au cas g\'en\'eral. Soit $\scS$ 
une subdivision simpliciale de
$\ecC_{\jndfan}$. 
D'apr\`es le lemme \ref{lm:calc:rg}, tout c\^one maximal de $\scS$ s'\'ecrit 
$\cone{\{f_k,g_{\ell}\}_{k\in \jndfan_1,\ell\in \jndfan_2}}$ avec
$\jndfan_1\cup \jndfan_2=\jndfan$ 
et $\card{\jndfan_1\cap \jndfan_2}=1$.
Comme $\cone{\{g_i\}_{i\in \jndfan}}$ est une face de $\ecC_{\jndfan}$,
c'est \'egalement une face d'un c\^one maximal de $\scS$.
Un tel c\^one s'\'ecrit n\'ecessairement $\cone{\{g_i\}_{i\in \jndfan}\cup \{f_{i_0}\}}$
pour un certain $i_0\in \jndfan$. 
Supposons avoir
construit $r$ \'el\'ements $i_0,i_1,\dots, i_r$ de $\jndfan$   tels que $\scS$
contienne les c\^ones
$\cone{\{f_{i_0},f_{i_1},\dots,f_{i_k},g_{i_k}\}\cup\{g_i\}_{i\in \jndfan\setminus \{i_0,\dots,i_k\}}}$ 
pour $1\leq k\leq r$.
Si on a $r<\card{\indfan}$, le c\^one
\begin{equation}\label{eq:cone:inter}
\cone{\{f_{i_0},f_{i_1},\dots,f_{i_r},g_{i_r}\}\cup\{g_i\}_{i\in \jndfan\setminus \{i_0,\dots,i_k\}}}
\end{equation}
a pour face
$\cone{\{f_{i_0},\dots,f_{i_r}\}\cup\{g_i\}_{\jndfan\setminus\{i_0,\dots,i_r\}}}$, 
laquelle n'est incluse dans aucune des faces de $\ecC_{\jndfan}$. Il existe
donc un autre c\^one maximal de $\scS$ ayant
pour face le c\^one \eqref{eq:cone:inter}. Un tel c\^one maximal s'\'ecrit donc soit 
\begin{equation}
\cone{\{f_{i_0},\dots,f_{i_r},f_k\}\cup\{g_i\}_{i\in \jndfan\setminus\{i_0,\dots,i_r\}}}
\end{equation}
pour un $k\in \jndfan\setminus \{i_0,\dots,i_r\}$, soit
\begin{equation}
\cone{\{f_{i_0},\dots,f_{i_r}\}\cup
  \{g_i\}_{i\in \jndfan\setminus \{i_0,\dots,i_r\}}\cup \{g_k\}}
\end{equation}
pour un $k\in \jndfan\setminus \{i_0,\dots,i_r\}$. 
Mais cette derni\`ere possibilit\'e est exclue, car elle entra\^\i nerait que
les c\^ones $\cone{\{g_k,f_k,f_{i_r}\}}$ et $\cone{\{g_{i_r},f_k,f_{i_r}\}}$ 
sont deux c\^ones d'une m\^eme subdivision simpliciale de
$\cone{\{g_k,g_{i_r},f_k,f_{i_r}\}}$, ce qui contredirait le r\'esultat dans le cas $\card{\jndfan}=2$.
De proche en proche, on construit donc une num\'erotation
$i_0,\dots,i_{\card{\jndfan}}$ de $\jndfan$ telle 
que les c\^ones
\begin{equation}
\left\{
\cone{\{f_{i_0},f_{i_1},\dots,f_{i_k},g_{i_k},g_{i_{k+1}},\dots,g_{i_{\card{\jndfan}}}\}}
\right\}_{1\leq k\leq \card{\jndfan}}
\end{equation}
sont des c\^ones maximaux de $\scS$.
Supposons que $\scS$ contienne un c\^one maximal qui n'est pas dans
cette liste~: il existe alors $1\leq \ell<k\leq n$ tel que ce c\^one
 ait pour face $\cone{\{f_{i_k},g_{i_k},g_{i_{\ell}}\}}$.
Alors les c\^ones $\cone{\{f_{i_k},g_{i_k},g_{i_{\ell}}\}}$ et $\cone{\{f_{i_{\ell}},g_{i_{\ell}},g_{i_k}\}}$ 
sont deux c\^ones d'une m\^eme subdivision simpliciale de
$\cone{\{f_{i_k},f_{i_{\ell}},g_{i_k},g_{i_{\ell}}\}}$~: contradiction.
Compte tenu du fait qu'il existe au moins une subdivision simpliciale
de $\ecC_K$ et de la sym\'etrie du probl\`eme, le point
\ref{item:iii:prop:descr:ev} est d\'emontr\'e. Le point \ref{item:iv:prop:descr:ev}
est alors imm\'ediat.

D\'emontrons le point \ref{item:v:prop:descr:ev}.
Montrons d'abord que les c\^ones consid\'er\'es recouvrent $M^{\vee}\otimes {\bR}$.
Soit $x\in M^{\vee}\otimes {\bR}$. Soit $i$ tel que 
$x_i=\Min_{j\in \indfan} (x_j)$. 
Si on a $x_i\geq 0$, $x$ est dans $\ecC$. Sinon, d'apr\`es \eqref{eq:eq:cprj}
et le point \ref{item:i:prop:descr:ev},
$x$ est
dans $\ecC_{\indfan\setminus \{i\}}$ ou $\ecC_{i}$ selon le signe de
$\sum_{j\neq i}x_j-(n-1)x_i$. 

Par ailleurs, pour tout $i$ dans $\indfan$, l'hyperplan $\{x_i=0\}$
s\'epare $\ecC$ et $\ecC_{\indfan\setminus \{i\}}$ d'une part, $\ecC$ et $\ecC_{i}$ d'autre part.
Pour $i,k$ \'el\'ements distincts de $\indfan$, l'hyperplan $\{x_i=x_k\}$
s\'epare respectivement $\ecC_{\indfan \setminus \{i\}}$ et $\ecC_{\indfan\setminus \{k\}}$, $\ecC_{i}$ et
$\ecC_{k}$, $\ecC_{\indfan\setminus \{i\}}$ et $\ecC_{k}$. Enfin pour $i\in \indfan$, l'hyperplan
$\{\sumu{j\neq i}x_j=(n-1)x_i\}$ s\'epare $\ecC_{\indfan\setminus \{i\}}$ et $\ecC_{i}$.
Ceci ach\`eve la d\'emonstration du point \ref{item:v:prop:descr:ev}.
Le point \ref{item:vi:prop:descr:ev} d\'ecoule alors des points \ref{item:iii:prop:descr:ev}
et \ref{item:iv:prop:descr:ev}, exception faite de la lissit\'e de
l'\'eventail $\Sigma_{\prec}$ qui se v\'erifie aussit\^ot et de sa projectivit\'e.
Il suffit de
montrer cette derni\`ere propri\'et\'e pour un ordre total
$\indfan\isom\{1,\dots,n\}$, o\`u $n$ est le cardinal de $\indfan$.
On consid\`ere les c\^ones r\'eguliers de $K$ obtenus par dualit\'e de Gale
(\cf \cite[Section 3]{BerHau:bun}) \`a partir des
c\^ones maximaux de $\Sigma_{\prec}$~: 
\begin{equation}
\wt{\ecC}\eqdef\cone{\{\scF_i\}_{i\in \indfan},\scF_0}
\end{equation}
\begin{equation}
\wt{\ecC_i}\eqdef\cone{\{\scG_j\}_{j\in \indfan}\cup\scF_i\}}\quad i\in \indfan
\end{equation}
\begin{equation}
\text{et }\wt{\ecC_{i,j}}\eqdef\cone{\{\scF_i\}_{\substack{1\leq k\leq i-1\\k\neq j}}\cup
  \{\scG_k\}_{\substack{i+1\leq k\leq n\\k\neq j}}\cup\{\scF_j,\scG_j,\scF_0\}}
\quad i,j\in \indfan,\,i\neq j
\end{equation}
D'apr\`es \cite[Corollary 9.3]{BerHau:bun}, 
pour montrer que $\Sigma_{\prec}$ est projectif,
il suffit de montrer que les int\'erieurs relatifs de ces c\^ones 
ont une intersection non vide. 
On d\'etermine facilement les \'equations des int\'erieurs relatifs de ces c\^ones~:
\begin{equation}\label{eq:int:c}
\intrel(\wt{\ecC})=\capu{0\leq i \leq n} \{y_i>0\}
\end{equation}
\begin{equation}\label{eq:int:cpi:1}
\intrel(\wt{\ecC_i})
=
\{\sum_{1\leq j\leq n}y_j>(n-1)\,y_0
\}
\bigcap
\{
\sum_{\substack{
1\leq j\leq n
\\
j\neq i}}
y_j
>(n-2)\,y_0
\}
\bigcap
\capu{\substack{0\leq j \leq n,\\j\neq i}}
\{y_j<z\}
\end{equation}
\begin{multline} \label{eq:int:cpij:1}
\text{et }\intrel(\wt{\ecC_{i,j}})=
\capu{\substack{1\leq k\leq i-1,\\ k\neq j}}
\{y_i<y_k\}
\bigcap
 \capu{\substack{i+1\leq k\leq n,\\ k\neq j}}
\{y_i>y_k\}
\\
\bigcap
\{
(n-2-i)\,y_i<
\sum_{\substack{
i+1\leq k\leq n
\\
k\neq j}}
y_k
\}
\bigcap
\{
(n-1-i)\,y_i<
y_j+
\sum_{\substack{
i+1\leq k\leq n
\\
k\neq j}}
y_k
\}
\end{multline}
Soit $y_0>0$ et $\eps>0$. Pour $1\leq i\leq n$, posons $y_i\eqdef \frac{n-1}{n}\,y_0+(n-i)\,\eps$.
Alors pour $\eps>0$ assez petit on v\'erifie que $(y_0,y_1,\dots,y_n)$
est bien dans l'intersection de ces int\'erieurs relatifs.
\end{proof}
\subsection{Construction de vari\'et\'es d'anneau de Cox donn\'e}\label{subsec:BerHau}
Nous rappelons dans cette sous-section un r\'esultat de l'article
\cite{BerHau:Cox}, dont nous suivrons autant que faire se peut les notations.
Soit $k$ un corps, $\indsec$ un ensemble fini non vide
et $R$ un quotient de l'anneau de polyn\^omes $k[u_i]_{i\in \indsec}$. 
On suppose qu'on a $R^{\inv}=k^{\inv}$, que $R$ est factoriel, fid\`element
gradu\'ee par un $\bZ$-module libre de rang fini $K$, et  
que les images $\bar{u_i}$ des $u_i$ dans $R$ sont des \'el\'ements $K$-homog\`enes premiers non
deux \`a deux associ\'es. On note $\mfF\eqdef\{\bar{u_i}\}_{i\in \indsec}$.
L'application $(e_i)\mapsto \deg(\bar{u_i})$ induit
une suite exacte de $\bZ$-modules libres de rang fini
\begin{equation}
0\to M\overset{\iota}{\to}\bZ^{\,\indsec}\overset{\pi}{\to}K\to 0.
\end{equation}
On note $\gamma$ le c\^one $\bR_{\geq 0}^{\indsec}$ et, pour
$\jndsec\subset \indsec$, 
$\gamma_{\jndsec}$
la face de $\gamma$ engendr\'ee par les $\{e_i\}_{i\in \jndsec}$~; on identifiera
dans la suite l'ensemble des faces du c\^one $\gamma$ \`a l'ensemble des parties de $\indsec$.
Ainsi $(\bZ^{\indsec}\overset{\pi}{\to}K,\gamma)$ est le c\^one projet\'e
associ\'e au syst\`eme de g\'en\'erateurs $\mfF$ (au sens de \cite[p.1212]{BerHau:Cox}).
On suppose qu'il existe un  \'eventail $\Sigma$ de $M^{\vee}$ projectif et lisse
 dont les rayons sont les $\iota^{\vee}(e_i^{\vee})$.
Soit $\ecP_{\Sigma}$ l'ensemble des parties maximales $\jndsec$ de $\indsec$
telles que $\iota^{\vee}(\sum_{i\in \jndsec}\bR_{\geq 0}e_i^{\vee})$ est un c\^one maximal de
$\Sigma$. L'ensemble  $\{\indsec\setminus \jndsec\}_{\jndsec\in \ecP_{\Sigma}}$ forme alors la collection
couvrante $\Cov(\Theta_{\Sigma})$ d'une grappe $\Theta_{\Sigma}$ dans le c\^one projet\'e
$(\bZ^{\,\indsec}\overset{\pi}{\to}K,\bR_{\geq 0}^{\,\indsec})$, au sens de
\cite[p. 1209]{BerHau:Cox}. La grappe  $\Theta_{\Sigma}$ elle-m\^eme
est l'ensemble des c\^ones minimaux parmi les $\{\pi(\gamma_{\jndsec})\}_{\jndsec\in
  \Cov(\Theta_{\Sigma})}$. L'ensemble $\Rlv(\Theta_{\Sigma})$ est l'ensemble des parties de
$\indsec$ contenant un \'el\'ement de $\Cov(\Theta_{\Sigma})$. Ainsi $\Theta_{\Sigma}$ est aussi l'ensemble
des c\^ones minimaux parmi les $\{\pi(\gamma_{\jndsec})\}_{\jndsec\in   \Rlv(\Theta_{\Sigma})}$.

Pour $\jndsec\subset \indsec$, on pose 
$
\bA^{\,\indsec}_{\jndsec}\eqdef\{(u_i)\in \bA^{\,\indsec},\quad u_i\neq 0 \eq i\in \jndsec\}
$
et $\ecZ(\jndsec)\eqdef\bA^{\indsec}_{\jndsec}\cap \Spec(R)$. Une partie $\jndsec$ de
$\indsec$ est 
par d\'efinition une $\mfF$-face
 si $\ecZ(\jndsec)$ est non vide. 
L'ensemble des \'el\'ements de $\Rlv(\Theta_{\Sigma})$ qui sont des $\mfF$-face est l'ensemble $\Rlv(\Phi_{\mfF,\Sigma})$ des parties
pertinentes  d'une $\mfF$-grappe $\Phi_{\mfF,\Sigma}$ dans le c\^one projet\'e 
associ\'e \`a $\mfF$ \'etendue par $\Theta_{\Sigma}$ (au sens de \cite[Definition 2.3 et Definition 3.3]{BerHau:Cox}).
La $\mfF$-grappe $\Phi_{\mfF,\Sigma}$ elle-m\^eme est l'ensemble des c\^ones minimaux parmi les 
$\{\pi(\gamma_{\jndsec})\}_{\jndsec\in \Rlv(\Phi_{\mfF,\Sigma})}$.
L'ensemble $\Cov(\Phi_{\mfF,\Sigma})$ est l'ensemble des parties minimales de $\Rlv(\Phi_{\mfF,\Sigma})$.
Ainsi la $\mfF$-grappe $\Phi_{\mfF,\Sigma}$ est l'ensemble des c\^ones minimaux parmi les 
$\{\pi(\gamma_{\jndsec})\}_{\jndsec\in \Phi_{\mfF,\Sigma}}$. On notera qu'on
a  trivialement 
$\Cov(\Theta_{\Sigma})\subset \Rlv(\Phi_{\mfF,\Sigma})$. 
On pose alors 
\begin{equation}
\hat{X}_{\mfF,\Sigma}\eqdef\cupu{\jndsec\in
  \Rlv(\Phi_{\mfF,\Sigma})}\ecZ(\jndsec)
=
\cupu{\jndsec\in \Rlv(\Theta_{\Sigma})}\ecZ(\jndsec)\subset \bA^{\,\indsec}.
\end{equation}
\begin{thm}[Berchtold-Hausen]\label{thm:BerHau}
On conserve les hypoth\`eses et notations pr\'ec\'edentes.
On suppose que $\hat{X}_{\mfF,\Sigma}$ est lisse.
Il existe alors une vari\'et\'e $X_{\mfF,\Sigma}$ projective et lisse,
un isomorphisme $\Pic(X_{\mfF,\Sigma})\isom K$ et un isomorphisme
$\Cox(X_{\mfF,\Sigma})\isom R$
compatibles aux graduations de $\Cox(X_{\mfF,\Sigma})$ et $R$ par
$\Pic(X_{\mfF,\Sigma})$ et $K$, respectivement.
La vari\'et\'e $X_{\mfF,\Sigma}$ s'obtient comme le quotient g\'eom\'etrique
de $\hat{X}_{\mfF,\Sigma}$ sous l'action naturelle du tore $\Hom(K,\bG_m)$.
\end{thm}
Ceci d\'ecoule de \cite[Proposition 3.2, Theorem 4.2 et Proposition 5.6]{BerHau:Cox}.

\begin{rem}\label{rem:incidence}
On conserve les notations pr\'ec\'edentes.
Pour $i\in \indsec$, soit $\scE_i$ le diviseur de $X_{\mfF,\Sigma}$
donn\'e par l'annulation de la section $u_i$. Les relations d'incidence
des diviseurs $\scE_i$ se retrouvent ais\'ement \`a partir des donn\'ees ci-dessus.
Plus pr\'ecis\'ement, pour toute partie $\jndsec$ de $\indsec$, on a
$\cap_{i\in \jndsec}\,\scE_i\neq \vide$ si et seulement si 
il existe un \'el\'ement de $\Rlv(\Phi_{\mfF,\Sigma})$ disjoint de
$\jndsec$ 
si et seulement si 
il existe $\jndsec\subset\kndsec$ tel 
$\iota^{\vee}(\sum_{i\in \kndsec}\bR_{\geq 0}e_i^{\vee})$ est un c\^one
maximal de $\Sigma$ et il existe $(u_i)\in \Spec(R)$ tel que $u_i=0$
pour $i\in \jndsec$ et $\prod_{i\notin \kndsec}u_i\neq 0$
\end{rem}

\subsection{Le r\'esultat principal}\label{subsec:princ}
On conserve les notations  de la sous-section 
\ref{subsec:BerHau}.
Soit $n\geq 3$ un entier et
soit $R_n\eqdef k[\{s_i,t_i\}_{1\leq i\leq n},s_0]/\sum_{1\leq i\leq n} s_i\,t_i$. 
Soit $\mfF_n$ l'ensemble des images des \'el\'ements
$s_0,\{s_i,t_i\}_{1\leq i\leq n}$ dans $R_n$.
Soit $K_n$ le $\bZ$-module libre de base $(\{\scF_i\}_{0\leq i\leq n})$
On d\'efinit une $K$-graduation fid\`ele sur $R$ en posant $\deg(s_0)=\scF_0$ et,
pour $1\leq i\leq n$, $\deg(s_i)=\scF_i$ et
$\deg(t_i)=\scG_i\eqdef-\scF_i+\scF_0+\sum_{1\leq j \leq n}\scF_j$.
Soit $X_n$ la vari\'et\'e $X_{\mfF_n,\Sigma_n}$ o\`u $\Sigma_n$ est l'\'eventail de la proposition
On v\'erifie ais\'ement \`a partir de la description de $\Sigma_n$ 
et de l'\'equation d\'efinissant $R_n$ que la vari\'et\'e $\hat{X}_{\mfF_n,\Sigma_n}$ est lisse.
D'apr\`es le th\'eor\`eme \ref{thm:BerHau}, $X_n$ est une vari\'et\'e projective et lisse d'anneau de
Cox isomorphe \`a $R_n$ et de groupe de Picard isomorphe \`a $K_n$, tel
que le diviseur de $s_i$ est $\scF_i$ et le diviseur de $t_i$ est $\scG_i$.
On notera que $X_n$ est de dimension $n-1$. 
\begin{rem}\label{rem:inc:xn}
On v\'erifie ais\'ement, \`a partir de la remarque \ref{rem:incidence} que
l'intersection $\cap_{1\leq i\leq n}\scG_i$ est non vide
et que l'intersection $\cap_{1\leq i\leq n}\scF_i\cap \scG_i$ est vide.
\end{rem}
\begin{rem}\label{rem:comp}
D'apr\`es \cite{Has:eq:ut:cox:rings}, $X_3$ est isomorphe au plan projectif
\'eclat\'e en trois points align\'es, et est donc une compactification
\'equivariante de l'espace affine $\bG_a^2$. Plus g\'en\'eralement, $X_n$
est une compactification \'equivariante de $\bG_a^{n-1}$~: 
munissons en effet $\bP^{n-1}$ de coordonn\'ees homog\`enes
$(x_0:\dots:x_{n-1})$ et de l'action alg\'ebrique de $\bG_a^{n-1}$ qui est l'action
par translation sur $\{x_0\neq 0\}\isom \bG_a^{n-1}$ et est triviale sur $\{x_0=0\}$~;
on v\'erifie ais\'ement que le morphisme 
\beq
(s_0,\dots,s_{n},t_1,\dots,t_n)\mapsto (\prod_{0\leq i\leq n}s_i,s_1\,t_1,\dots,s_n\,t_n)
\eeq
induit par passage au quotient un morphisme $\pi_n\,:\,X_n\to \bP^{n-1}$ (qui
n'est autre que le morphisme d\'efini par la base $\{\prod_{0\leq i\leq n}s_i,s_1\,t_1,\dots,s_{n-1}\,t_{n-1}\}$ 
de $\struct{X_n}(\scF_0+\dots+\scF_n)$) et que $\pi_n$ induit un
isomorphisme de $X_n\setminus \cup_{0\leq i\leq n}\,\scF_i$ sur $\{x_0\neq
0\}$~; par ailleurs on v\'erifie que le morphisme
\begin{multline}
(\lambda_1,\dots,\lambda_{n-1})\,(s_0,\dots,s_{n},t_1,\dots,t_n)
\\
\mapsto 
(s_0,\dots,s_n,t_1+\lambda_1\prod_{i\neq
  1}s_i,\dots,t_{n-1}+\lambda_{n-1}\prod_{i\neq n-1}s_i,t_n-(\lambda_1+\dots+\lambda_{n-1})\,\prod_{i\neq n}s_i)
\end{multline}
induit une action de $\bG_a^{n-1}$ sur $X_n$, pour laquelle le morphisme $\pi_n$
est \'equivariant.
\end{rem}

\begin{thm}\label{thm:princ}
On conserve les notations pr\'ec\'edentes. 
Soit $n\geq 3$. 
Soit $Z_{X_n,-\can{X_n},X_{n,0}}(t)$ la fonction z\^eta des hauteurs
anticanonique associ\'ee \`a l'ouvert 
$X_{n,0}\eqdef X_n\setminus 
\cup_{0\leq i\leq n}\,\scF_i
\cup_{1\leq i\leq n}\,\scG_i$.
Alors la s\'erie
\begin{equation}
Z_{X_n,-\can{X_n},X_{n,0}}(t)-
\gamma(X_n)\!\!
\sum_{y\in \ceff(X_n)^{\vee}\cap \Pic(X_n)^{\vee}}\,(q\,t)^{\,\acc{y}{-\can{X_n}}}
\end{equation}
est $q^{-1}$-contr\^ol\'ee \`a l'ordre $\rg(\Pic(X_n))-1$.
\end{thm}
\begin{proof}
On veut appliquer le th\'eor\`eme \ref{thm:synth}. 
Remarquons que $\ceff(X_n)$ est simplicial, engendr\'e par la base
$\{\scF_i\}_{0\leq i\leq n}$.
Pour $1\leq i\leq n-1$, on a 
\begin{equation}
\scG_{i}+\scG_{i+1}-\Dtot=\sum_{\substack{0\leq j\leq n\\ j\notin\{i,i+1\}}}\scF_j\in \ceff(X_n)\setminus \{0\}
\end{equation}
et donc l'hypoth\`ese \ref{hyp:pos} vaut.
Par ailleurs on a
\begin{equation}
\frac{1}{n-1}\sum_{1\leq i\leq
  n}\scG_i-\Dtot=\frac{n}{n-1}\scF_0+\sum_{1\leq i\leq
  n}\scF_i-\sum_{0\leq i\leq n}\scF_i=\frac{1}{n-1}\scF_0\in
\ceff(X_n)\setminus \{0\}.
\end{equation}
D'apr\`es la remarque \ref{rem:hyp:inter} et le fait que l'intersection
des diviseurs $\{\scG_i\}_{1\leq i\leq n}$ est non vide, l'hypoth\`ese \ref{hyp:zerr2:bis}
est donc satisfaite.

Montrons \`a pr\'esent que l'hypoth\`ese \ref{hyp:conv:cEeps} est v\'erifi\'ee.
En reprenant les notations de la sous-section
\ref{subsubsec:estim:fonc1}, on a ici
\begin{equation}
F_{\rho,\be}(\bt)
=
\sum_{\bd\in \bN^{n+1}}
\rho^{\Infu{1\leq i\leq n}(d_i+f_i+g_i)}
\bt^{\,\bd}.
\end{equation}
D'apr\`es \cite[Proposition 3.2]{Bou:compt}, on a
\begin{equation}
\wt{F}_{\rho,0}=\frac{1-\produ{1\leq i\leq
    n}t_i}{1-\rho\,\produ{1\leq i\leq n}t_i}
\quad
\text{et}
\quad
\wt{F}_{\rho,\be}=\frac{G_{\rho,\be}(\bt)}{1-\rho\,\produ{1\leq i\leq n}\,t_i}
\end{equation}
o\`u $G_{\rho,\be}(\bt)=\sum Q_{\be,\bd}(\rho)\,\bt^{\bd}$ est un polyn\^ome en
$\bt$ avec la propri\'et\'e que le degr\'e de $Q_{\be,\bd}(\rho)$ est major\'e
par $\Min(d_i+f_i+g_i)$. 
On en d\'eduit en particulier que l'hypoth\`ese \ref{hyp:conv:sergen}
est satisfaite.

D'apr\`es le lemme \ref{lm:majem} ci-dessous, on
a pour tout $\be=(\bbf,\bg)\in \{0,1\}^{2\,n+1}$ et tout $\bd$ la majoration
\beq
\Min(d_i+f_i+g_i)\leq \abs{\bd}+\Min(f_i+g_i).
\eeq

D'apr\`es la remarque \ref{rem:hyp:conv:cEeps},
il suffit pour v\'erifier l'hypoth\`ese \ref{hyp:conv:cEeps} de montrer
que si $\be\in \{0,1\}^{2\,n+1}$ est non nul et tel que 
$\mu_{X_n}^0(\be)$ est non nul alors
 on a 
\begin{equation}\label{eq:maj:a:verif}
\Minu{1\leq i\leq
    n}(f_i+g_i)-\sum_{1\leq i\leq n} (f_i+g_i)-f_0\leq -2.
\end{equation}
Mais comme on a $n\geq 3$, \eqref{eq:maj:a:verif} est v\'erifi\'ee 
sauf dans la situation suivante~: tous les  $f_i+g_i$ sont nuls (et
dans ce cas $f_0=1$) sauf peut-\^etre un qui vaut $1$
(et dans ce cas $f_0=0$).
D'apr\`es la proposition \ref{prop:mu}, cette situation entra\^\i ne que $\mu^0_{X_n}(\bbf,\bg)$ est nul.
Ainsi l'hypoth\`ese \ref{hyp:conv:cEeps} est v\'erifi\'ee.

Montrons finalement que l'hypoth\`ese \ref{hyp:rel:term:princ} est v\'erifi\'ee.
Pour $(\bbf,\bg)\in \{0,1\}^{2\,n+1}$ posons, comme dans
\cite{Bou:compt},
\begin{multline}
\fact_{X_n,v}(\bbf,\bg)
\eqdef
q_v^{-\sumu{0\leq i\leq n} f_i-\sumu{1\leq i\leq n} g_i}
\wt{F}_{q_v,\bbf,\bg}(q_v^{-1})
\\
=(1-q_v^{-1})^n\,q_v^{\,-f_0}\,
\sum_{\substack{\bd\in \bN^n\\ d_i\geq f_i+g_i}}
q_v^{\Min(d_i)}\,q_v^{-\sumu{1\leq i\leq n}d_i},
\end{multline}
\begin{equation}
\kappa_v^{\bbf,\bg}\eqdef
\{(x_i)_{0\leq i\leq n},(y_i)_{1\leq i\leq n}\in \kappa_v^{2\,n+1},\quad
\forall i,\quad x_{i}=0\,\text{ si }\,f_{i}=1\text{ et }
y_{i}=0\,\text{ si }\,g_{i}=1
\},
\end{equation}
\begin{equation}
\text{et}\quad
\dens_{X_n,v}(\bbf,\bg)
\eqdef
\frac{
\card{
\{((x_i)(y_i))\in \kappa_v^{\bbf,\bg},\quad \sum_{1\leq i\leq n}x_i\,y_i=0\}
}}
{
q_v^{\,2\,n}
}.
\end{equation}
D'apr\`es \cite[Lemme 1.25]{Bou:compt}, pour montrer que l'hypoth\`ese \ref{hyp:rel:term:princ} est
v\'erifi\'ee, il suffit de montrer la relation
\begin{equation}\label{eq:rel:a:montrer}
\sum_{(\bbf,\bg)\in \{0,1\}^{2\,n+1}}
\mu_{X_n}^0(\bbf,\bg)\,\fact_{X_n,v}(\bbf,\bg)
 =
\sum_{(\bbf,\bg)\in \{0,1\}^{2\,n+1}}
\mu_{X_n}^0(\bbf,\bg)\,\dens_{X_n,v}(\bbf,\bg).
\end{equation}
Pour $f_0\in \{0,1\}$, on a d'apr\`es la remarque \ref{rem:inc:xn}
et la proposition \ref{prop:mu} la relation
\begin{equation}
\sum_{(\bbf,\bg)\in \{0,1\}^{2\,n}}\mu^0_{X_n}(f_0,\bbf,\bg)=0.
\end{equation}
Pour montrer la relation \eqref{eq:rel:a:montrer},
il suffit donc
de montrer pour tout $(\bbf,\bg)\in \{0,1\}^{2\,n}$ et tout $f_0\in
\{0,1\}$ la relation
\begin{multline}
\fact_{X_n,v}(f_0,f_1,\dots,f_n,\bg)-\fact_{X_n,v}(f_0,1,\dots,1,1,\dots,1)
\\
=
\dens_{X_n,v}(f_0,f_1,\dots,f_n,\bg)-\dens_{X_n,v}(f_0,1,\dots,1,1,\dots,1)
\end{multline}
et pour montrer cette derni\`ere relation il suffit par sym\'etrie de
montrer qu'on a 
\begin{multline}\label{eq:a:montrer}
\fact_{X_n,v}(f_0,0,f_2,\dots,f_n,\bg)-\fact_{X_n,v}(f_0,1,f_2\dots,f_n,\bg)
\\
=
\dens_{X_n,v}(f_0,0,f_2,\dots,f_n,\bg)-\dens_{X_n,v}(f_0,1,f_2\dots,f_n,\bg)
\end{multline}
Supposons $g_1=0$. Alors le membre de gauche de \eqref{eq:a:montrer}
vaut
\begin{equation}
(1-q_v^{-1})^n\,q_v^{\,-f_0}\,
\sum_{\substack{\bd\in \bN^{n-1}\\ d_i\geq f_i+g_i,\,\,2\leq i\leq n}}
\,q_v^{-\sumu{2\leq i\leq n}d_i}=(1-q_v^{-1})\,q_v^{\,-f_0-\sumu{2\leq
    i\leq n} f_i+g_i}
\end{equation}
tandis que le membre de droite vaut
\begin{align}
q_v^{\,(1-f_0)-2\,n}
\card{
\{\bx,\by\in \kappa_v^{(0,f_2,\dots,f_n),(0,g_2,\dots,g_n)},\,x_1\neq
0, \sum x_i\,y_i=0
\}
}
\hskip-0.4\textwidth
&
\\
&=q_v^{\,1-f_0-2\,n}\,(q_v-1)\,\card{\kappa_v^{(f_2,\dots,f_n),(g_2,\dots,g_n)}}
\\
&=q_v^{\,-f_0-2\,(n-1)}\,(q_v^{-1}-1)\,q_v^{\sumu{2\leq i \leq n}(1-f_i+1-g_i)}
\end{align}
d'o\`u l'\'egalit\'e dans ce cas-l\`a.

Supposons \`a pr\'esent qu'on a $g_1=1$. On peut \'egalement supposer qu'on
a, pour un certain entier $2\leq k\leq n$, $f_i+g_i\geq 1$ pour $2\leq
i\leq k$ et $f_i+g_i=0$ pour $i\geq k+1$.
Commen\c cons par la remarque suivante~:
posons pour $n\geq 1$ 
\begin{equation}
N_n(q_v)\eqdef\card{\{(x_1,\dots,x_n,y_1,\dots,y_n)\in \kappa_v^{2\,n},\quad
\sum_{1\leq i \leq n} x_i\,y_i=0\}}
\end{equation}
On v\'erifie aussit\^ot la relation 
\begin{equation}
N_n(q_v)=q_v\,N_{n-1}(q_v)+(q_v-1)\,q_v^{\,2\,n-2}
\end{equation}
qui entra\^\i ne par une r\'ecurrence imm\'ediate l'\'egalit\'e
\begin{equation}
\sum_{\bd\in \bN^n}q_v^{\,\Min(1,n_i)}q_v^{-\sum n_i}=\frac{q_v^{-2\,n+1}}{(1-q_v^{-1})^n}\,N_n(q_v).
\end{equation}
Le membre de gauche de \eqref{eq:a:montrer} vaut alors 
\begin{align}
(1-q_v^{-1})^n\,q_v^{\,-f_0-1}\,
\sum_{\substack{(d_2,\dots,d_n)\in \bN^{n-1}\\ d_i\geq f_i+g_i}}
q_v^{\Min(1,d_2,\dots,d_n)}\,q_v^{-\sumu{1\leq i\leq n}d_i}
\hskip-0.6\textwidth&
\\
&
=
(1-q_v^{-1})^{n-k+1}\,q_v^{\,-f_0-1-\sumu{2\leq i\leq k}(f_i+g_i)}
\sum_{\substack{(d_{k+1},\dots,d_n)\in \bN^{n-k}}}
q_v^{\,\Min(1,d_{k+1},\dots,d_n)}\,q_v^{-\sumu{k+1\leq i\leq n}d_i}
\\
&
=
(1-q_v^{-1})\,q_v^{\,-2\,(n-k)-f_0-\sumu{2\leq i\leq k}(f_i+g_i)}\,N_{n-k}(q_v)
\end{align}
et le membre de droite
\begin{multline}
q_v^{\,(1-f_0)-2\,n}
\card{
\{(\bx,\by)\in \kappa_v^{(0,f_2,\dots,f_n),(1,g_2,\dots,g_n)},
\,
x_1\neq 0,\, 
\sum_{k+1\leq i\leq n} x_i\,y_i=0
\}}
\\
=
q_v^{1-f_0-2\,n}(q_v-1)\,q_v^{\,\sumu{2\leq i\leq k}(1-f_i+1-g_i)}\,N_{n-k}(q_v)
\end{multline}
d'o\`u l'\'egalit\'e dans ce cas \'egalement.

Ceci ach\`eve la d\'emonstration du fait que  l'hypoth\`ese
\ref{hyp:rel:term:princ}
est v\'erifi\'ee pour la vari\'et\'e $X_n$, et donc la d\'emonstration du
th\'eor\`eme
\ref{thm:princ}.
\end{proof}

\begin{lemme}\label{lm:majem} 
Soit $n\geq 2$ un entier et $\bnu\in \bN^n$. 
On a pour tout $\bd\in \bN^n$ la majoration
\beq
\Min(d_i+\nu_i)\leq \abs{\bd}+\Min(\nu_i).
\eeq
\end{lemme}
\begin{proof}
On se ram\`ene aussit\^ot au cas o\`u $\Min(\nu_i)=0$, $n=2$, et $d_1+\nu_1=\Min(d_i+\nu_i)$.
Si $\nu_1=0$ la majoration est \'evidemment v\'erif\'ee.
Sinon on a $\nu_2=0$ et $d_2\geq d_1+\nu_1$ et l\`a encore la majoration
est v\'erifi\'ee.
\end{proof}

\section{Application \`a une surface de del Pezzo g\'en\'eralis\'ee}\label{sec:app:sdp}
\subsection{Une variante de la m\'ethode pr\'ec\'edente}

On consid\`ere \`a pr\'esent une l\'eg\`ere variante de la situation consid\'er\'ee
dans la section \ref{sec:hyp:int}, en supposant cette fois qu'on a un isomorphisme
\begin{equation}
\Cox(X)\longisom k[(s_i)_{i\in I},(t_j)_{j\in J}]/F(s_i,t_j)
\end{equation}
o\`u $F$ est $\Pic(X)$-homog\`ene de degr\'e $\Dtot$ et  s'\'ecrit
\begin{equation}
F=t_{j_0}^2\,\prod_{i\in I}s_i^{b_{i,j_0}}+\sum_{j\in J\setminus \{j_0\}} t_j\,\prod_{i\in I}s_i^{b_{i,j}},\quad b_{i,j}\in \bN.
\end{equation}
Par souci de simplification, on supposera en outre qu'on a
$J=\{1,2,3\}$ (en d'autres termes que $X$ est une surface).
On choisira $j_0=1$. 
On reprend alors
la strat\'egie d\'evelopp\'ee dans la section \ref{sec:hyp:int} pour aboutir
au r\'esultat suivant, que nous appliquerons ensuite \`a une surface de del Pezzo
g\'en\'eralis\'ee issue de la liste \'etablie par Derenthal dans \cite{Der:sdp:ut:hyp}.
\begin{thm}\label{thm:synth:bis}
On conserve les notations et hypoth\`eses pr\'ec\'edentes.
On suppose que les hypoth\`eses 
\ref{hyp:synth:dP6A2} ci-dessous sont satisfaites.
Alors la s\'erie
\begin{equation}
Z_{X,-\can{X},X_0}(t)-
\gamma(X)\!\!
\sum_{y\in \ceff(X)^{\vee}\cap \Pic(X)^{\vee}}\,(q\,t)^{\,\acc{y}{-\can{X}}}
\end{equation}
est $q^{-1}$-contr\^ol\'ee \`a l'ordre $\rg(\Pic(X))-1$.
\end{thm}
\begin{cor}\label{cor:appl}
On suppose que $X$ est la d\'esingularisation minimale de la surface de
del Pezzo singuli\`ere de degr\'e $6$ (respectivement $5$) avec une
singularit\'e de type $A_2$ (respectivement de type $A_3$). Alors $X$
v\'erifie la conclusion du th\'eor\`eme ci-dessus.
\end{cor}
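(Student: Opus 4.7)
Le plan est d'appliquer le th\'eor\`eme \ref{thm:synth:bis} \`a chacune des deux surfaces s\'epar\'ement. Le point de d\'epart consiste \`a extraire de la classification de Derenthal \cite{Der:sdp:ut:hyp} l'anneau de Cox explicite de chaque surface sous la forme $k[(s_i)_{i\in I},t_1,t_2,t_3]/F$ avec
\begin{equation}
F=t_1^2\,\prod_{i\in I}s_i^{b_{i,1}}+t_2\,\prod_{i\in I}s_i^{b_{i,2}}+t_3\,\prod_{i\in I}s_i^{b_{i,3}},
\end{equation}
et d'identifier les classes $\scF_i$, $\scG_j$ dans $\Pic(X)$ ainsi que les relations d'incidence des diviseurs correspondants (ces derni\`eres se lisent, via la remarque \ref{rem:incidence}, sur le diagramme de Dynkin de la singularit\'e et la configuration des courbes exceptionnelles). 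La dimension \'etant $2$, on a bien $\card{J}=3$ comme requis dans la variante pr\'ec\'edant le th\'eor\`eme~\ref{thm:synth:bis}.

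La v\'erification des hypoth\`eses \ref{hyp:synth:dP6A2} se d\'ecompose alors, par analogie avec le traitement men\'e \`a la section \ref{sec:hyp:int}, en trois blocs. D'abord, les conditions de positivit\'e (analogues de \ref{hyp:pos} et \ref{hyp:zerr2:bis})~: on choisit une num\'erotation de $J\setminus \{1\}$ puis de $\{1,2,3\}$ telle que $\scG_{j}+\scG_{j+1}-\Dtot\in \ceff(X)\setminus \{0\}$, et on v\'erifie par un calcul explicite dans $\Pic(X)$ que $\frac{1}{2}\,\sum_j \scG_j-\Dtot$ est encore dans $\ceff(X)\setminus \{0\}$~; la non vacuit\'e de $\cap_j \scG_j$ permet alors d'invoquer l'analogue de la remarque \ref{rem:hyp:inter}. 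Ensuite, les conditions combinatoires et analytiques (analogues de \ref{hyp:conv:sergen} et \ref{hyp:conv:cEeps})~: on cherche une \'ecriture factoris\'ee de $\wt{F}_{\rho,0}$ dans l'esprit de \cite[Proposition 3.2]{Bou:compt}, la seule nouveaut\'e par rapport au cas lin\'eaire \'etant le terme quadratique en $t_1$~; les majorations de degr\'e requises par les remarques \ref{rem:hyp:conv:sergen} et \ref{rem:hyp:conv:cEeps} s'en d\'eduisent, et la convergence du produit eul\'erien $\sum \abs{\mu_X(\becE)}\,c_{\eta,\becE}\,q^{\cdots}$ suit en v\'erifiant que la condition combinatoire $C_{\be}-\sum f_i-\sum g_j<-1$ est satisfaite pour tout $\be$ avec $\mu_X^0(\be)\neq 0$.

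L'obstacle principal, et le seul \`a comporter un contenu substantiel, est la derni\`ere hypoth\`ese, \`a savoir l'identit\'e locale de comptage de points (analogue de \ref{hyp:rel:term:princ}). Dans le cas purement lin\'eaire (th\'eor\`eme \ref{thm:princ}), cette identit\'e se ramenait, via l'inversion de M\"obius et les donn\'ees d'incidence, \`a une r\'ecurrence \'el\'ementaire sur le cardinal des solutions de $\sum x_i y_i=0$ sur $\kappa_v$. Ici, la pr\'esence du terme $t_1^2\,\prod s_i^{b_{i,1}}$ brise la sym\'etrie~: le cardinal local de la fibre d\'epend du caract\`ere quadratique $\chi$ de $\kappa_v^{\times}$, car l'\'equation $t_1^2\,a+b=0$ n'a de solution en $t_1$ que lorsque $-b/a$ est un carr\'e, et en a alors $1+\chi(-b/a)$. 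Le coeur de la d\'emonstration consiste \`a v\'erifier que les contributions provenant de $\chi$ se compensent dans la somme altern\'ee $\sum_{(\bbf,\bg)}\mu_{X}^0(\bbf,\bg)\,\fact_{X,v}(\bbf,\bg)$, ce qui devrait r\'esulter d'une sym\'etrie $(\bbf,\bg)\leftrightarrow (\bbf',\bg')$ propre \`a chacune des deux surfaces, combin\'ee au fait que $\sum_{\be}\mu_X^0(\be)$ s'annule sur les sous-ensembles d'indices dont les diviseurs associ\'es ont intersection non vide (proposition \ref{prop:mu}). Compte tenu de la remarque \ref{rem:eff}, cette ultime v\'erification peut \^etre ramen\'ee \`a un calcul effectif, \`a mener surface par surface.
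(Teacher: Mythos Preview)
Your overall plan (extract the explicit Cox data from \cite{Der:sdp:ut:hyp}, then verify the hypoth\`eses \ref{hyp:synth:dP6A2} block by block) matches the paper. Two points, however, are off.

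First, the positivity block. You import the symmetric conditions from the linear case (a numbering with $\scG_j+\scG_{j+1}-\Dtot\in\ceff(X)$ and $\frac{1}{2}\sum_j\scG_j-\Dtot\in\ceff(X)$), but the actual hypoth\`eses \ref{hyp:synth:dP6A2} are asymmetric in $j_0=1$: one needs $\scG_2+\scG_3-\Dtot$ and $\scG_1+\frac{1}{2}\scG_2+\frac{1}{2}\scG_3-\Dtot$ in $\ceff(X)\setminus\{0\}$, together with the series conditions of points \ref{item:hyp:synth:dP6A2:i} and \ref{item:hyp:synth:dP6A2:ii}. The paper checks these via the remarque \ref{rem:hyp:synth:dP6A2}, not via the analogue of \ref{rem:hyp:inter}. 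Likewise, for $\wt{F}_{\rho,0}$ there is no clean product factorisation \`a la \cite[Proposition~3.2]{Bou:compt}; the paper instead shows (proposition~\ref{prop:calc:sergen:dP6A2}) that $(1-\rho\,t_2^2t_3^2)(1-\rho\,t_1t_2t_3)\prod_i(1-t_i)\,G_{\rho,\bnu}$ is a polynomial in $\bt$, and bounds the $\rho$-degree of its coefficients directly via \eqref{eq:def:varphi} and the lemme \ref{lm:majem}. Your ``\'ecriture factoris\'ee dans l'esprit de \cite{Bou:compt}'' is therefore not available and you should not rely on it.

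Second, and more importantly, your analysis of the analogue of \ref{hyp:rel:term:princ} anticipates a difficulty that is not there. Both sides of the identity are polynomial in $q_v$: the right side because $X$ is a smooth projective rational surface (so $\card{X(\kappa_v)}$ is a polynomial in $q_v$), and the left side by construction of $F_{\rho,\be}$ in \eqref{eq:def:Frhoalphabeta:dP6A2}. No quadratic character appears. If one unwinds via $\dens_{X,v}$ as in the proof of the th\'eor\`eme \ref{thm:princ}, the equation $s_1t_1^2+s_2t_2+s_3t_3=0$ still has the linear variables $t_2,t_3$, so counting its $\kappa_v$-points never forces you to solve a quadratic in $t_1$; your proposed ``compensation des contributions de $\chi$'' is a red herring. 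The paper simply states that for this hypoth\`ese it ``n'a pas trouv\'e d'autre approche que le calcul par force brute'' and leaves it as an effective computation (cf.\ remarque~\ref{rem:eff}); only the $A_2$ degree~$6$ case is written out, the $A_3$ degree~$5$ case being analogous.
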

La d\'emonstration du th\'eor\`eme \ref{thm:synth:bis} est tr\`es similaire \`a celle 
du th\'eor\`eme \ref{thm:synth} d\'evelopp\'ee dans la section \ref{sec:hyp:int}. 
Nous nous contentons d'indiquer succinctement les modifications n\'ecessaires.
Le lemme de comptage de sections globales (proposition
\ref{prop:compt}) 
est remplac\'e par la proposition \ref{prop:compt:bis} ci-dessous.
On d\'eduit de cette nouvelle version la proposition
\ref{prop:maj:fonc:dP6A2}, qui remplace la proposition \ref{prop:maj:fonc:j}.
Compte tenu de cette nouvelle proposition, la s\'erie g\'en\'eratrice
$F_{\rho,\be}$ 
d\'efinie par \eqref{eq:def:Frhoalphabeta} s'\'ecrit \`a pr\'esent
\begin{multline}\label{eq:def:Frhoalphabeta:dP6A2}
F_{\rho,\be}(\bt)
\\
\eqdef
\sum_{\bd\in \bN^I}
\rho^{
\Infu{2\leq j\leq 3}
\left(
g_j+
\sum_i
  b_{i,j}(d_i+f_i)
\right)
-\lceil
\frac{1}{2}\,\,\Infu{2\leq j\leq 3}
\left(
g_j+
\sum_i
  b_{i,j}(d_i+f_i)
\right)
-
\frac{1}{2}\,\,\Infu{1\leq j\leq 3}
\left(
\eps_j\,g_j+
\sum_i b_{i,j}(d_i+f_i)
\right)
\rceil
}
\bt^{\,\bd},
\end{multline}
o\`u $\eps_1=2$, $\eps_2=\eps_3=1$, et, pour $x\in \bR$, $\lceil x
\rceil$ d\'esigne le plus petit entier
sup\'erieur \`a $x$.
On aboutit alors naturellement \`a de nouvelles hypoth\`eses suffisantes 
(hypoth\`eses \ref{hyp:synth:dP6A2})
pour mener \`a bien la m\'ethode dans ce nouveau cadre.

Pour $\ecD\in \diveffc\otimes \bR$ on note
$\lceil \ecD \rceil\eqdef \sum_v \lceil v(\ecD) \rceil\,v$.
Par une d\'emarche semblable \`a celle utilis\'ee dans la d\'emonstration de
la proposition \ref{prop:compt}, on aboutit au r\'esultat suivant.
\begin{prop}\label{prop:compt:bis}
Soient $\ecH,\ecH_1,\ecH_2,\ecH_3,\ecH'_1,\ecH'_2$ et $\ecH'_3$ des diviseurs de $\courbe$
tels que les diviseurs $\ecH$, $\ecH_1+2\,\ecH'_1$ et $\ecH_j+\ecH'_j$
pour $j=2,3$ sont deux \`a deux lin\'eairement \'equivalents.
Pour $j\in \{1,2,3\}$, soit $s_j$  une section globale non nulle de $\OdeC(\ecH_j)$.
On fixe des isomorphismes 
\begin{equation}
\OdeC(\ecH)\isom \OdeC(\ecH_1+2\,\ecH'_1),\quad
\OdeC(\ecH)\isom \OdeC(\ecH_j+\ecH'_j),
\quad j\in \{2,3\}
\end{equation} 
ce qui permet pour $j\in \{2,3\}$ de d\'efinir l'application
\begin{equation}
\varphi_{s_1,\{s_k\}_{2\leq k\leq j}}\,:\,H^0(\courbe,\OdeC(\ecH'_1))
\times \prod_{2\leq k\leq j}H^0(\courbe,\OdeC(\ecH'_k))
\longto H^0(\courbe,\OdeC(\ecH))
\end{equation}
qui \`a $(t_1,\{t_k\}_{2\leq k\leq j})$ associe $t_1^2\,s_1+\sum_{2\leq k\leq j}t_k\,s_k$.
On note 
\begin{equation}
\Delta_{s_1,\{s_k\}_{2\leq k\leq j}}\eqdef \log_q \card{\varphi_{s_1,\{s_k\}_{2\leq k\leq j}}^{-1}(\{0\})}.
\end{equation}
\begin{enumerate}
\item
On a la majoration
\beq
\Delta_{s_1,s_2}\leq 1+\frac{1}{2}\deg(\ecH'_1)+\frac{1}{2}\,\deg(\ecH'_2).
\eeq
\item 
La quantit\'e $\Delta_{s_1,s_2,s_3}$ est major\'ee soit par 
\beq
1+\deg(\ecH'_1),
\eeq
soit par 
\begin{multline}
2+
\sum_{1\leq j\leq 3}\deg(\ecH'_j)-\deg(\ecH)+\deg(\Inf(\ddiv(s_2),\ddiv(s_3)))
\\
-
\deg\left(\lceil \frac{1}{2}\Inf(\ddiv(s_2),\ddiv(s_3))-\frac{1}{2}\Inf(\ddiv(s_1),\ddiv(s_2),\ddiv(s_3))\rceil\right).
\end{multline}
\item
On suppose qu'on a
\begin{equation}
\deg(\ecH'_2)+\deg(\ecH'_3)\geq \deg(\ecH)-\deg(\Inf(\ddiv(s_2),\ddiv(s_3)))+2\,\gc-1
\end{equation}
et 
\begin{equation}
\deg(\ecH'_1)\geq 
\deg\left(\lceil \frac{1}{2}\Inf(\ddiv(s_2),\ddiv(s_3))-\frac{1}{2}\Inf(\ddiv(s_1),\ddiv(s_2),\ddiv(s_3))\rceil\right)+2\,\gc-1.
\end{equation}
Notons que ceci est en particulier v\'erifi\'e si l'on a 
\begin{equation}\label{hyp:cor:clef:part:1:dP6A2}
\deg(\ecH'_2)+\deg(\ecH'_3)-\deg(\ecH)\geq 2\,\gc-1
\end{equation}
\begin{equation}\label{hyp:cor:clef:part:2:dP6A2}
\text{et}\quad\deg(\ecH'_1)+\frac{1}{2}\,\deg(\ecH'_2)+\frac{1}{2}\,\deg(\ecH'_3)-\deg(\ecH)\geq 2\,\gc-1.
\end{equation}
Alors $\Delta_{(s_1,s_2,s_3)}$
vaut 
\begin{multline}
2\,(1-\gc)+
\sum_{1\leq j\leq 3}\deg(\ecH'_j)-\deg(\ecH)+\deg(\Inf(\ddiv(s_2),\ddiv(s_3)))
\\
-
\deg\left(\lceil \frac{1}{2}\Inf(\ddiv(s_2),\ddiv(s_3))-\frac{1}{2}\Inf(\ddiv(s_1),\ddiv(s_2),\ddiv(s_3))\rceil\right).
\end{multline}
\end{enumerate}
\end{prop}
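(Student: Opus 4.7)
The strategy adapts the proof of Proposition \ref{prop:compt}, with modifications reflecting the quadratic appearance of $t_1$ through $s_1$. The essential new ingredient is a divisibility dictionary: the condition that $t_1^2 s_1$ be divisible by a section with divisor $E$ translates, by comparing divisors at each place, into the pointwise inequality $\ddiv(t_1) \geq \lceil \frac{1}{2}(E - \Inf(E, \ddiv(s_1)))\rceil$, producing the characteristic ceiling-halving term appearing in the statement.

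For Part 1, we fix $t_1$ and observe that, since $s_2 \neq 0$, the equation $t_2 s_2 = -t_1^2 s_1$ has at most one solution in $t_2$; it has a solution precisely when $2\,\ddiv(t_1) + \ddiv(s_1) \geq \ddiv(s_2)$, i.e. $\ddiv(t_1) \geq D$, with $D \eqdef \lceil \frac{1}{2}(\ddiv(s_2) - \Inf(\ddiv(s_1),\ddiv(s_2)))\rceil$. Hence the fiber injects into $H^0(\courbe, \OdeC(\ecH'_1 - D))$, and the elementary bound $\ell \leq 1 + \deg$, together with the linear-equivalence identities $\deg(\ecH) = \deg(\ecH_1) + 2\deg(\ecH'_1) = \deg(\ecH_2) + \deg(\ecH'_2)$ and $\deg(\Inf(\ddiv(s_1),\ddiv(s_2))) \leq \deg(\ecH_1)$, delivers the claimed inequality.

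For Part 2, we decompose the fiber according to the value of $t_1$: given $t_1$, the set of valid $(t_2,t_3)$ is either empty or a coset of $\ker(\varphi^{\text{lin}}_{s_2,s_3})$, where $\varphi^{\text{lin}}_{s_2,s_3}\colon(t_2,t_3)\mapsto t_2 s_2 + t_3 s_3$. In the case where this linear kernel is trivial (the setting of point 1 of \cite[Lemme 3.5]{Bou:compt}), each $t_1$ contributes at most one pair, giving the first bound $1 + \deg(\ecH'_1)$. Otherwise, point 2 of \cite[Lemme 3.5]{Bou:compt} bounds the kernel dimension, and the divisibility argument above shows that the set of $t_1$ admitting a valid $(t_2,t_3)$ is contained in $H^0(\courbe, \OdeC(\ecH'_1 - D_3))$ with $D_3 \eqdef \lceil \frac{1}{2}(\Inf(\ddiv(s_2),\ddiv(s_3)) - \Inf(\ddiv(s_1),\ddiv(s_2),\ddiv(s_3)))\rceil$; summing the two contributions yields the second bound.

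For Part 3, the positivity hypotheses turn the above upper bounds into equalities. The first hypothesis, combined with point 3 of \cite[corollaire 3.6]{Bou:compt} applied to $(s_2,s_3)$, gives simultaneously the exact kernel dimension and the fact that the image of $\varphi^{\text{lin}}_{s_2,s_3}$ consists of all sections of $\OdeC(\ecH)$ whose divisor dominates $\Inf(\ddiv(s_2),\ddiv(s_3))$. The second hypothesis guarantees $\deg(\ecH'_1 - D_3) \geq 2\gc - 1$, so the set of valid $t_1$ is exactly $H^0(\courbe, \OdeC(\ecH'_1 - D_3))$, with dimension given by Riemann--Roch, and the claimed formula follows by summation. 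Checking that hypotheses \eqref{hyp:cor:clef:part:1:dP6A2} and \eqref{hyp:cor:clef:part:2:dP6A2} imply those of Part 3 is a routine degree manipulation, relying on $\deg(\lceil D_3 \rceil) \leq \deg(\Inf(\ddiv(s_2),\ddiv(s_3))) \leq \frac{1}{2}(\deg(\ddiv(s_2)) + \deg(\ddiv(s_3)))$. The main obstacle is the bookkeeping of the ceiling, positive-part and $\Inf$ operations at each place of $\courbe$, and verifying that they interact coherently with the degree identities arising from the linear equivalences; the characteristic of $k$ plays no essential role, since our bounds come from divisor-theoretic subspace inclusions rather than from counting square roots.
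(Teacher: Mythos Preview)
Your proposal is correct and is precisely the adaptation the paper has in mind: the paper gives no detailed proof here, merely indicating ``par une d\'emarche semblable \`a celle utilis\'ee dans la d\'emonstration de la proposition \ref{prop:compt}'', and your argument spells out exactly that analogy, the new ingredient being the ceiling-halving divisibility criterion for $t_1^2 s_1$ that you identify. Your case split in Part~2 (trivial versus non-trivial linear kernel of $\varphi^{\text{lin}}_{s_2,s_3}$, invoking respectively points~1 and~2 of \cite[Lemme 3.5]{Bou:compt}) and your use of \cite[corollaire 3.6]{Bou:compt} in Part~3 faithfully mirror the structure of the proof of Proposition~\ref{prop:compt}.
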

\begin{prop}\label{prop:maj:fonc:dP6A2}
Soit $\becE\in \diveffc^{I\cup J}$, 
 $\becD\in \diveffc^I$ et 
$y\in\Pic(X)^{\vee}\cap \ceff(X)^{\vee}$
tels que pour $i\in I$ on ait $\deg(\ecD_i)=\acc{y}{\scF_i}-\deg(\ecF_i)$.
\begin{enumerate}
\item\label{item:prop:maj:fonc:ijk:dP6A2}
Pour tout $\{i,j,k\}=\{1,2,3\}$, on  a
\begin{equation}\label{eq:maj:fonc:ijk:dP6A2}
\log_q \fonc_{\{1,2,3\}\setminus \{i\}}(\becD,\becE)\leq 1+\frac{1}{2}\,(\acc{y}{\scG_j}-\deg(\ecG_j))+\frac{1}{2}\,(\acc{y}{\scG_k}-\deg(\ecG_k)).
\end{equation}
\item\label{item:prop:maj:fonc:princ:dP6A2}
Posons $\eps_1=2$ et $\eps_2=\eps_3=1$.
La quantit\'e $\log_q \fonc_{\{1,2,3\}}(\becD,\becE)$ est major\'ee soit par
\begin{equation}\label{eq:maj:dP6A2:N123:bis}
1+\acc{y}{\scG_1}-\deg(\ecG_1),
\end{equation}
soit par
\begin{multline}
2+\acc{y}{\sum_{1\leq j\leq 3}\scG_j-\Dtot}-\sum_{1\leq j\leq 3} \deg(\ecG_j)
+
\deg(\Infu{2\leq j\leq 3}(
\sumu{i\in I} b_{i,j} (\ecF_i+\ecD_i)+\ecG_j
))
\\
-
\deg\left(\lceil \frac{1}{2}
\Infu{2\leq j\leq 3}(
\sumu{i\in I} b_{i,j} (\ecF_i+\ecD_i)+\ecG_j
)-\frac{1}{2}
\Infu{1\leq j\leq 3}(
\sumu{i\in I} b_{i,j} (\ecF_i+\ecD_i)+\eps_j\,\ecG_j
)\rceil\right).
\end{multline}
\item
On suppose qu'on a 
\begin{equation}
\acc{y}{\scG_2+\scG_{3}-\Dtot}
\geq 
\deg(\ecG_2)+\deg(\ecG_{3})
+2\,\gc-1
\end{equation}
\begin{equation}
\text{et}\quad\acc{y}{\scG_1+\frac{1}{2}\,\scG_{2}+\frac{1}{2}\,\scG_3-\Dtot}
\geq 
\deg(\ecG_1)+\frac{1}{2}\,\deg(\ecG_{2})+\frac{1}{2}\,\deg(\ecG_3)
+2\,\gc-1.
\end{equation} 
Alors la quantit\'e $\log_q \fonc_{\{1,2,3\}}(\becD,\becE)$ vaut
\begin{multline}
2\,(1-\gc)+\acc{y}{\sum_{j\in \{1,2,3\}}\scG_j-\Dtot}-\sum_{j\in \{1,2,3\}} \deg(\ecG_j)
+
\deg(\Infu{2\leq j\leq 3}(
\sumu{i\in I} b_{i,j} (\ecF_i+\ecD_i)+\ecG_j
))
\\
-
\deg\left(\lceil \frac{1}{2}
\Infu{2\leq j\leq 3}(
\sumu{i\in I} b_{i,j} (\ecF_i+\ecD_i)+\ecG_j
)-\frac{1}{2}
\Infu{1\leq j\leq 3}(
\sumu{i\in I} b_{i,j} (\ecF_i+\ecD_i)+\eps_j\,\ecG_j
)\rceil\right).
\end{multline}
\end{enumerate}
\end{prop}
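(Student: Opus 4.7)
The plan is to apply Proposition \ref{prop:compt:bis} — together with Proposition \ref{prop:compt} for one auxiliary subcase — to the sections
$$s_j = \scan{\ecG_j}^{\eps_j} \prod_{i \in I} (\scan{\ecD_i}\scan{\ecF_i})^{b_{i,j}}, \qquad j \in \{1,2,3\},$$
with $\eps_1 = 2$ and $\eps_2 = \eps_3 = 1$. These are exactly the coefficients of $t_1^2$, $t_2$, $t_3$ in the specialized equation $F(\scan{\ecD_i}\scan{\ecF_i},\,t_j\,\scan{\ecG_j}) = 0$ that defines $\fonc_K(\becD,\becE)$, so that $\ecH_j := \eps_j\,\ecG_j + \sum_i b_{i,j}(\ecD_i + \ecF_i)$. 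Setting $\ecH'_j := -\ecG_j + \sum_i a_{i,j}(\ecD_i + \ecF_i)$ (the divisor class of which $t_j$ is a global section), the $\Pic(X)$-homogeneity of $F$ combined with the expansions $\scG_j = \sum_i a_{i,j}\scF_i$ forces the linear equivalences $\ecH_1 + 2\ecH'_1 \sim \ecH_j + \ecH'_j$ for $j \in \{2,3\}$, so a common $\ecH$ exists, and one reads off $\deg \ecH = \acc{y}{\Dtot}$ and $\deg \ecH'_j = \acc{y}{\scG_j} - \deg \ecG_j$.

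Item (1) splits by the omitted index. If $i=1$ the relation involves only $(t_2,t_3)$ linearly, so Proposition \ref{prop:compt}(1) with $n=2$ produces the bound $1 + \tfrac{1}{2}(\deg \ecH'_2 + \deg \ecH'_3)$, which is the claim. For $i \in \{2,3\}$ the relation reads $t_1^2 s_1 + t_k s_k = 0$ and Proposition \ref{prop:compt:bis}(1) gives $\Delta_{s_1,s_k} \leq 1 + \tfrac{1}{2}\deg \ecH'_1 + \tfrac{1}{2}\deg \ecH'_k$, again equivalent to the stated bound after substitution.

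Items (2) and (3) come from applying respectively parts (2) and (3) of Proposition \ref{prop:compt:bis} to the full triple $(s_1,s_2,s_3)$. The first bound in (2) is simply $1 + \deg \ecH'_1 = 1 + \acc{y}{\scG_1} - \deg \ecG_1$; the second reduces to the announced expression using
$$\sum_j \deg \ecH'_j - \deg \ecH = \acc{y}{\sum_j \scG_j - \Dtot} - \sum_j \deg \ecG_j$$
together with the identity $\ddiv(s_j) = \eps_j\,\ecG_j + \sum_i b_{i,j}(\ecD_i + \ecF_i)$ read inside the $\Inf$ and ceiling terms. For item (3) the two positivity assumptions on $\acc{y}{\scG_2+\scG_3-\Dtot}$ and on $\acc{y}{\scG_1+\tfrac{1}{2}\scG_2+\tfrac{1}{2}\scG_3-\Dtot}$ transcribe after the same substitutions into precisely the ``en particulier'' inequalities \eqref{hyp:cor:clef:part:1:dP6A2} and \eqref{hyp:cor:clef:part:2:dP6A2} appearing in Proposition \ref{prop:compt:bis}(3), so that part (3) of the counting lemma produces the desired equality. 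No genuine obstacle is expected; the argument is a direct transcription of the counting lemma into the present notation, and the only point demanding real care is the bookkeeping of the various linear equivalences — in particular the quadratic identification $\ecH \sim \ecH_1 + 2\ecH'_1$ — which is imposed by the $\Pic(X)$-homogeneity of $F$.
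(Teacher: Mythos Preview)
Your proof is correct and follows exactly the approach the paper intends: just as Proposition~\ref{prop:maj:fonc:j} is obtained by applying Proposition~\ref{prop:compt} with the substitutions $\ecH'_j=-\ecG_j+\sum_i a_{i,j}(\ecD_i+\ecF_i)$, $\ecH_j=\ecG_j+\sum_i b_{i,j}(\ecD_i+\ecF_i)$, $s_j=\scan{\ecG_j}\prod_i(\scan{\ecD_i}\scan{\ecF_i})^{b_{i,j}}$, here one applies Proposition~\ref{prop:compt:bis} with the quadratic modification $s_1=\scan{\ecG_1}^{2}\prod_i(\scan{\ecD_i}\scan{\ecF_i})^{b_{i,1}}$, and your treatment of the three subcases in item~(1) (including the appeal to Proposition~\ref{prop:compt} for the purely linear case $i=1$) is the natural way to fill in the details the paper omits.
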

Compte tenu de cette proposition, des hypoth\`eses 
suffisantes pour mener \`a bien la m\'ethode s'\'enoncent \`a pr\'esent ainsi.
\begin{hyps}\label{hyp:synth:dP6A2}
\begin{enumerate}
\item\label{item:hyp:synth:dP6A2:0}
On a
\begin{equation}
\scG_{2}+\scG_{3}-\Dtot\in \ceff(X)\setminus \{0\}
\end{equation}
\begin{equation}\label{eq:cond:synth:dP6A2}
\text{et}\quad \scG_{1}+\frac{1}{2}\,\scG_{2}+\frac{1}{2}\,\scG_3-\Dtot\in \ceff(X)\setminus \{0\}.
\end{equation}
\item\label{item:hyp:synth:dP6A2:i}
Pour $\{i,j,k\}=\{1,2,3\}$, la s\'erie
$Z(\frac{1}{2}\,\scG_i+\frac{1}{2}\,\scG_j,(0,\frac{1}{2},\frac{1}{2})$ (\cf
l'\'enonc\'e du lemme \ref{lm:muxconv}) est
$q^{-1}$-contr\^ol\'ee \`a l'ordre $\rg(\Pic(X))-1$.
\item\label{item:hyp:synth:dP6A2:ii}
La s\'erie $Z(\scG_1,(1,0,0))$
est
$q^{-1}$-contr\^ol\'ee \`a l'ordre $\rg(\Pic(X))-1$.
\item\label{item:hyp:synth:dP6A2:0:0}
L'analogue des hypoth\`eses \ref{hyp:conv:cEeps} et
\ref{hyp:rel:term:princ} pour la nouvelle forme 
\eqref{eq:def:Frhoalphabeta:dP6A2} de $F_{\rho,\be}$
est v\'erifi\'e.
\end{enumerate}
\end{hyps}
Bien entendu, le point \ref{item:hyp:synth:dP6A2:i} sert \`a contr\^oler
les termes d'erreur issus de la majoration \eqref{eq:maj:dP6A2:N123:bis} et le point
\ref{item:hyp:synth:dP6A2:ii} ceux issus de la majoration  \eqref{eq:maj:fonc:ijk:dP6A2}.
\begin{rem}\label{rem:hyp:synth:dP6A2}
On suppose que l'intersection $\scG_1\cap \scG_2\cap \scG_3$ est non
vide. On d\'eduit alors de la remarque \eqref{rem:inter:bis} les faits
suivants~:
\begin{enumerate}
\item
on suppose que pour $\{i,j,k\}=\{1,2,3\}$
il existe $\eps>0$ 
tel qu'on ait
\begin{equation}
(1-\eps)\,\scG_i+\frac{1}{2}\,\scG_j+\frac{1}{2}\,\scG_k
-\Dtot\in \ceff(X)\setminus \{0\}\quad;
\end{equation}
alors le point \ref{item:hyp:synth:dP6A2:i}
 des hypoth\`eses \ref{hyp:synth:dP6A2} est v\'erifi\'e~;
\item
on suppose qu'il existe $\eps>0$ tel qu'on ait
\begin{equation}
(1-\eps)\,\scG_2+(1-\eps)\,\scG_3-\Dtot\in \ceff(X)\setminus\{0\}\quad;
\end{equation}
alors le point \ref{item:hyp:synth:dP6A2:ii}
 des hypoth\`eses \ref{hyp:synth:dP6A2} est v\'erifi\'e.
\end{enumerate}
\end{rem}
\begin{rem}
Il est possible, \`a tr\`es peu de frais, d'am\'eliorer tr\`es l\'eg\`erement la proposition
\ref{prop:compt:bis}, ce qui conduit \`a affaiblir un peu les
hypoth\`eses ci-dessus. Une remarque similaire vaut pour la proposition
\ref{prop:compt}. Nous ne donnons pas ces \'enonc\'es raffin\'es d'une part pour ne
pas alourdir l'\'ecriture, d'autre part car ils sont inutiles pour les
applications propos\'ees.
Par exemple, la condition
\eqref{hyp:cor:clef:part:2:dP6A2} peut-\^etre affaiblie en demandant seulement l'existence
d'un $\alpha\in [0,1]$ v\'erifiant 
\begin{equation}
\deg(\ecH'_1)+\alpha\,\deg(\ecH'_2)+(1-\alpha)\,\deg(\ecH'_3)-\deg(\ecH)\geq 2\,\gc-1.
\end{equation}
ce qui permet de remplacer la condition \eqref{eq:cond:synth:dP6A2}
par la condition affaiblie
\beq
\exists\alpha\in [0,1],\quad \scG_{1}+\alpha\,\scG_{2}+(1-\alpha)\,\scG_3-\Dtot\in \ceff(X)\setminus \{0\}.
\eeq
\end{rem}

\subsection{V\'erification des hypoth\`eses dans le cas o\`u $X$ est la
  d\'esingularisation de la surface de del Pezzo singuli\`ere de degr\'e $6$
  avec singularit\'e $A_2$}
D'apr\`es \cite{Der:sdp:ut:hyp}, on se trouve dans le cas de figure
consid\'er\'e au d\'ebut de la section \ref{sec:app:sdp}. Plus pr\'ecis\'ement,
on a alors un isomorphisme
\begin{equation}
\Cox(X)\longisom k[(s_i)_{0\leq i \leq 3},(t_j)_{1\leq j \leq 3}]/
s_1\,t_1^2+s_2\,t_2+s_3\,t_3
\end{equation}
et les relations
\begin{align}
\scG_1&=\scF_0+\scF_2+\scF_3,\\
\scG_2&=2\,\scF_0+\scF_1+\scF_2+2\,\scF_3,\\
\text{et}\quad\quad\scG_3&=2\,\scF_0+\scF_1+2\,\scF_2+\scF_3.
\end{align}
Par ailleurs \cite{Der:sdp:ut:hyp} d\'ecrit enti\`erement les relations
d'incidence des diviseurs $\scF_i$ et $\scG_j$. En particulier on sait
que l'intersection $\scG_1\cap\scG_2\cap\scG_3$ est non vide.

La remarque \ref{rem:hyp:synth:dP6A2} et un calcul imm\'ediat montrent
alors que les points 
\ref{item:hyp:synth:dP6A2:0},
\ref{item:hyp:synth:dP6A2:i}
et \ref{item:hyp:synth:dP6A2:ii} des hypoth\`eses \ref{hyp:synth:dP6A2} sont v\'erifi\'ees.

Montrons \`a pr\'esent que les hypoth\`eses \ref{hyp:conv:cEeps} et
\ref{hyp:rel:term:princ} sont v\'erifi\'ees.
On a ici
\beq
F_{\rho,\be}(\bt)
=
G_{\rho,(f_1+2\,g_1,f_2+g_2,f_3+g_3)}(\bt)
\eeq
o\`u, pour $\bnu\in \bN^{\,3}$,
\begin{equation}
G_{\rho,\bnu}(\bt)
\eqdef
\sum_{\bd\in \bN^{\,\{0,1,2,3\}}}
\rho^{
\Min(d_2+\nu_2,d_3+\nu_3)
-\lceil
\frac{1}{2}(
\Min(d_2+\nu_2,d_3+\nu_3)
-
\Min(d_1+\nu_1,d_2+\nu_2,d_3+\nu_3)
)
\rceil
} \bt^{\,\bd}.
\end{equation}

Comme on l'a signal\'e dans la remarque \ref{rem:eff}, la v\'erification des hypoth\`eses \ref{hyp:conv:cEeps} et
\ref{hyp:rel:term:princ} peut \^etre confi\'ee \`a un logiciel de calcul formel.
Nous expliquons cependant ci-dessous comment l'hypoth\`ese
\ref{hyp:conv:cEeps} peut s'\'etablir via un minimum de calcul. En ce
qui concerne l'hypoth\`ese \ref{hyp:rel:term:princ}, nous n'avons pas
trouv\'e d'autre approche que le calcul par force brute. 

\begin{prop}\label{prop:calc:sergen:dP6A2}
Posons
\begin{equation}
\wt{G}_{\rho,\bnu}(\bt)
\eqdef
(1-\rho\,t_2^{\,2}\,t_3^{\,2})\,(1-\rho\,t_1\,t_2\,t_3)
\prod_{0\leq i\leq 3} (1-t_i)\,G_{\rho,\bnu}(\bt)\eqdef\sum_{\bd\in \bN^{\{1,2,3\}}}a_{\bnu,\bd}(\rho)\,\bt^{\bd}.
\end{equation}

Si {\em l'une} des conditions
\begin{align}
d_1&\geq 5+\abs{\nu_2-\nu_1}+\abs{\nu_3-\nu_1}\\
d_2&\geq 7+\abs{\nu_1-\nu_2}+\abs{\nu_3-\nu_2}\\
d_3&\geq 7+\abs{\nu_1-\nu_3}+\abs{\nu_2-\nu_3}
\end{align}
est satisfaite, $a_{\bnu,\bd}(\rho)$ est nul. En particulier, $\wt{G}_{\nu,\rho}(\bt)$ est un polyn\^ome.
\end{prop}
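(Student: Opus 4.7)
The plan is to decompose the sum defining $G_{\rho,\bnu}(\bt)$ into four regions according to which of $A := d_1+\nu_1$, $B := d_2+\nu_2$, $C := d_3+\nu_3$ realizes $\min(A,B,C)$. An elementary case check shows
\[
E(A,B,C) = \begin{cases} \min(B,C) & \text{si } A \ge \min(B,C), \\ \lfloor (A+\min(B,C))/2 \rfloor & \text{si } A < \min(B,C). \end{cases}
\]
Since $E$ ne d\'epend pas de $d_0$, the factor $(1-t_0)$ simply telescopes the $t_0$-sum, reducing matters to the three-variable series $H_{\rho,\bnu}(t_1,t_2,t_3) := \sum_{(d_1,d_2,d_3) \in \bN^3} \rho^{E(d_1+\nu_1,d_2+\nu_2,d_3+\nu_3)}\, t_1^{d_1} t_2^{d_2} t_3^{d_3}$.

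I would split $\bN^3$ into Region I.a ($A \le B \le C$, where $E = \lfloor(A+B)/2\rfloor$), Region I.b ($A \le C < B$, where $E = \lfloor(A+C)/2\rfloor$), Region II ($A > B$, $B \le C$, where $E = B$), and Region III ($A > C$, $B > C$, where $E = C$). In each region, a triangular change of variable such as $d_2 = d_1+\nu_1-\nu_2+e_2$, $d_3 = d_2+\nu_2-\nu_3+e_3$ (with boundary corrections of size bounded by $|\nu_i-\nu_j|$ accounting for cases where those shifts would produce negative indices) reduces the sum to a product of geometric series. Regions II and III rely only on the shift invariance $E(A+1,B+1,C+1) = E(A,B,C)+1$ and thus produce denominators dividing $(1-t_1)(1-t_2)(1-t_3)(1-\rho t_1 t_2 t_3)$. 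Regions I.a and I.b invoke the elementary identity $\sum_{e \ge 0} \rho^{\lfloor e/2\rfloor} z^e = (1+z)/(1-\rho z^2)$ with $z = t_2 t_3$, producing the additional denominator $(1-\rho t_2^2 t_3^2)$, which is symmetric in $t_2, t_3$ and hence common to both subregions.

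Summing the four contributions shows that $H_{\rho,\bnu}$ is a rational function whose denominator divides $(1-t_1)(1-t_2)(1-t_3)(1-\rho t_1 t_2 t_3)(1-\rho t_2^2 t_3^2)$, so that $\wt{G}_{\rho,\bnu}(\bt)$ is a polynomial. The explicit degree bounds follow by tracking the size of the boundary corrections in each change of variable, which depend linearly on the $|\nu_i - \nu_j|$, together with the small additive constants contributed by the clearing factors (total degree $2$ in $t_1$ from $(1-t_1)(1-\rho t_1 t_2 t_3)$, and degree $4$ in each of $t_2, t_3$ from $(1-t_j)(1-\rho t_1 t_2 t_3)(1-\rho t_2^2 t_3^2)$, plus a contribution of size $1$ from the numerator $(1+t_2 t_3)$).

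The main obstacle is the tedious but elementary bookkeeping required to pin down the precise constants $5$ and $7$: one must distinguish several sub-cases (for instance $\nu_1 \le \nu_2$ versus $\nu_1 > \nu_2$) when identifying the effective lower bounds of summation, and then verify that the superposition of the polynomial numerators from all four regions does not exceed the stated degrees in each variable. As noted in Remark \ref{rem:eff}, this final numerical verification is most efficiently delegated to a computer algebra system, which simultaneously confirms the compatibility of the four rational pieces.
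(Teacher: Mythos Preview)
Your approach is genuinely different from the paper's, and it is a natural one: split $\bN^3$ according to which argument realises $\min(A,B,C)$, compute each region as a product of geometric series after a triangular change of variable, and read off the common denominator. This does establish that $\wt{G}_{\rho,\bnu}$ is a polynomial, which is the main qualitative conclusion.

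However, the proposition asks for the \emph{explicit} bounds $5$ and $7$, uniformly in $\bnu\in\bN^3$, and this is where your sketch stops short. Your degree accounting (``degree $2$ in $t_1$ from the clearing factors, degree $4$ in $t_2,t_3$, plus $1$ from $(1+t_2t_3)$, plus boundary corrections linear in $|\nu_i-\nu_j|$'') is too coarse to land on the precise constants, and your appeal to a computer algebra system is problematic: the statement must hold for all $\bnu$, not finitely many, so you would first have to reduce to a finite symbolic verification (for instance by showing everything depends only on the signs and sizes of the $\nu_i-\nu_j$), and you do not make that reduction explicit. The reference to Remark~\ref{rem:eff} is also slightly off --- that remark concerns the verification of the later hypotheses for a fixed variety, not the uniform degree bound here.

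The paper avoids all of this bookkeeping by a different and shorter trick. Rather than computing $G_{\rho,\bnu}$ as a rational function, it expands the five clearing factors to write $a_{\bnu,\bd}(\rho)$ directly as an alternating sum
\[
a_{\bnu,\bd}(\rho)=\sum_{\substack{\bmu\in\{0,1\}^3,\ \bgamma\in\{0,1\}^2\\ \scP(\bd,\bmu,\bgamma)}}(-1)^{|\bmu|+|\bgamma|}\rho^{\varphi(\bd,\bnu,\bmu,\gamma_2)},
\]
where $\scP$ records the shifts coming from each factor and $\varphi$ is the exponent of $\rho$ evaluated at the shifted arguments. The point is that $\varphi$ does not depend on $\gamma_1$. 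Thus if $d_1,d_2,d_3$ are all large enough that $\scP$ is automatically satisfied, the sum over $\gamma_1$ already gives $0$. In the remaining case (say $d_2\le 3$), the hypothesis $d_1\ge 5+|\nu_2-\nu_1|+|\nu_3-\nu_1|$ forces $d_1+\nu_1-\mu_1\ge d_2+\nu_2-\mu_2-2\gamma_2$ for every choice of $(\bmu,\bgamma)$, so the minimum in $\varphi$ is never realised by the $d_1$-term and $\varphi$ becomes independent of $\mu_1$; since $\scP$ is also independent of $\mu_1$ once $d_1\ge 2$, summing over $\mu_1$ gives $0$. The constants $5$ and $7$ drop out immediately from the size of the shifts ($\mu_i\le1$, $\gamma_1\le1$, $2\gamma_2\le2$). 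Your region decomposition buys an explicit rational expression for $G_{\rho,\bnu}$, which may be useful elsewhere, but for the stated bounds the paper's cancellation argument is both cleaner and complete.
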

\begin{proof}
Pour $\bd\in \bN^3$, $\bmu\in \{0,1\}^3$ et $\bgamma\in \{0,1\}^2$
notons $\scP(\bd,\bmu,\bgamma)$ la condition
\beq
\left\{
\begin{array}{rrrr}
\mu_1+&\gamma_1\phantom{+}&&\leq d_1\\
\mu_2+&\gamma_1+&2\,\gamma_2&\leq d_2\\
\mu_3+&\gamma_1+&2\,\gamma_2&\leq d_3
\end{array}
\right.
\eeq
et
\begin{multline}\label{eq:def:varphi}
\varphi(\bd,\bnu,\bmu,\gamma_2)
\eqdef
\Min(d_2+\nu_2-\mu_2,d_3+\nu_3-\mu_3)
\\
-\lceil
\frac{1}{2}(
\Min(d_2+\nu_2-\mu_2,d_3+\nu_3-\mu_3)
-
\Min(d_1+\nu_1-\mu_1,d_2+\nu_2-\mu_2-2\,\gamma_2,d_3+\nu_3-\mu_3-2\,\gamma_2)
)
\rceil
\end{multline}
Un calcul \'el\'ementaire montre
que le coefficient $a_{\bnu,\bd}(\rho)$ vaut
\begin{equation}\label{eq:coef:anubd}
\sum_{\substack{
\bgamma\in \{0,1\}^2,\,\bmu\in \{0,1\}^3\\
\scP(\bd,\bmu,\bgamma)
}}
(-1)^{\sum \gamma_i+\sum \mu_i}
\rho^{\varphi(\bd,\bnu,\bmu,\gamma_2)}.
\end{equation}
Supposons qu'on ait $d_1\geq 5+\abs{\nu_2-\nu_1}+\abs{\nu_1-\nu_3}$. 
En particulier la condition $\scP(\bd,\bmu,\bgamma)$ ne d\'epend plus de $\mu_1$.
Si on a en outre $d_2\geq 4$ et $d_3\geq 4$, la condition
$\scP(\bd,\bmu,\bgamma)$ ne d\'epend plus de $\gamma_1$, et comme
$\varphi$ ne d\'epend pas non plus de $\gamma_1$,
l'expression \eqref{eq:coef:anubd} montre aussit\^ot que $a_{\nu,\bd}$ est nul. On
peut donc supposer $d_2\leq 3$. Mais alors l'hypoth\`ese sur $d_1$ entra\^\i ne aussit\^ot qu'on a
pour tout $(\bmu,\bgamma)\in \{0,1\}^5$
\beq
d_1+\nu_1-\mu_1\geq d_2+\nu_2-\mu_2-2\,\gamma_2.
\eeq
Ainsi $\varphi(\bd,\bnu,\bmu,\gamma_2)$ ne d\'epend pas de $\mu_1$, et
l'expression \eqref{eq:coef:anubd} montre aussit\^ot que $a_{\bnu,\bd}$ est nul.
Les autres cas se traitent de mani\`ere similaire.
\end{proof}
D'apr\`es \eqref{eq:coef:anubd}
et \eqref{eq:def:varphi},
le degr\'e en $\rho$ de $a_{\bnu,\bd}(\rho)$ est major\'e
par $\Min(\nu_2+d_2,\nu_3+d_3)$. 
Or, si $(d_1,d_2,d_3)$ est non nul, on v\'erifie aussit\^ot que la majoration
\beq
\Min(d_2,d_3)<d_1+d_2+d_3-1 
\eeq
vaut sauf si on a 
\beq
(d_1,d_2,d_3)\in \{(0,0,1),(0,1,0),(0,1,1),(1,0,0)\}
\eeq
et dans ce cas l'expression \eqref{eq:coef:anubd}  montre 
facilement que $a_{0,\bd}$ est nul.
Ainsi l'hypoth\`ese \ref{hyp:conv:sergen} est v\'erifi\'ee.

Par ailleurs le lemme \ref{lm:majem} montre qu'on a pour tout $\bnu$
et tout $\bd$ la majoration
\beq\label{eq:majdeg}
\deg(a_{\bnu,\bd}(\rho))\leq d_1+d_2+d_3+\Min(\nu_2,\nu_3).
\eeq
Si $\bnu=(0,1,1)$, toujours d'apr\`es \eqref{eq:coef:anubd}
et \eqref{eq:def:varphi}, on a 
\beq
\deg(a_{(0,1,1),\bd}(\rho))\leq \frac{1}{2}+\frac{1}{2}\Min(d_2,d_3)+\frac{1}{2}\Min(d_1,d_2+1,d_3+1)
\eeq
et donc la majoration ci-dessus peut-\^etre am\'elior\'ee
en 
\beq\label{eq:majdegbis}
\deg(a_{(0,1,1),\bd}(\rho))\leq d_1+d_2+d_3+\frac{1}{2}.
\eeq

La remarque \ref{rem:hyp:conv:cEeps} et \eqref{eq:majdeg}
montrent qu'il suffit pour v\'erifier l'hypoth\`ese \ref{hyp:conv:cEeps} 
d'\'etablir que si $\be\in \{0,1\}^7$ est non nul et tel que $\mu_{X}^0(\be)$ est non nul on a 
\begin{equation}
-\sum_{1\leq j\leq 3} g_j-\sum_{0\leq i\leq 3} f_i+\min(f_2+g_2,f_3+g_3) < -1.
\end{equation}
On v\'erifie aussit\^ot que ceci vaut sauf dans le cas o\`u $(f_0,f_1,g_1)=(0,0,0)$
et $(f_2+g_2,f_3+g_3)=(1,1)$. Mais alors \eqref{eq:majdegbis}
montre qu'il suffit en fait d'avoir
\begin{equation}
-\sum_{1\leq j\leq 3} g_j-\sum_{0\leq i\leq 3} f_i+\frac{1}{2}\min(f_2+g_2,f_3+g_3) < -1.
\end{equation}

\bibliographystyle{amsalpha}

\providecommand{\bysame}{\leavevmode\hbox to3em{\hrulefill}\thinspace}
\providecommand{\MR}{\relax\ifhmode\unskip\space\fi MR }
\providecommand{\MRhref}[2]{
  \href{http://www.ams.org/mathscinet-getitem?mr=#1}{#2}
}
\providecommand{\href}[2]{#2}

\end{document}